\theoremstyle{plain}
\newtheorem{theorem}{Theorem}[section]
\newtheorem{proposition}[theorem]{Proposition}
\newtheorem{lemma}[theorem]{Lemma}
\newtheorem{corollary}[theorem]{Corollary}
\theoremstyle{definition}
\newtheorem{definition}[theorem]{Definition}
\newtheorem{assumption}[theorem]{Assumption}
\theoremstyle{remark}
\newtheorem{remark}[theorem]{Remark}
\newtheorem*{remark*}{Remark}
\newtheorem{remarks}[theorem]{Remarks}
\newtheorem*{remarks*}{Remarks}
\numberwithin{equation}{section}
\title{Resolvent expansion for the Schr\"odinger operator on a graph with infinite rays}
\author{Kenichi {\scshape Ito}\footnote{Graduate School of Mathematical Sciences, University of Tokyo,
3-8-1 Komaba, Meguro-ku, Tokyo 153-8914, Japan.
E-mail: \texttt{ito@ms.u-tokyo.ac.jp}. 
}
\ \& 
Arne {\scshape Jensen}\footnote{Department of Mathematical Sciences,
Aalborg University, Skjernvej 4A, DK-9220 Aalborg \O{}, Denmark.
E-mail: \texttt{matarne@math.aau.dk}. 
}}
\date{}
\begin{document}
\allowdisplaybreaks
\maketitle

\begin{abstract}
We consider the Schr\"odinger operator on 
a combinatorial graph consisting of a finite graph 
and a finite number of discrete half-lines,
all jointed together,
and compute an asymptotic expansion of its resolvent 
around the threshold $0$. 
Precise expressions are obtained for the 
first few coefficients of the expansion in terms of the generalized eigenfunctions. 
This result justifies the classification of threshold types solely
by growth properties of the generalized eigenfunctions. 
By choosing an appropriate free operator a priori possessing 
no zero eigenvalue or zero resonance 
we can simplify the expansion procedure as 
much as that on the single discrete half-line. 
\end{abstract}

\medskip
\noindent
\textit{Keywords}: Schr\"odinger operator; Threshold; Resonance; Generalized eigenfunction; Resolvent expansion; Combinatorial graph

\medskip

\tableofcontents

\section{Introduction}

The purpose of this paper is to compute an asymptotic expansion 
of the resolvent around the threshold $0$ for the 
discrete Schr\"odinger operator 
\begin{align}
H=-\Delta_G +V
\label{17072916}
\end{align}
on an infinite, undirected and simple graph.
Here we denote the set of vertices
by $G$, and the set of edges by $E_G$,
hence we are considering the graph $(G,E_G)$.
We sometimes call it simply the graph $G$.
For any function $u\colon G\to \mathbb C$ the Laplacian $-\Delta_G$ is defined as
\begin{align}
(-\Delta_G u)[x]
=\sum_{y\sim x} (u[x]-u[y]),
\label{17072919}
\end{align}
where for any two vertices $x,y \in G$ we say $x\sim y$ if $\{x,y\}\in E_G$.
We assume that the graph $G$ consists of a finite graph $K$
and a finite number of discrete half-lines $L_\alpha$, $\alpha=1,\dots,N$, jointed together.
Special cases are the discrete full line $\mathbb Z$ 
and the discrete half-line $\mathbb N$, considered in \cite{IJ1} and \cite{IJ2}, respectively.
The perturbation $V$ can be a general non-local operator,
which is assumed to decay at infinity in an appropriate sense.

The main results in this paper give a complete description 
in terms of growth properties of the generalized eigenfunctions
for the first few coefficients of the resolvent expansion:
\begin{align*}
(H+\kappa^2)^{-1}
=\kappa^{-2}G_{-2}+\kappa^{-1}G_{-1}+G_0+\kappa G_1+\cdots.
\end{align*}
More precisely, we prove that $G_{-2}$ is the \textit{bound projection} or 
the projection onto the bound eigenspace, 
and that $G_{-1}$ is the \textit{resonance projection} or 
the projection onto the resonance eigenspace. 
Explicit expressions for $G_0$ and $G_1$ are also computed.
It is well known that the coefficients $G_{-2}$ and $G_{-1}$ directly affect the 
local decay rate of the Schr\"odinger propagator 
$e^{-itH}$ as $t\to\pm\infty$, see \cite{JK}. 
Hence our results reveal 
a relationship between the growth rates of the generalized eigenfunctions in space
and the local decay rate of $e^{-itH}$ in time,
justifying a classification of threshold types
solely by the structure of the generalized eigenspace.

There exists a large literature on threshold resolvent expansions for Schr\"{o}dinger operators. However, a complete analysis taking into account all possible generalized threshold  eigenfunctions has been obtained only recently. The first one is in \cite{IJ1} on the discrete full line $\mathbb Z$ and more recently on the discrete half-line $\mathbb N$ in \cite{IJ2}.
In these papers the authors implement the expansion scheme of \cite{JN,JNerr} in its full generality.

This paper is a  generalization of \cite{IJ1,IJ2} 
to a graph with infinite rays. 
The strategy is essentially the same as before.
However, in this paper, based on ideas from \cite{CJN} and \cite{IJ2}, 
we set up a free operator a priori possessing no zero eigenvalue or zero resonance,
and this effectively simplifies the expansion procedure to the one in \cite{IJ2}. 
Actually, with simpler arguments, we obtain 
a more precise description for the coefficients than in \cite{IJ1}.

There is a large literature on spectral theory for graphs, both combinatorial graphs and metric graphs. Most of the literature on spectral theory for combinatorial graphs focuses on finite graphs, see for example~\cite{CdV,BH}. Combinatorial graphs with infinite rays have been studied in \cite{Go,LN}. Here one-dimensional methods for discrete Schr\"{o}dinger operators can be applied. For analogous metric graphs, i.e. differential operators on the edges with appropriate boundary conditions at the vertices, the direct and inverse spectral problems have been studied by many authors. Early work includes \cite{Ge,GP}.  See \cite{ES} for a physically motivated introduction. There are extensive references in the recent monograph~\cite{BK} on metric graphs, also called quantum graphs.

The first author is directly inspired by the papers \cite{MT,FI} 
to work on the structures considered here. See the references therein for  recent development in
this topic.

This paper is organized as follows: 
In Section~\ref{170805} we introduce and study a free operator. 
Since the free operator chosen here does not have a zero eigenvalue or a zero resonance, 
its resolvent expansion is free of singular parts. 
In Section~\ref{1707291915} we formulate threshold types for a perturbed operator 
 using only the structure of the generalized eigenspace,
and the main results of the paper 
are presented. 
Section~\ref{12.12.19.2.5} is devoted to a detailed analysis of generalized eigenfunctions. 
In particular, characterizations of eigenfunctions are given as well as their precise asymptotics.
We will prove the main theorems in Section~\ref{1608216},
separating cases according to threshold types.
In Appendix~\ref{17080222} we exhibit examples of particular interest. 
We compute $G_0$ and $G_1$ in Appendix~\ref{170818}. 
The results in Appendix~\ref{170818} are to be considered as  part of the main results of the paper,
however, their proofs are extremely long, hence we separate them from Section~\ref{1707291915}.
Appendix~\ref{17081223} presents a modified version of \cite[Lemma~4.6]{IJ1},
which is needed in Section~\ref{1608216} but does not fit the context there.

\section{The free operator}\label{170805}

In this section we define a graph $(G,E_G)$ with a finite number of 
infinite rays, and fix a free operator $H_0$ on it. 
We actually have a freedom of choice for a free operator,
as long as its has a simple structure, see Remarks~\ref{17090412} and 
\ref{17090413}.
The free operator of the paper is chosen to be as close as 
possible 
to the graph Laplacian $-\Delta_G$ given by \eqref{17072919}.
We also provide its resolvent expansion explicitly.
The result of this section can be seen as a preliminary to,
or a prototype of, more general main results of the paper presented in Section~\ref{1707291915}.

\subsection{Graph with rays}\label{17081312}
Let $(K,E_0)$ be a connected, finite, undirected and simple graph,
without loops or multiple edges,
and let $(L_\alpha,E_\alpha)$, $\alpha=1,\dots,N$, be $N$ copies of the discrete half-line,
i.e.
$$L_\alpha=\mathbb N=\{1,2,\ldots\},\quad E_\alpha=\bigl\{\{n,n+1\};\ n\in L_\alpha\bigr\}.$$
We construct the graph $(G,E_G)$ by jointing $(L_\alpha,E_\alpha)$ to $(K,E_0)$ at a vertex $x_\alpha\in K$
for $\alpha=1,\dots,N$:
\begin{align}
\begin{split} 
G&=K\cup L_1\cup\dots\cup L_N,\\
E_G&=E_0\cup E_1\cup\dots E_N\cup \bigl\{\{x_1,1^{(1)}\},\dots,\{x_N,1^{(N)}\}\bigr\}. 
\end{split}
\label{17081313}
\end{align} 
Here we distinguished $1$ of $L_\alpha$ by a superscript: $1^{(\alpha)}\in L_\alpha$.
Note that two different half-lines $(L_\alpha,E_\alpha)$ and $(L_\beta,E_\beta)$, $\alpha\neq\beta$, 
could be jointed to the same vertex $x_\alpha=x_\beta\in K$.

Let $h_0$ be the free \textit{Dirichlet} Schr\"odinger operators on $K$:
For any function $u\colon K\to\mathbb C$ 
we define 
\begin{align*}
(h_0 u)[x]
=\sum_{y\sim x} (u[x]-u[y])+\sum_{\alpha=1}^Ns_\alpha[x]u[x]
\quad\text{for }x\in K
,
\end{align*}
where $s_\alpha[x]=1$ if $x=x_\alpha$ and $s_\alpha[x]=0$ otherwise.
Note that the Dirichlet boundary condition is considered being set on the \textit{boundaries} $1^{(\alpha)}\in L_\alpha$ 
outside $K$.
Similarly, for $\alpha=1,\dots,N$
let $h_\alpha$ be the free Dirichlet Schr\"odinger operators on $L_\alpha$:
For any function $u\colon L_\alpha\to \mathbb C$
we define
\begin{align*}
(h_\alpha u)[n]
=\begin{cases}
2u[1]-u[2]& \text{for }n=1,\\
2u[n]-u[n+1]-u[n-1] & \text{for }n\geq2.
\end{cases}
\end{align*}

Then we define the free operator $H_0$ on $G$ as a direct sum
\begin{align}
H_0=h_0\oplus h_1\oplus\dots\oplus h_N,
\label{17072920}
\end{align}
according to a direct sum decomposition
\begin{align}
F(G)=F(K)\oplus F(L_1)\oplus\cdots\oplus F(L_N),
\label{17080223}
\end{align}
where $F(X)=\{u\colon X\to\mathbb C\}$ denotes the set of all the functions on a space 
$X$.
In the definition \eqref{17072920} 
interactions between $K$ and $L_\alpha$ are not considered,
and the free operator $H_0$ does not coincide with $-\Delta_G$ defined by \eqref{17072919}.
In fact, we can write 
\begin{align}
-\Delta_G =H_0+J,\quad
J=-\sum_{\alpha=1}^N\Bigl( |s_\alpha\rangle\langle f_\alpha|
+|f_\alpha\rangle\langle s_\alpha|\Bigr),
\label{170729}
\end{align}
where $f_\alpha[x]=1$ if $x=1^{(\alpha)}$ and $f_\alpha[x]=0$ otherwise.
However, the operator $H_0$ is simpler and more useful than $-\Delta_G$,
since it does not have a zero eigenvalue or a zero resonance,
and the asymptotic expansion of its resolvent around $0$ 
does not have a singular part,
which we will verify soon below. 
The interaction $J$ can be treated as a special case of 
general perturbations considered in this paper, see Assumption~\ref{assumV}
and Appendix~\ref{17080222}.
Hence this paper covers the Laplacian $-\Delta_G$ on $G$ as a perturbation of 
the free operator $H_0$.

\begin{remark}\label{17090412} 
We will not use an explicit expression for the free operator $h_0$ on $K$,
and we may change it as long as it does not have a zero eigenvalue.
Another simple choice could be, for example, 
\begin{align}
h_0=2\mathop{\mathrm{id}}\nolimits_{F(K)}.
\label{1709041306}
\end{align}
If we adopt \eqref{1709041306} as a free operator,
 the difference $J=-\Delta_G-H_0$ will be different 
from \eqref{170729}.
However, we can still treat it as a special case of 
perturbations considered in Assumption~\ref{assumV},
see also Proposition~\ref{170904}.
\end{remark}

\subsection{Expansion of the free resolvent}

The restriction of $H_0$ to the Hilbert space 
$\mathcal H=\ell^2(G)$, denoted by $H_0$ again, 
is obviously bounded, self-adjoint and non-negative.
The free resolvent is
\begin{align*}
R_0(\kappa)=(H_0+\kappa^2)^{-1}\in\mathcal B(\mathcal H)
\quad 
\text{for }\mathop{\mathrm{Re}}\kappa>0.
\end{align*}
According to the decomposition
$$\mathcal H=\mathfrak h_0\oplus \mathfrak h_1\oplus \cdots\oplus \mathfrak h_N;
\quad 
\mathfrak h_0=\ell^2(K),\quad \mathfrak h_\alpha=\ell^2(L_\alpha)\text{ for }\alpha=1,\dots,N,$$
 the free resolvent $R_0(\kappa)$ will split into a direct sum
\begin{align}
R_0(\kappa)=r_0(\kappa)\oplus r_1(\kappa)\oplus\cdots\oplus r_N(\kappa),
\label{17080220}
\end{align}
where $r_\alpha(\kappa)=(h_\alpha+\kappa^2)^{-1}\in\mathcal B(\mathfrak h_\alpha)$ for $\alpha=0,1,\dots,N$.
Hence the expansion of $R_0(\kappa)$ reduces to those of $r_\alpha(\kappa)$.

We first let $\alpha=0$. 
Since $K$ is a finite set, 
we have $\mathfrak h_0\cong \mathbb C^k$ with $k=\# K$,
and the operator $h_0$ has 
a $k\times k$ matrix representation.
By  standard arguments from linear algebra we can deduce the following result.

\begin{proposition}\label{17080221}
There exist positive real numbers
$0<\lambda_1\le \lambda_2\le\dots\le \lambda_k$, $k=\# K$,
such that 
\begin{align}
\sigma_{\mathrm{pp}}(h_0)=\{\lambda_1,\dots,\lambda_k\},\quad
\sigma_{\mathrm{ac}}(h_0)=\sigma_{\mathrm{sc}}(h_0)=\emptyset.
\label{1707395}
\end{align}
In particular, $h_0$ is invertible,
and, for $|\kappa|< \lambda_1^{1/2}$, the resolvent $r_0(\kappa)$ has the Taylor expansion:
\begin{align*}
r_0(\kappa)=\sum_{j=0}^\infty \kappa^{2j}g_{0,2j}
\text{ in }\mathcal B(\mathfrak h_0);\quad 
g_{0,2j}=(-1)^jh_0^{-j-1}\in\mathcal B(\mathfrak h_0).
\end{align*}
\end{proposition}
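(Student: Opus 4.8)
The plan is to analyze the finite matrix $h_0$ acting on $\mathfrak h_0 \cong \mathbb C^k$ and to read off the spectral decomposition together with the resulting Neumann/Taylor series for $(h_0+\kappa^2)^{-1}$. First I would observe that $h_0$ is self-adjoint on the finite-dimensional space $\mathfrak h_0$, hence diagonalizable with real eigenvalues, and that $\sigma_{\mathrm{ac}}(h_0)=\sigma_{\mathrm{sc}}(h_0)=\emptyset$ is automatic since the spectrum of a finite matrix is pure point. It therefore remains to show that every eigenvalue is strictly positive, i.e.\ that $h_0 > 0$ as a quadratic form. For this I would compute $\langle u, h_0 u\rangle$ for $u\in F(K)$: writing $h_0 = (-\Delta_K) + \sum_{\alpha=1}^N |s_\alpha\rangle\langle s_\alpha|$, where $-\Delta_K$ is the Laplacian of the finite graph $(K,E_0)$, one gets
\begin{align*}
\langle u, h_0 u\rangle
= \tfrac12\sum_{\{x,y\}\in E_0} |u[x]-u[y]|^2 + \sum_{\alpha=1}^N |u[x_\alpha]|^2 \ge 0.
\end{align*}
If this vanishes, then $u$ is constant on the connected graph $K$ and $u[x_\alpha]=0$ for at least one $\alpha$ (there is at least one ray, $N\ge 1$), forcing $u\equiv 0$. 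Hence $h_0$ is positive definite; listing its eigenvalues with multiplicity as $0<\lambda_1\le\dots\le\lambda_k$ gives \eqref{1707395}, and in particular $h_0$ is invertible.

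Next I would establish the Taylor expansion. Since $h_0$ is invertible we may write, for $\kappa\ne 0$,
\begin{align*}
r_0(\kappa) = (h_0+\kappa^2)^{-1} = h_0^{-1}(\mathrm{id}_{F(K)} + \kappa^2 h_0^{-1})^{-1}.
\end{align*}
The operator $\kappa^2 h_0^{-1}$ has norm $|\kappa|^2 \|h_0^{-1}\| = |\kappa|^2/\lambda_1$, which is strictly less than $1$ precisely when $|\kappa| < \lambda_1^{1/2}$. On that disc the Neumann series converges in $\mathcal B(\mathfrak h_0)$, giving
\begin{align*}
r_0(\kappa) = h_0^{-1}\sum_{j=0}^\infty (-\kappa^2 h_0^{-1})^j = \sum_{j=0}^\infty \kappa^{2j}(-1)^j h_0^{-j-1},
\end{align*}
which is exactly the claimed formula with $g_{0,2j} = (-1)^j h_0^{-j-1}$.

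There is essentially no serious obstacle here; the statement is, as the text says, standard linear algebra. The only point that requires the specific structure of the problem rather than generalities is the strict positivity of $h_0$, and even that is a one-line quadratic-form computation once one recognizes the extra diagonal terms $s_\alpha$ as rank-one positive contributions attached to the vertices $x_\alpha$ where the rays are glued on. (If one instead adopted the alternative free operator $h_0 = 2\,\mathrm{id}_{F(K)}$ from Remark~\ref{17090412}, positivity is immediate and $\lambda_1 = 2$.) I would present the argument in the two stages above: first the spectral facts and invertibility via the form computation, then the Neumann series on the disc $|\kappa|<\lambda_1^{1/2}$.
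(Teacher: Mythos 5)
Your proof is correct and follows essentially the same route as the paper: pure point spectrum from finite dimensionality and self-adjointness, strict positivity via the quadratic form together with connectedness of $K$, and then the Neumann series on $|\kappa|<\lambda_1^{1/2}$. The only quibble is the factor $\tfrac12$ in front of $\sum_{\{x,y\}\in E_0}|u[x]-u[y]|^2$, which should be $1$ when the sum runs over unordered edges as the notation indicates; this is immaterial to the argument.
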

\begin{proof}
Since $k=\# K=\dim\mathfrak h_0$ is finite and the operator $h_0$ is self-adjoint,
there exist real eigenvalues $\lambda_1\le \cdots\le \lambda_k$ 
such that \eqref{1707395} holds.
It is well-known that 
for any $u\in \mathfrak h_0$ we can write 
\begin{align*}
\langle u,h_0u\rangle
=\sum_{\{x,y\}\in E_0}\bigl|u[x]-u[y]\bigr|^2+\sum_{\alpha=1}^N \bigl|u[x_\alpha]\bigr|^2.
\end{align*}
Then we have $\lambda_1\ge 0$,
but, moreover, we can deduce $\lambda_1>0$.
In fact, if we assume $\lambda_1=0$, then the corresponding eigenfunction $u_1\in\mathfrak h_0$ 
has to satisfy
\begin{align*}
\sum_{\{x,y\}\in E_0}\bigl|u_1[x]-u_1[y]\bigr|^2+\sum_{\alpha=1}^N \bigl|u_1[x_\alpha]\bigr|^2
=0.
\end{align*}
This and the connectedness of $K$ imply that $u_1$ is identically $0$ on $K$,
which is a contradiction. Hence we have $\lambda_1>0$,
and in particular $h_0$ is invertible.
The asserted Taylor expansion follows by the Neumann series.
\end{proof}

\begin{remark}\label{17090413}
Even if we adopt \eqref{1709041306} as a free operator,
Proposition~\ref{17080221} is still valid. 
In this paper 
we will use only the properties of $h_0$ listed in Proposition~\ref{17080221},
and an explicit expression is not needed.
It is clear that Proposition~\ref{17080221} 
holds true for any (positive) operator $h_0$ without zero eigenvalue.
\end{remark}

Next we consider the case $\alpha=1,\dots,N$.
The operators $h_\alpha$ are identical with 
the free Laplacian on the discrete half-line
investigate in \cite{IJ2}.
Hence we can just quote the results from there. 
Let us set for $s\in \mathbb R$ 
\begin{align}
\begin{split}
\mathfrak l_\alpha^{s}
&=\ell^{1,s}(L_\alpha)
=
\bigl\{x\colon L_\alpha\to \mathbb{C};\ 
\sum_{n\in L_\alpha}(1+n^2)^{s/2}|x[n]|<\infty\bigr\},\\
(\mathfrak l_\alpha^s)^*
&=\ell^{\infty,-s}(L_\alpha)
=\bigl\{x\colon L_\alpha\to \mathbb{C};\ 
\sup_{n\in L_\alpha}(1+n^2)^{-s/2}|x[n]|<\infty\bigr\},
\end{split}
\label{17073017}
\end{align}
and 
\begin{align*}
\mathfrak b^s_\alpha=\mathcal B\bigl(\mathfrak l_\alpha^{s},(\mathfrak l_\alpha^s)^*\bigr).
\end{align*}
We also set 
$$n\wedge m=\min\{n,m\},\quad n\vee m=\max\{n,m\}.$$

\begin{proposition}\label{prop12}
Let $\alpha\in \{1,\dots,N\}$. Then the operator $h_\alpha$ has the spectrum:
\begin{align*}
\sigma_{\mathrm{ac}}(h_\alpha)=[0,4],\quad 
\sigma_{\mathrm{pp}}(h_\alpha)=\sigma_{\mathrm{sc}}(h_\alpha)=\emptyset.
\end{align*}
In addition, for any integer $\nu \ge 0$,
as $\kappa\to 0$ with $\mathop{\mathrm{Re}}\kappa>0$,
the resolvent $r_\alpha(\kappa)$ has an asymptotic expansion:
\begin{align*}
r_\alpha(\kappa)=\sum_{j=0}^\nu \kappa^jg_{\alpha,j}+{\mathcal O}(\kappa^{\nu +1})
\quad \text{in }\mathfrak b^{\nu +2}_\alpha
\end{align*}
with $g_{\alpha,j}\in\mathfrak b^{j+1}_\alpha$ for $j$ even,
and $g_{\alpha,j}\in\mathfrak b^{j}_\alpha$ for $j$ odd, satisfying
\begin{equation}
\begin{split}
h_\alpha g_{\alpha,0}&=g_{\alpha,0}h_\alpha=I,\\ 
h_\alpha g_{\alpha,1}&=g_{\alpha,1}h_\alpha=0,\\ 
h_\alpha g_{\alpha,j}&=g_{\alpha,j}h_\alpha=-g_{\alpha,j-2}\quad \text{for }j\ge 2.
\end{split}
\label{18082023}
\end{equation}
The coefficients $g_{\alpha,j}$ have explicit kernels,
and the first few are given by 
\begin{align}
g_{\alpha,0}[n,m]&=n\wedge m,
\label{G00}\\
g_{\alpha,1}[n,m]&=-n\cdot m,\label{G01}\\
\begin{split}
g_{\alpha,2}[n,m]
&=-\tfrac{1}{6}(n\wedge m)
+\tfrac{1}{6}(n\wedge m)^3+\tfrac{1}{2}n\cdot m\cdot(n\vee m),
\end{split}
\label{G02}\\
\begin{split}
g_{\alpha,3}[n,m]
&=\tfrac{5}{24}n\cdot m-\tfrac{1}{6}n^3\cdot m-\tfrac{1}{6}n\cdot m^3.
\end{split}
\label{G03}
\end{align}
\end{proposition}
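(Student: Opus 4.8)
The plan is to reduce everything to the single half-line $\mathbb{N}$ with the Dirichlet Laplacian, since each $h_\alpha$ is literally that operator, and then to invoke the corresponding result from \cite{IJ2}. First I would recall that on $\ell^2(\mathbb{N})$ the operator $h_\alpha$ is the Jacobi matrix with $2$ on the diagonal and $-1$ on the off-diagonals, with a Dirichlet condition at the ghost site $0$; its spectrum is purely absolutely continuous and equals $[0,4]$, which is a classical computation via the generalized Fourier transform $u\mapsto \sqrt{2/\pi}\sum_n u[n]\sin(n\theta)$ diagonalizing $h_\alpha$ into multiplication by $2-2\cos\theta$ on $L^2((0,\pi))$. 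The absence of point and singular continuous spectrum follows from the explicit spectral representation. This part is standard and I would simply cite \cite{IJ2}.

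For the resolvent expansion, the key object is the kernel of $r_\alpha(\kappa)=(h_\alpha+\kappa^2)^{-1}$, which for $\mathrm{Re}\,\kappa>0$ can be written explicitly in terms of the parameter $\zeta$ determined by $\kappa^2 = \zeta + \zeta^{-1} - 2$ (equivalently $\zeta = e^{i\theta}$ when $\kappa^2 = 2-2\cos\theta$), with $|\zeta|<1$. The Green's function of a half-line Jacobi matrix with Dirichlet boundary is the product form $r_\alpha(\kappa)[n,m] = c(\zeta)\, \phi_{n\wedge m}(\zeta)\,\psi_{n\vee m}(\zeta)$, where $\phi$ solves the Dirichlet condition at $0$ and $\psi$ is the $\ell^2$ (decaying) solution. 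Concretely $\phi_n(\zeta) = \frac{\zeta^{-n/2}-\zeta^{n/2}}{\zeta^{-1/2}-\zeta^{1/2}}$-type expression and $\psi_n(\zeta)=\zeta^{n/2}$, giving $r_\alpha(\kappa)[n,m] = \frac{\zeta^{(n\vee m)/2}}{1-\zeta}\bigl(\zeta^{-(n\wedge m)/2}-\zeta^{(n\wedge m)/2}\bigr)\cdot(\text{normalization})$. One then substitutes the small-$\kappa$ expansion $\zeta = \zeta(\kappa)$, which is analytic in $\kappa$ near $0$ with $\zeta(0)=1$ and $\zeta^{1/2} = 1 - \kappa + \tfrac12\kappa^2 - \cdots$ (this is where the odd powers of $\kappa$ enter, since $\zeta^{1/2}$ is the natural variable), and expands the kernel entrywise in powers of $\kappa$. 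Collecting terms yields the coefficients $g_{\alpha,j}[n,m]$; the polynomial growth in $n,m$ at order $j$ is exactly of degree matching the mapping property $g_{\alpha,j}\in\mathfrak{b}_\alpha^{j+1}$ for $j$ even and $\mathfrak{b}_\alpha^{j}$ for $j$ odd. The algebraic identities \eqref{18082023} follow either by applying $h_\alpha$ to the kernels directly and using telescoping of the difference operator, or by inserting the expansion into $(h_\alpha+\kappa^2)r_\alpha(\kappa)=I$ and matching powers of $\kappa$.

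The error bound $r_\alpha(\kappa)=\sum_{j=0}^\nu \kappa^j g_{\alpha,j}+\mathcal{O}(\kappa^{\nu+1})$ in $\mathfrak{b}_\alpha^{\nu+2}$ requires controlling the remainder kernel uniformly: after subtracting the Taylor polynomial in $\kappa$ of the explicit kernel, the remainder is $\kappa^{\nu+1}$ times a kernel that grows at most polynomially of degree $\nu+1$ in $n\vee m$, hence defines a bounded operator $\mathfrak{l}_\alpha^{\nu+2}\to(\mathfrak{l}_\alpha^{\nu+2})^*$ with norm uniformly bounded as $\kappa\to 0$ in $\mathrm{Re}\,\kappa>0$. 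This is a weighted-$\ell^1$/$\ell^\infty$ estimate: a kernel bounded by $C(1+n)^a(1+m)^a$ maps $\ell^{1,a}\to\ell^{\infty,-a}$ boundedly. The first few coefficients \eqref{G00}--\eqref{G03} are then read off by carrying the expansion of $\zeta^{1/2}(\kappa)$ to fourth order and simplifying; these are finite, if tedious, polynomial computations.

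The main obstacle is not conceptual but bookkeeping: one must be careful that the expansion is in the right Banach space $\mathfrak{b}_\alpha^{\nu+2}$, i.e. that the weights track the polynomial growth correctly, and that the odd-order coefficients genuinely live in $\mathfrak{b}_\alpha^{j}$ rather than $\mathfrak{b}_\alpha^{j+1}$ — this reflects a cancellation in the kernel (the $(n\wedge m)$-dependence drops a degree at odd orders, visible already in $g_{\alpha,1}[n,m]=-nm$ being a pure product with no $n\wedge m$ correction). Since $h_\alpha$ is identical to the half-line operator of \cite{IJ2}, the cleanest route is to quote Proposition~\ref{prop12} verbatim from there; the only thing to verify is that our normalization of $h_\alpha$ (Dirichlet at the site $0$, i.e. $(h_\alpha u)[1]=2u[1]-u[2]$) matches theirs, which it does by construction.
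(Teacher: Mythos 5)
Your proposal is correct and ends up taking exactly the same route as the paper, whose entire proof is a citation to \cite[Section~II]{IJ2}: since $h_\alpha$ is by construction the Dirichlet Laplacian on the discrete half-line, the result is quoted verbatim. Your self-contained sketch via the product-form Green's function $r_\alpha(\kappa)[n,m]=\phi_{n\wedge m}\psi_{n\vee m}/W$ and the expansion of $\zeta(\kappa)$ is the right mechanism (note only the small mismatch between writing $\kappa^2=\zeta+\zeta^{-1}-2$ and then using $\zeta^{n/2}$ for the decaying solution — with that relation the decaying solution is $\zeta^n$ and it is $\zeta$ itself, not $\zeta^{1/2}$, that is analytic in $\kappa$ with $\zeta=1-\kappa+\tfrac12\kappa^2-\cdots$), but this does not affect the conclusion.
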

\begin{proof}
See \cite[Section~I\hspace{-.1em}I]{IJ2}. 
\end{proof}

Now, due to \eqref{17080220} and Propositions~\ref{17080221} and \ref{prop12},
we can expand the free resolvent $R_0(\kappa)$.
In addition to \eqref{17073017}, let us set $\mathfrak l_0=\ell^1(K)$,
$(\mathfrak l_0)^*=\ell^\infty(K)$, 
and write for $s\in \mathbb R$
\begin{align*}
\mathcal L^s&=\ell^{1,s}(G)=\mathfrak l_0\oplus \mathfrak l_1^s\oplus\cdots\oplus\mathfrak l_N^s
,\\
(\mathcal L^s)^*
&=\ell^{\infty,-s}(G)
=(\mathfrak l_0)^*\oplus (\mathfrak l_1^s)^*\oplus\cdots\oplus(\mathfrak l_N^s)^*,
\end{align*}
and 
\begin{align*}
\mathcal B^s=\mathcal B\bigl(\mathcal L^s,(\mathcal L^s)^*\bigr)
.
\end{align*}

\begin{corollary}
The operator $H_0$ has the spectrum: 
\begin{align*}
\sigma_{\mathrm{ac}}(H_0)=[0,4],\quad 
\sigma_{\mathrm{sc}}(H_0)=\emptyset,\quad
\sigma_{\mathrm{pp}}(H_0)=\{\lambda_1,\dots,\lambda_k\}.
\end{align*}
In addition, for any integer $\nu \ge 0$,
as $\kappa\to 0$ with $\mathop{\mathrm{Re}}\kappa>0$,
the resolvent $R_0(\kappa)$ has an asymptotic expansion:
\begin{align}
R_0(\kappa)=\sum_{j=0}^\nu \kappa^jG_{0,j}+{\mathcal O}(\kappa^{\nu +1})
\quad \text{in }\mathcal B^{\nu +2}
\label{free-expan}
\end{align}
with $G_{0,j}\in\mathcal B^{j+1}$ for $j$ even,
and $G_{0,j}\in\mathcal B^{j}$ for $j$ odd, given by 
\begin{align}
G_{0,j}
=\left\{
\begin{array}{ll}
g_{0,j}\oplus g_{1,j}\oplus\cdots\oplus g_{N,j}&\text{for }j\text{ even},\\
0\oplus g_{1,j}\oplus\cdots\oplus g_{N,j}& \text{for }j\text{ odd}.
\end{array}
\right.
\label{170803}
\end{align}
The coefficients satisfy
\begin{equation}
\begin{split}
H_0 G_{0,0}&=G_{0,0}H_0=I,\\ 
H_0 G_{0,1}&=G_{0,1}H_0=0,\\ 
H_0 G_{0,j}&=G_{0,j}H_0=-G_{0,j-2}\quad \text{for }j\ge 2.
\end{split}
\label{18082023a}
\end{equation}
\end{corollary}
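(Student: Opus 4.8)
The plan is to assemble the claim directly from the three preparatory results already in hand: the direct-sum decomposition $\mathcal H=\mathfrak h_0\oplus\mathfrak h_1\oplus\cdots\oplus\mathfrak h_N$ with the corresponding splitting $R_0(\kappa)=r_0(\kappa)\oplus r_1(\kappa)\oplus\cdots\oplus r_N(\kappa)$ of \eqref{17080220}, the Taylor expansion of $r_0(\kappa)$ from Proposition~\ref{17080221}, and the asymptotic expansion of each $r_\alpha(\kappa)$, $\alpha=1,\dots,N$, from Proposition~\ref{prop12}. First I would record the spectral statement: since the spectrum of a finite direct sum of self-adjoint operators is the union of the spectra, and the absolutely continuous (resp.\ singular continuous, resp.\ pure point) part decomposes accordingly, combining $\sigma_{\mathrm{pp}}(h_0)=\{\lambda_1,\dots,\lambda_k\}$, $\sigma_{\mathrm{ac}}(h_0)=\sigma_{\mathrm{sc}}(h_0)=\emptyset$ with $\sigma_{\mathrm{ac}}(h_\alpha)=[0,4]$, $\sigma_{\mathrm{pp}}(h_\alpha)=\sigma_{\mathrm{sc}}(h_\alpha)=\emptyset$ gives exactly $\sigma_{\mathrm{ac}}(H_0)=[0,4]$, $\sigma_{\mathrm{sc}}(H_0)=\emptyset$, $\sigma_{\mathrm{pp}}(H_0)=\{\lambda_1,\dots,\lambda_k\}$.

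For the resolvent expansion, fix $\nu\ge 0$. From Proposition~\ref{17080221}, $r_0(\kappa)=\sum_{j=0}^{\infty}\kappa^{2j}g_{0,2j}$ converges in $\mathcal B(\mathfrak h_0)$ for $|\kappa|<\lambda_1^{1/2}$; in particular, truncating at order $\nu$ and absorbing the remaining geometric tail into $\mathcal O(\kappa^{\nu+1})$, we may write $r_0(\kappa)=\sum_{j=0}^{\nu}\kappa^j g_{0,j}+\mathcal O(\kappa^{\nu+1})$ in $\mathcal B(\mathfrak h_0)$ with $g_{0,j}=0$ for odd $j$. From Proposition~\ref{prop12}, $r_\alpha(\kappa)=\sum_{j=0}^{\nu}\kappa^j g_{\alpha,j}+\mathcal O(\kappa^{\nu+1})$ in $\mathfrak b^{\nu+2}_\alpha$ for each $\alpha=1,\dots,N$. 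Taking the direct sum of these $N+1$ expansions and defining $G_{0,j}$ by \eqref{170803}, one obtains \eqref{free-expan}. The point about the function spaces is that $\mathcal B^{\nu+2}=\mathcal B(\mathcal L^{\nu+2},(\mathcal L^{\nu+2})^*)$ decomposes as the direct sum of $\mathcal B(\mathfrak l_0,(\mathfrak l_0)^*)\supseteq\mathcal B(\mathfrak h_0)$ (the inclusion because $K$ is finite, so all these spaces coincide up to equivalence of norms) and the $\mathfrak b^{\nu+2}_\alpha$; hence each $G_{0,j}$ lies in $\mathcal B^{\nu+2}$, and more precisely in $\mathcal B^{j+1}$ for $j$ even and $\mathcal B^{j}$ for $j$ odd, since the summand $g_{0,j}$ (finite matrix, hence in every $\mathcal B(\mathfrak l_0,(\mathfrak l_0)^*)$) does not constrain the weight while the $g_{\alpha,j}$ contribute the stated $\mathfrak b^{j+1}_\alpha$ or $\mathfrak b^{j}_\alpha$ membership. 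The remainder estimate is compatible: a direct sum of $\mathcal O(\kappa^{\nu+1})$ terms is $\mathcal O(\kappa^{\nu+1})$ in the direct-sum operator norm.

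Finally, the identities \eqref{18082023a} follow componentwise. For $j=0$: $g_{0,0}=h_0^{-1}$ satisfies $h_0 g_{0,0}=g_{0,0}h_0=I$ on $\mathfrak h_0$ by Proposition~\ref{17080221}, and $h_\alpha g_{\alpha,0}=g_{\alpha,0}h_\alpha=I$ on $\mathfrak h_\alpha$ by \eqref{18082023}; summing and using $H_0=h_0\oplus h_1\oplus\cdots\oplus h_N$ gives $H_0 G_{0,0}=G_{0,0}H_0=I$. For $j=1$: $G_{0,1}=0\oplus g_{1,1}\oplus\cdots\oplus g_{N,1}$, and $h_\alpha g_{\alpha,1}=g_{\alpha,1}h_\alpha=0$ by \eqref{18082023}, so $H_0 G_{0,1}=G_{0,1}H_0=0$. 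For $j\ge 2$: if $j$ is even, the $K$-component gives $h_0 g_{0,j}=(-1)^j h_0^{-j}=-(-1)^{j-1}h_0^{-(j-1)-1}$, wait — more simply, from $g_{0,2\ell}=(-1)^\ell h_0^{-\ell-1}$ one checks $h_0 g_{0,2\ell}=(-1)^\ell h_0^{-\ell}=-g_{0,2\ell-2}$, matching the recursion; the half-line components satisfy $h_\alpha g_{\alpha,j}=g_{\alpha,j}h_\alpha=-g_{\alpha,j-2}$ by \eqref{18082023}; if $j$ is odd, the $K$-component is $0=-g_{0,j-2}$ (since $g_{0,j-2}=0$ for odd $j-2$) and the half-line components again satisfy \eqref{18082023}. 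Summing the components yields $H_0 G_{0,j}=G_{0,j}H_0=-G_{0,j-2}$ for all $j\ge 2$. There is essentially no obstacle here; the only point requiring a word of care is the bookkeeping of the weighted spaces on $K$ versus the half-lines, i.e.\ observing that finiteness of $K$ makes $\mathfrak l_0$, $(\mathfrak l_0)^*$ and $\mathfrak h_0$ mutually norm-equivalent so that the $K$-block never degrades the weight in the $\mathcal B^s$-membership statements, and this is routine.
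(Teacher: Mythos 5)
Your proposal is correct and follows exactly the route the paper takes: the paper's proof consists of the single remark that the assertions are obvious from the direct-sum structure \eqref{17072920}, \eqref{17080220} and Propositions~\ref{17080221} and \ref{prop12}, and your write-up simply supplies the (routine) componentwise details of that argument, including the correct check $h_0 g_{0,2\ell}=(-1)^\ell h_0^{-\ell}=-g_{0,2\ell-2}$.
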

\begin{proof}
The assertions are obvious by \eqref{17072920},
\eqref{17080220} and Propositions~\ref{17080221} and \ref{prop12}.
\end{proof}

\section{The perturbed operator}\label{1707291915}

In this section we introduce our class of perturbations,
formulate threshold types in terms of the generalized eigenfunctions,
and then state the main theorems of the paper.

\subsection{Generalized eigenspace and threshold types}

We consider the following class of perturbations, cf.~\cite{JN,IJ1,IJ2}.

\begin{assumption}\label{assumV}
Assume that $V\in {\mathcal B}(\mathcal H)$ is self-adjoint,
and that there exist 
an injective operator $v\in \mathcal B({\mathcal K},\mathcal L^\beta)$
with $\beta\ge 1$ 
and a self-adjoint unitary operator
$U\in \mathcal B({\mathcal K})$,
both defined on some Hilbert space ${\mathcal K}$, such that 
\begin{equation*}
V=vUv^*\in \mathcal B\bigl((\mathcal L^{\beta})^*,\mathcal L^\beta\bigr).
\end{equation*}
\end{assumption}
We note that the conditions in the assumption imply that $V$ is compact on $\mathcal{H}$.
A self-adjoint operator $V\in\mathcal B(\mathcal H)$ satisfies Assumption~\ref{assumV},
if it extends to a bounded operator 
$\ell^{2,-\beta-1/2-\epsilon}(G)\to\ell^{2,\beta+1/2+\epsilon}(G)$
for some $\epsilon>0$, where $\ell^{2,s}(G)$ denotes the standard weighted $\ell^2$-space on $G$,
see Proposition~\ref{170904}.
For other examples of perturbations satisfying Assumption~\ref{assumV} 
we refer to \cite[Appendix~B]{IJ1}. See also Appendix~\ref{17081323}.

Under Assumption~\ref{assumV} we let 
$$
H=H_0+V,
$$
and for $-\kappa^2\notin \sigma(H)$
$$R(\kappa)=(H+\kappa^2)^{-1}.$$
As we can see in Appendix~\ref{17080222}, 
the interaction operator $J$ from \eqref{170729} satisfies Assumption~\ref{assumV},
hence the above $H$ includes $-\Delta_G$ as a special case.
We know a priori  that $H_0$ does not have a zero eigenvalue or a zero resonance,
and its resolvent has a simpler expansion than that of $-\Delta_G$.
This fact effectively simplifies the expansion procedure for the 
perturbed resolvent $R(\kappa)$, and enables us to obtain more precise 
expressions for the coefficients than those in \cite{IJ1}.

Let us consider the solutions to the zero eigen-equation 
$H\Psi=0$ in the largest space where it can be defined.  
Define the \textit{generalized zero eigenspace} $\widetilde{\mathcal E}$ as
\begin{align*}
\widetilde{\mathcal{E}}&=\bigl\{\Psi\in(\mathcal{L}^{\beta})^*;\  H\Psi=0\bigr\}.
\end{align*}
We will analyze this space in detail in Section~\ref{12.12.19.2.5}.
Here let us just quote some of the remarkable consequences shown there. 
Let 
$\mathbf n^{(\alpha)}\in(\mathcal L^1)^*$ and $\mathbf 1^{(\alpha)}\in (\mathcal L^0)^*$ 
be the functions defined as
\begin{align*}
\mathbf{n}^{(\alpha)}[x]
&=\left\{
\begin{array}{ll}
m& \text{for }x=m\in L_\alpha,\\
0& \text{for }x\in G\setminus L_\alpha,
\end{array}
\right.
\intertext{and} 
\mathbf{1}^{(\alpha)}[x]
&=\left\{
\begin{array}{ll}
1& \text{for }x\in L_\alpha,\\
0& \text{for }x\in G\setminus L_\alpha,
\end{array}
\right.
\end{align*}
respectively. We abbreviate the spaces spanned by these functions as 
\begin{align*}
\mathbb C\mathbf n=\mathbb{C}\mathbf{n}^{(1)}\oplus\cdots\oplus\mathbb{C}\mathbf{n}^{(N)}
,\quad
\mathbb C\mathbf 1=\mathbb{C}\mathbf{1}^{(1)}\oplus\cdots\oplus\mathbb{C}\mathbf{1}^{(N)}.
\end{align*}
Note that 
$\mathbf n^{(\alpha)}$ and $\mathbf 1^{(\alpha)}$
are defined not only on $L_\alpha$ but actually on the whole $G$ by the 
zero-extension.
Hence the above direct sums mean that summands are linearly independent,
which are understood slightly differently from those so far 
based on the decomposition \eqref{17080223}.
In Proposition~\ref{13.1.16.2.51}
we will see that under Assumption~\ref{assumV} with $\beta\ge 1$
the generalized eigenfunctions have specific asymptotics:
\begin{align}
\widetilde{\mathcal{E}}\subset
\mathbb{C}\mathbf{n}
\oplus
\mathbb{C}\mathbf{1}\oplus\mathcal{L}^{\beta-2},
\label{13.3.7.13.48}
\end{align}
cf.\ \eqref{12.12.19.6.56} and \eqref{170804}.
With this asymptotics it makes sense to consider the following subspaces:
\begin{align}
\mathcal{E}=\widetilde{\mathcal{E}}\cap(\mathbb{C}\mathbf{1}\oplus\mathcal{L}^{\beta-2}),
\quad 
\mathsf{E}=\widetilde{\mathcal{E}}\cap\mathcal{L}^{\beta-2}.\label{16071617}
\end{align}
To be precise, a function in $\widetilde{\mathcal E}\setminus \mathcal E$ should be called a 
\textit{non-resonance eigenfunction},
one in $\mathcal E\setminus\mathsf E$ a \textit{resonance eigenfunction},
and one in $\mathsf E$ a \textit{bound eigenfunction},
but we shall often call them \textit{generalized eigenfunctions} or simply \textit{eigenfunctions}.

Let us introduce the same classification of threshold  
as in~\cite[Definition 1.6]{IJ1}.

\begin{definition}\label{def-reg-excp}
The threshold $z=0$ is said to be 
\begin{enumerate}
\item 
a \textit{regular point}, 
if $\mathcal{E}= \mathsf{E}= \{0\}$;
\item
an \textit{exceptional point of the first kind}, 
if $\mathcal{E}\supsetneq \mathsf{E}= \{0\}$;
\item
an \textit{exceptional point of the second kind}, 
if 
$\mathcal{E}=\mathsf{E}\supsetneq \{0\}$;
\item
an \textit{exceptional point of the third kind}, 
if 
$\mathcal{E}\supsetneq \mathsf{E}\supsetneq \{0\}$.
\end{enumerate} 
\end{definition}

In Proposition~\ref{13.1.16.2.51}
we will have more detailed expressions for the generalized eigenfunctions.
It should be noted here that there is a dimensional relation:
\begin{align}
\dim(\widetilde{\mathcal E}/\mathcal E)+\dim(\mathcal E/\mathsf E)=N,\quad 
0\le \dim\mathsf E< \infty.
\label{170802}
\end{align}
The former identity reflects a certain topological stability of the non-decaying eigenspace
under small perturbations. In addition, we will see in 
Theorems~\ref{17082015} and \ref{17082017} in Appendix~\ref{170818} that 
for any $\Psi_1\in\widetilde{\mathcal E}$
and $\Psi_2\in\mathcal E$, if we let 
\begin{align*}
\Psi_1-\sum_{\alpha=1}^N c_\alpha^{(1)}\mathbf n^{(\alpha)}
\in \mathbb C\mathbf 1\oplus\mathcal L^{\beta-2}
,\quad
\Psi_2-\sum_{\alpha=1}^Nc_\alpha^{(2)}\mathbf 1^{(\alpha)}\in \mathcal L^{\beta-2},
\end{align*}
then 
\begin{align}
\sum_{\alpha=1}^N\overline c_\alpha^{(2)} c_\alpha^{(1)}=0.
\label{1708221320}
\end{align}
Hence it would be natural to consider \textit{orthogonality} in $\widetilde{\mathcal E}$
in terms of the asymptotics, in addition to the one with respect to the standard $\ell^2$-inner product.
Let us define the \textit{generalized orthogonal projections} onto the eigenspaces. We use $\langle\cdot,\cdot\rangle$ to denote the duality between 
$\mathcal{L}^s$ and $(\mathcal{L}^s)^{\ast}$. If $\beta\geq2$ then $\langle \Phi,\Psi\rangle$ is defined for $\Phi\in\mathsf{E}$ and $\Psi\in\mathcal{E}$. If we only assume $\beta\geq1$ then we must assume $\Phi\cdot\Psi\in\mathcal{L}^0$ to justify the notation $\langle \Phi,\Psi\rangle$.
Here $(\Phi\cdot\Psi)[n]=\Psi[n]\Phi[n]$, $n\in G$, is the elementwise product of two sequences.

\begin{definition}\label{170819}
We call a subset $\{\Psi_\gamma\}_{\gamma}\subset \mathcal E$ 
a \textit{resonance basis}, if 
the set $\{[\Psi_\gamma]\}_{\gamma}$ of representatives 
forms a basis in $\mathcal E/\mathsf E$. 
It is said to be \textit{orthonormal}, if 
\begin{enumerate}
\item for any $\gamma$ and $\Psi\in\mathsf E$ one has $\overline{\Psi}\cdot\Psi_\gamma\in \mathcal L^0$ and%
 $\langle\Psi,\Psi_\gamma\rangle=0$;
\item there exists an orthonormal system 
$\{c^{(\gamma)}\}_\gamma\subset \mathbb C^N$
such that for any $\gamma$
$$\Psi_\gamma
-\sum_{\alpha=1}^Nc_\alpha^{(\gamma)}\mathbf 1^{(\alpha)}\in \mathcal L^{\beta-2}.
$$
\end{enumerate}
The \textit{orthogonal resonance projection} $\mathcal P$ is defined as 
\begin{align*}
\mathcal P=\sum_{\gamma}|\Psi_\gamma\rangle\langle\Psi_\gamma|.
\end{align*}
\end{definition}

\begin{definition}
We call a basis $\{\Psi_\gamma\}_{\gamma}\subset \mathsf E$ 
a \textit{bound basis} to distinguish it from a resonance basis. 
It is said to be \textit{orthonormal}, if 
for any $\gamma$ and $\gamma'$ one has 
$\overline{\Psi}_{\gamma'}\cdot\Psi_\gamma\in \mathcal L^0$ and
$$\langle\Psi_{\gamma'},\Psi_\gamma\rangle=\delta_{\gamma\gamma'}.$$
The \textit{orthogonal bound projection} $\mathsf P$ is defined as 
\begin{align*}
\mathsf P=\sum_{\gamma}|\Psi_\gamma\rangle\langle\Psi_\gamma|.
\end{align*}
\end{definition}

We remark that the above orthogonal projections 
$\mathcal P$ and $\mathsf P$ are 
independent of choice of orthonormal bases.

\subsection{Main results: Expansion of perturbed resolvent}
We now present the resolvent expansion for each threshold type given 
in Definition~\ref{def-reg-excp}. We have to impose different assumptions 
on the parameter $\beta$ depending on threshold types. 
For simplicity we state the results only for integer values of $\beta$,
but an extension to general $\beta$ is straightforward.

\begin{theorem}\label{thm-reg}
Assume that the threshold $0$ is a \emph{regular} point,
and that Assumption~\ref{assumV} is fulfilled for some integer $\beta\ge 2$.
Then
\begin{equation}
R(\kappa)=\sum_{j=0}^{\beta-2}\kappa^jG_j+\mathcal{O}(\kappa^{\beta-1})
\quad
\text{in }\mathcal{B}^{\beta-2}
\label{17081821}
\end{equation}
with 
$G_j\in\mathcal B^{j+1}$ for $j$ even,
and $G_j\in\mathcal B^{j}$ for $j$ odd.
The coefficients $G_j$ can be computed explicitly. In particular,
\begin{align}
G_{-2}=\mathsf P=0,\quad G_{-1}=\mathcal P=0.
\label{17081822}
\end{align}
\end{theorem}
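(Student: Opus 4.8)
\textbf{Proof proposal for Theorem~\ref{thm-reg}.}
The plan is to run the Jensen--Nenciu type inversion scheme for $R(\kappa)=(H_0+V+\kappa^2)^{-1}$, exploiting crucially that, by the Corollary, the free resolvent $R_0(\kappa)$ has the regular (singularity-free) expansion \eqref{free-expan}, so that no inversion of a singular leading term is required beyond what the exceptional eigenspaces force. Write $V=vUv^*$ as in Assumption~\ref{assumV}, and use the standard symmetrized resolvent identity: for $\mathop{\mathrm{Re}}\kappa>0$,
\begin{equation*}
R(\kappa)=R_0(\kappa)-R_0(\kappa)v\bigl(U+v^*R_0(\kappa)v\bigr)^{-1}v^*R_0(\kappa).
\end{equation*}
Set $M(\kappa)=U+v^*R_0(\kappa)v\in\mathcal B(\mathcal K)$. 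Since $v\in\mathcal B(\mathcal K,\mathcal L^\beta)$ with $\beta\ge2$ and $\beta-2\ge0$, the expansion \eqref{free-expan} applied with $\nu=\beta-2$ gives $M(\kappa)=\sum_{j=0}^{\beta-2}\kappa^j M_j+\mathcal O(\kappa^{\beta-1})$ in $\mathcal B(\mathcal K)$, with $M_0=U+v^*G_{0,0}v$. The first step is to show that in the \emph{regular} case $M_0$ is invertible in $\mathcal B(\mathcal K)$. Here I would identify $\ker M_0$ with the generalized eigenspace: if $M_0\phi=0$ then $\Psi:=-G_{0,0}v\phi\in(\mathcal L^\beta)^*$ solves $H\Psi=0$, i.e.\ $\Psi\in\widetilde{\mathcal E}$, and conversely. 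In fact, because $G_{0,0}=g_{0,0}\oplus g_{1,0}\oplus\cdots\oplus g_{N,0}$ with $g_{\alpha,0}[n,m]=n\wedge m$ bounded below in growth only by $\mathbb C\mathbf n$, the range of $G_{0,0}v$ (with $v$ mapping into $\mathcal L^\beta$, $\beta\ge2$) lands in $\mathbb C\mathbf n\oplus\mathbb C\mathbf1\oplus\mathcal L^{\beta-2}$; tracking the leading asymptotics one sees $\ker M_0\ne\{0\}$ would produce a nonzero element of $\mathcal E$ or $\mathsf E$ (one must check that the $\mathbf n^{(\alpha)}$-coefficients of $G_{0,0}v\phi$ vanish automatically, or handle them via the next order term $M_1$), contradicting regularity $\mathcal E=\mathsf E=\{0\}$. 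Thus $M_0^{-1}\in\mathcal B(\mathcal K)$ exists.

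Given invertibility of $M_0$, the second step is routine: $M(\kappa)^{-1}=\bigl(M_0(I+M_0^{-1}(M(\kappa)-M_0))\bigr)^{-1}$ expands by Neumann series into $\sum_{j=0}^{\beta-2}\kappa^j \widetilde M_j+\mathcal O(\kappa^{\beta-1})$ in $\mathcal B(\mathcal K)$. Substituting back, and multiplying on both sides by the expansion \eqref{free-expan} of $R_0(\kappa)$ (using that $v\colon\mathcal K\to\mathcal L^\beta$ and $v^*\colon(\mathcal L^\beta)^*\to\mathcal K$ are bounded, so products of the $G_{0,j}$'s with $v,v^*$ sandwiched stay in the appropriate $\mathcal B^s$-classes), one collects powers of $\kappa$ to obtain \eqref{17081821} with explicit $G_j$; the parity statement $G_j\in\mathcal B^{j+1}$ for $j$ even and $G_j\in\mathcal B^j$ for $j$ odd is inherited termwise from the corresponding parity property of the $G_{0,j}$ in the Corollary, since each $G_j$ is a finite sum of terms $G_{0,j_1}v\widetilde M_{j_2}v^*G_{0,j_3}$ with $j_1+j_2+j_3=j$ and the weight losses add up correctly. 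Finally, since the expansion \eqref{17081821} has \emph{no} negative powers of $\kappa$, the formal coefficients $G_{-2}$ and $G_{-1}$ are $0$; identifying them with $\mathsf P$ and $\mathcal P$ is immediate because in the regular case $\mathsf E=\mathcal E=\{0\}$, so the projections $\mathsf P$ and $\mathcal P$ of their Definitions are the zero operator. This gives \eqref{17081822}.

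The main obstacle I anticipate is the first step — proving $M_0$ is invertible, equivalently that regularity of the threshold (phrased via the decaying/non-decaying structure of $\widetilde{\mathcal E}$, $\mathcal E$, $\mathsf E$) forces $\ker M_0=\{0\}$. The subtlety is that $\Psi=-G_{0,0}v\phi$ a priori only lies in $(\mathcal L^\beta)^*$ and could have a $\mathbb C\mathbf n$-component, so it need not lie in $\mathcal E$; one has to use the equation $H\Psi=0$ together with the finite-rank, $\mathcal L^\beta$-valued structure of $V$ to show that any such $\Psi$ coming from $\ker M_0$ actually has vanishing $\mathbf n^{(\alpha)}$-coefficients (hence lies in $\mathcal E$, forcing $\Psi=0$ and then $\phi=0$ by injectivity of $v$). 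This is exactly the content that will be made precise in Section~\ref{12.12.19.2.5} via Proposition~\ref{13.1.16.2.51} and the decomposition \eqref{13.3.7.13.48}, and in the detailed case analysis of Section~\ref{1608216}; I would quote those structural results rather than re-derive them here. The rest of the argument is bookkeeping of $\kappa$-powers and weights, entirely parallel to \cite{IJ2} thanks to the choice of $H_0$.
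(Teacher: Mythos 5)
Your proposal follows essentially the same route as the paper: the symmetrized resolvent identity with $M(\kappa)=U+v^*R_0(\kappa)v$, invertibility of $M_0$ in the regular case (which the paper obtains from Corollary~\ref{160616438}, itself a consequence of the structural Proposition~\ref{13.1.16.2.51} you propose to quote), a Neumann series inversion, and back-substitution; the identification $G_{-2}=\mathsf P=0$, $G_{-1}=\mathcal P=0$ is then immediate. The one subtlety you flag — a possible $\mathbb C\mathbf n$-component of $G_{0,0}v\phi$ — in fact does not arise, since by Lemma~\ref{12.11.24.18.24} the operator $G_{0,0}$ maps $\mathcal L^1$ into $(\mathcal L^0)^*$, so $\ker M_0\neq\{0\}$ directly yields a nonzero element of $\mathcal E$, contradicting regularity.
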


\begin{theorem}\label{thm-ex1}
Assume that the threshold $0$ is an exceptional point of the \emph{first kind},
and that Assumption~\ref{assumV} is fulfilled for some integer $\beta\ge 3$.
Then
\begin{equation}\label{expand-first}
R(\kappa)=\sum_{j=-1}^{\beta-4}\kappa^jG_j+\mathcal{O}(\kappa^{\beta-3})\quad
\text{in }\mathcal{B}^{\beta-1}
\end{equation}
with 
$G_j\in\mathcal B^{j+3}$ for $j$ even,
and $G_j\in\mathcal B^{j+2}$ for $j$ odd.
The coefficients $G_j$ can be computed explicitly. In particular,
\begin{align}
G_{-2}=\mathsf P=0,\quad 
G_{-1}=\mathcal P\neq 0.
\label{1608229}
\end{align}
\end{theorem}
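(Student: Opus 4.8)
The plan is to implement the Jensen--Nenciu expansion scheme adapted to the simpler free operator $H_0$, exactly as in the discrete half-line case \cite{IJ2}, and then to identify the singular coefficients with the projections introduced in Section~\ref{1707291915}. First I would write the standard factorization $V=vUv^*$ from Assumption~\ref{assumV} and express the resolvent via the symmetrized resolvent equation
\begin{equation*}
R(\kappa)=R_0(\kappa)-R_0(\kappa)v\bigl(M(\kappa)\bigr)^{-1}v^*R_0(\kappa),
\quad
M(\kappa)=U+v^*R_0(\kappa)v,
\end{equation*}
which is valid for $\mathop{\mathrm{Re}}\kappa>0$ small. Since the threshold $0$ is an exceptional point of the first kind, $\mathsf E=\{0\}$ but $\mathcal E\supsetneq\{0\}$, so $M(0)=U+v^*G_{0,0}v$ is \emph{not} invertible; its kernel is finite-dimensional. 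The key structural input is the free expansion \eqref{free-expan}, which under $\beta\ge 3$ gives $M(\kappa)=M_0+\kappa M_1+\kappa^2 M_2+\mathcal O(\kappa^3)$ in $\mathcal B(\mathcal K)$ with $M_0=M(0)$, $M_1=v^*G_{0,1}v$, etc. Crucially, because $H_0$ has no zero eigenvalue or zero resonance, there is no singular $\kappa^{-1}$ or $\kappa^{-2}$ term in $R_0(\kappa)$, so this is a genuine Taylor-type start and the inversion procedure is as short as in \cite{IJ2}.

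Next I would run the inversion of $M(\kappa)$ by the iterated Feshbach/Grushin reduction of \cite{JN,JNerr}. Let $Q_0$ be the (finite-rank) orthogonal projection onto $\ker M_0$ in $\mathcal K$. One expansion step reduces the inversion of $M(\kappa)$ to the inversion of a first-order operator on $Q_0\mathcal K$ of the form $\kappa\bigl(m_1+\mathcal O(\kappa)\bigr)$ with $m_1=Q_0 M_1 Q_0$ acting on $Q_0\mathcal K$; here I would need to know that $m_1$ is \emph{invertible} on $Q_0\mathcal K$ for the expansion to terminate after producing a single $\kappa^{-1}$ term. This invertibility is precisely the analytic counterpart of the condition $\mathsf E=\{0\}$: the kernel of $m_1$ on $Q_0\mathcal K$ should be canonically isomorphic to $\mathsf E$ via the correspondence $\Phi=-G_{0,0}v\varphi$ between $\varphi\in\ker M_0$ and threshold eigenfunctions $\Phi$, together with the identity $v^*G_{0,1}v=-|v^*\mathbf 1\rangle\langle v^*\mathbf 1|$-type rank structure coming from \eqref{G01} (the half-line kernel $g_{\alpha,1}[n,m]=-n\cdot m$). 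Establishing this dictionary between $\ker M_0$, $\ker m_1$ and the spaces $\widetilde{\mathcal E},\mathcal E,\mathsf E$ defined in \eqref{16071617}, and the asymptotics \eqref{13.3.7.13.48}, is where Section~\ref{12.12.19.2.5} is used; I would quote Proposition~\ref{13.1.16.2.51} for the structure of generalized eigenfunctions and for \eqref{170802}.

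With the inversion in hand, $\bigl(M(\kappa)\bigr)^{-1}=\kappa^{-1}S_1+S_0+\kappa S_{-1}+\cdots$ where $S_1$ is a rank operator supported on $Q_0\mathcal K$ expressible through $m_1^{-1}$. Substituting back into the resolvent equation and collecting powers of $\kappa$ yields \eqref{expand-first}, with the orders of the function spaces $\mathcal B^{j+3}$, $\mathcal B^{j+2}$ dictated by the mapping properties $G_{0,j}\in\mathcal B^{j+1}$ (even), $\in\mathcal B^j$ (odd) in \eqref{170803} composed with the fixed gain/loss of two weights from $v\in\mathcal B(\mathcal K,\mathcal L^\beta)$; tracking these weights carefully, using $\beta\ge3$, gives convergence in $\mathcal B^{\beta-1}$ with remainder $\mathcal O(\kappa^{\beta-3})$. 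Finally, for \eqref{1608229}: $G_{-2}=0$ is immediate since there is no $\kappa^{-2}$ term, and this matches $\mathsf P=0$ because $\mathsf E=\{0\}$. To see $G_{-1}=\mathcal P$, I would compute $G_{-1}=-G_{0,0}v S_1 v^* G_{0,0}$ explicitly, use the eigenfunction correspondence $\Phi_\gamma=-G_{0,0}v\varphi_\gamma$ to rewrite this as $\sum_\gamma|\Phi_\gamma\rangle\langle\Phi_\gamma|$ for a suitable basis, and check that this basis is an orthonormal resonance basis in the sense of Definition~\ref{170819} — the orthogonality conditions there being exactly \eqref{1708221320} and the pairing normalization that the Feshbach reduction produces automatically. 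That $G_{-1}=\mathcal P\neq0$ then follows from $\mathcal E\supsetneq\{0\}$.

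The main obstacle I anticipate is not the bookkeeping of powers of $\kappa$ or the weighted-space orders, which is routine, but rather the precise identification of the abstract kernels arising in the Feshbach reduction ($\ker M_0$ and $\ker m_1$) with the concretely defined eigenspaces $\mathcal E$ and $\mathsf E$, including verifying that the $\ell^2$-pairing $\langle\Phi_{\gamma'},\Phi_\gamma\rangle$ is well-defined (this needs $\overline\Phi_{\gamma'}\cdot\Phi_\gamma\in\mathcal L^0$, hence care with the decay exponent $\beta-2$ and the borderline $\beta\ge3$) and that it reproduces the normalization implicit in $\mathcal P$. This is why the detailed analysis of generalized eigenfunctions in Section~\ref{12.12.19.2.5} has to be done first, and why the cleanest route is to prove the abstract expansion for $\bigl(M(\kappa)\bigr)^{-1}$ and the eigenfunction dictionary in parallel, as in \cite{IJ2}.
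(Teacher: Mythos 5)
Your proposal follows essentially the same route as the paper: the symmetrized second resolvent formula with $M(\kappa)=U+v^*R_0(\kappa)v$, one application of the Jensen--Nenciu inversion formula producing the operator $QM_1Q$ on $Q\mathcal K=\mathop{\mathrm{Ker}}M_0$, whose invertibility is exactly the first-kind condition (Corollary~\ref{160616438}), and the identification $G_{-1}=-G_{0,0}vm_0^\dagger v^*G_{0,0}=\mathcal P$ via the map $\Phi\mapsto -G_{0,0}v\Phi$ and Proposition~\ref{13.1.16.2.51}. The only quibble is notational: the rank structure of $M_1=v^*G_{0,1}v$ comes from $g_{\alpha,1}=-|\mathbf n^{(\alpha)}\rangle\langle\mathbf n^{(\alpha)}|$, i.e.\ the vectors $v^*\mathbf n^{(\alpha)}$ rather than $v^*\mathbf 1$, as your own citation of \eqref{G01} already indicates.
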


\begin{theorem}\label{thm-ex2}
Assume that the threshold $0$ is an exceptional point of the \emph{second kind},
and that Assumption~\ref{assumV} is fulfilled for some integer $\beta\ge 4$.
Then
\begin{equation}\label{expand-second}
R(\kappa)=\sum_{j=-2}^{\beta-6}\kappa^jG_j+\mathcal{O}(\kappa^{\beta-5})
\quad\text{in }
\mathcal B^{\beta-2}
\end{equation}
with $G_j\in\mathcal B^{j+3}$ for $j$ even,
and $G_j\in\mathcal B^{j+2}$ for $j$ odd.
The coefficients $G_j$ can be computed explicitly. In particular,
\begin{align}\label{ex2-G-2}
G_{-2}&=\mathsf P\neq 0,
\quad
G_{-1}=\mathcal P=0.
\end{align}
\end{theorem}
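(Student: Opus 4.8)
The plan is to follow the Jensen–Nenciu (JN) expansion scheme, adapted along the lines of \cite{IJ2}, and specialized to the second-kind threshold. Since $H_0$ has no zero eigenvalue or zero resonance, its resolvent $R_0(\kappa)$ is regular at $\kappa=0$ with the Taylor/asymptotic expansion \eqref{free-expan}, so the only singular behavior of $R(\kappa)$ comes from inverting $I+VR_0(\kappa)$ (or rather a symmetrized version). Writing $M(\kappa)=U+v^*R_0(\kappa)v$ with the factorization $V=vUv^*$, the resolvent identity gives $R(\kappa)=R_0(\kappa)-R_0(\kappa)vM(\kappa)^{-1}v^*R_0(\kappa)$, and $M(\kappa)$ has an asymptotic expansion $M(\kappa)=\sum_{j\ge0}\kappa^j M_j$ in $\mathcal B(\mathcal K)$, where $M_0=U+v^*G_{0,0}v$. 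The core of the proof is an iterated Feshbach/Grushin reduction: at each stage one identifies the kernel of the leading operator, projects onto it, and inverts on the complement, producing one more negative power of $\kappa$ per step. The exceptional point of the second kind is precisely the case where this procedure must be carried out until a $\kappa^{-2}$ singularity emerges but no $\kappa^{-1}$ term survives — matching $\mathcal E=\mathsf E\supsetneq\{0\}$.

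First I would record that $M_0=U+v^*G_{0,0}v$ is self-adjoint on $\mathcal K$ and Fredholm (indeed $v^*G_{0,0}v$ is compact by Assumption~\ref{assumV}), and establish the standard identification $\ker M_0\cong\widetilde{\mathcal E}$ via $\Psi=-R_0(\kappa\to0)v\,(\cdot)$, more precisely via the map sending $x\in\ker M_0$ to $\Psi=-G_{0,0}vx\in(\mathcal L^\beta)^*$; one checks $H\Psi=0$ using \eqref{18082023a}. Under this identification the finer spaces $\mathcal E$ and $\mathsf E$ correspond to subspaces of $\ker M_0$ cut out by orthogonality conditions against the ``$\mathbf n$'' and ``$\mathbf 1$'' asymptotic coefficients, which are themselves expressible through $v^*G_{0,1}v$ and $v^*\mathbf 1^{(\alpha)}$-type pairings (this is where \eqref{13.3.7.13.48}, \eqref{170802} and the orthogonality \eqref{1708221320} enter). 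In the second-kind case $\widetilde{\mathcal E}/\mathcal E$ has dimension $N$ (all of it), $\mathcal E=\mathsf E$, and $\dim\mathsf E\ge1$. I would then run the first Feshbach step with the Riesz projection $Q_0$ onto $\ker M_0$: the reduced operator $m_1(\kappa)=\tfrac1\kappa Q_0(M(\kappa)-M(\kappa)\bar Q_0(\bar Q_0 M(\kappa)\bar Q_0)^{-1}\bar Q_0 M(\kappa))Q_0$ is analyzed, its leading term $Q_0 M_1 Q_0$ computed, and because the non-resonant directions (the $\mathbf n$-part) carry a genuinely invertible piece at order $\kappa^1$ one peels off exactly the part of $\ker M_0$ that is \emph{not} in $\mathcal E$; the remaining kernel is $\cong\mathcal E=\mathsf E$, and the next reduction produces the $\kappa^{-2}$ pole with residue the bound projection.

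The key computations then are: (i) $G_{-2}=\mathsf P$. This follows by tracking the innermost Grushin inverse: the $\kappa^{-2}$ coefficient of $M(\kappa)^{-1}$ is $S$, the orthogonal projection onto the final kernel, and pushing it back through $R_0(\kappa)v(\cdot)v^*R_0(\kappa)$ with $R_0\to G_{0,0}$ gives $\sum_\gamma|G_{0,0}vx_\gamma\rangle\langle G_{0,0}vx_\gamma|=\sum_\gamma|\Psi_\gamma\rangle\langle\Psi_\gamma|=\mathsf P$ for a suitable basis; the normalization $\langle\Psi_{\gamma'},\Psi_\gamma\rangle=\delta_{\gamma\gamma'}$ comes from choosing $\{x_\gamma\}$ so that the inner-product form induced on the final kernel is the identity, which is exactly the content of Appendix~\ref{170818} (Theorems~\ref{17082015}, \ref{17082017}). (ii) $G_{-1}=\mathcal P=0$. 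Since $\mathcal E=\mathsf E$ there is no intermediate ``resonance-only'' kernel, so the $\kappa^{-1}$ coefficient in the expansion of $M(\kappa)^{-1}$ vanishes after the reductions, whence $\mathcal P=0$ and $G_{-1}=0$; this requires showing that the order-$\kappa^1$ contribution in the last Feshbach step is nonsingular on the final kernel, which uses \eqref{18082023a} (so that $G_{0,1}$ pairs trivially with bound states) together with the definition of $\mathsf E$ as $\widetilde{\mathcal E}\cap\mathcal L^{\beta-2}$.

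The main obstacle, and the reason $\beta\ge4$ is needed here, is \textbf{bookkeeping of the mapping properties and remainder orders through two nested Grushin reductions}: one must verify at each stage that $G_{0,j}$ and the intermediate operators land in the correct $\mathcal B^s$ spaces so that the composed remainder is genuinely $\mathcal O(\kappa^{\beta-5})$ in $\mathcal B^{\beta-2}$, and that the non-decaying generalized eigenfunctions (the $\mathbf n$- and $\mathbf 1$-parts) are handled in the dual spaces $(\mathcal L^s)^*$ without spurious divergences — the same delicate point that forces the weight shifts in \eqref{13.3.7.13.48}. The algebra of identifying $\ker M_0\leftrightarrow\widetilde{\mathcal E}$ and its graded pieces with $\mathcal E,\mathsf E$ is routine once the asymptotics of Section~\ref{12.12.19.2.5} are in hand, and the appearance of $\mathsf P$ as the residue is forced by self-adjointness; the real work is the careful, case-specific truncation of the expansion, which is why the detailed coefficient computations are deferred to Appendix~\ref{170818} and the proof in Section~\ref{1608216} proceeds type by type.
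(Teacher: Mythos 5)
Your overall strategy --- the Jensen--Nenciu inversion scheme applied to $M(\kappa)=U+v^*R_0(\kappa)v$ through the second resolvent identity \eqref{second-resolvent} --- is the paper's, but your account of how the second-kind singularity arises rests on a misidentification and omits the step that actually fixes the order of the pole. The identification $\ker M_0\cong\widetilde{\mathcal E}$ is not correct: by Proposition~\ref{13.1.16.2.51} the map $z$ sends $Q\mathcal K=\ker M_0$ onto $\mathcal E$, while the non-resonance ($\mathbf n$-type) eigenfunctions come from $M_0^\dagger\bigl[P\mathcal K\cap(Q\mathcal K)^\perp\bigr]$ and $\mathbb C\mathbf n\cap\ker V$, i.e.\ from \emph{outside} the kernel. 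Hence in the second-kind case there is no ``part of $\ker M_0$ not in $\mathcal E$'' to peel off at order $\kappa$: the defining feature of this case is $Qv^*\mathbf n^{(\alpha)}=0$ for all $\alpha$, which forces the first reduced operator $m_0=QM_1Q$ to vanish \emph{identically} (and likewise $m_2=0$). The $\kappa^{-2}$ pole therefore comes from a \emph{single} application of the inversion formula, since $m(\kappa)=\kappa\,QM_2Q+\mathcal O(\kappa^2)$ and the formula \eqref{16082255} carries an explicit extra factor $\kappa^{-1}$; the two-stage peeling you describe (an invertible piece at order $\kappa$ on the non-bound directions, then a second reduction) is the mechanism of the \emph{third} kind, not the second. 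The vanishing $QM_1=M_1Q=0$ is also exactly what kills the cross terms $G_{0,0}vC_{-2}v^*G_{0,1}$, etc., and yields $G_{-1}=0$, so getting this structure right is not cosmetic.

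Two further ingredients are missing. First, nothing in your argument shows the expansion terminates at $\kappa^{-2}$: you must prove that $m_1=QM_2Q$ is invertible in $\mathcal B(Q\mathcal K)$, and this does not follow from the Feshbach algebra alone. The paper obtains it by contradiction with the a priori bound $\|R(\kappa)\|_{\mathcal B(\mathcal H)}\le\bigl[\operatorname{dist}(-\kappa^2,\sigma(H))\bigr]^{-1}$, since a non-invertible $m_1$ would force another application of the inversion formula and a pole of order at least $3$. Second, to upgrade ``$G_{-2}$ is self-adjoint with range in $\mathsf E$'' to $G_{-2}=\mathsf P$ you need the quadratic identity $\langle u_2,G_{0,2}u_1\rangle=-\langle G_{0,0}u_2,G_{0,0}u_1\rangle$ of Lemma~\ref{lemma24}, valid under the moment conditions $\langle\mathbf n^{(\alpha)},u_1\rangle=0$ guaranteed by $Qv^*\mathbf n^{(\alpha)}=0$; it is this identity that converts $-z(Qv^*G_{0,2}vQ)^{\dagger}z^*$ into the orthogonal projection for the $\ell^2$ inner product. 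Your appeal to ``the content of Appendix~\ref{170818} (Theorems~\ref{17082015}, \ref{17082017})'' points to the computations of $G_0$ and $G_1$ for the first and third kinds, which is not where this normalization is established.
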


\begin{theorem}\label{thm-ex3}
Assume that the threshold $0$ is an exceptional point of the \emph{third
kind},
and that Assumption~\ref{assumV} is fulfilled for some integer $\beta\ge 4$.
Then
\begin{equation}\label{expand-third}
R(\kappa)=\sum_{j=-2}^{\beta-6}\kappa^jG_j+\mathcal{O}(\kappa^{\beta-5})
\quad
\text{in }\mathcal B^{\beta-2}
\end{equation}
with $G_j\in\mathcal B^{j+3}$ for $j$ even,
and $G_j\in\mathcal B^{j+2}$ for $j$ odd.
The coefficients $G_j$ can be computed explicitly. In particular,
\begin{align}
G_{-2}=\mathsf P\neq 0,
\quad
G_{-1}=\mathcal P\neq 0
.\label{G-1}
\end{align}
\end{theorem}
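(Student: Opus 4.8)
The plan is to follow the general expansion scheme of \cite{JN,JNerr} as implemented in \cite{IJ1,IJ2}, but now starting from the clean free resolvent expansion \eqref{free-expan}, which has no singular part. The starting point is the symmetrized resolvent identity: writing $V=vUv^{*}$ as in Assumption~\ref{assumV}, one has for $\operatorname{Re}\kappa>0$
\begin{align*}
R(\kappa)=R_0(\kappa)-R_0(\kappa)v\bigl(U+v^{*}R_0(\kappa)v\bigr)^{-1}v^{*}R_0(\kappa),
\end{align*}
so the problem reduces to inverting $M(\kappa)=U+v^{*}R_0(\kappa)v$ on $\mathcal K$ as $\kappa\to0$. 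By \eqref{free-expan} and $\beta\ge 4$, $M(\kappa)$ has an expansion $M(\kappa)=M_0+\kappa M_1+\kappa^2 M_2+\cdots$ with $M_0=U+v^{*}G_{0,0}v$; the coefficients $M_j$ inherit the mapping properties from the $G_{0,j}$. Since $H_0$ has no threshold eigenvalue or resonance, $M_0$ is Fredholm of index $0$, but for an exceptional point of the third kind it is \emph{not} invertible, and the kernel of $M_0$ decomposes according to the two eigenspaces.

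Next I would carry out the inversion of $M(\kappa)$ by the iterated Feshbach (Grushin) reduction of \cite[Lemma~2.3 and the surrounding discussion]{JN}. First step: let $Q$ be the Riesz projection onto $\ker M_0$ and invert $M_0$ on the complement; this produces $M^{(1)}(\kappa)=Q\widetilde M(\kappa)Q$ on $Q\mathcal K$ with $M^{(1)}(\kappa)=\kappa m_1^{(1)}+\kappa^2 m_2^{(1)}+\cdots$. The crucial algebraic fact — exactly as in \cite{IJ1,IJ2} — is that the relevant coefficients $m_1^{(1)}$, $m_2^{(1)}$ are expressed through the canonical zero eigenfunctions: one identifies $QM_0Q=0$ on the part of $\ker M_0$ corresponding to $\widetilde{\mathcal E}$, the $\kappa^1$ term picks out the \emph{resonance} part (via the coefficient $g_{\alpha,1}[n,m]=-n\cdot m$, hence rank-one-per-ray contributions built from $\mathbf n^{(\alpha)}$, $\mathbf 1^{(\alpha)}$), and the $\kappa^2$ term picks out the \emph{bound} part. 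For the third kind, $\mathcal E\supsetneq\mathsf E\supsetneq\{0\}$, so both the odd and the next even reduction are nontrivial, and iterating the Grushin procedure twice more isolates a block that is invertible at leading order $\kappa^{-2}$. Reassembling, $M(\kappa)^{-1}$ acquires a $\kappa^{-2}$ term whose $v(\cdot)v^{*}$-conjugate equals $-\mathsf P$, and a $\kappa^{-1}$ term yielding $-\mathcal P$; substituting back into the resolvent identity, using $R_0(\kappa)v=G_{0,0}v+\kappa G_{0,1}v+\cdots$ and the relations \eqref{18082023a}, gives \eqref{expand-third} with $G_{-2}=\mathsf P$ and $G_{-1}=\mathcal P$, both nonzero by the definition of the third kind. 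The condition $\beta\ge4$ is what guarantees enough terms in \eqref{free-expan} to reach the $\kappa^{-2}$ singularity and still control the remainder in $\mathcal B^{\beta-2}$; the order of the error, $\mathcal O(\kappa^{\beta-5})$, comes from losing three powers ($\kappa^{-2}$ from inversion, and the bookkeeping of the two flanking $R_0(\kappa)$'s against the gain in weights).

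The identification of $G_{-2}$ with $\mathsf P$ and $G_{-1}$ with $\mathcal P$ as the \emph{orthogonal} projections of Definitions~\ref{170819} and the bound-basis definition requires Section~\ref{12.12.19.2.5}: one needs that every element of $\ker M_0$ (modulo the invertible part) corresponds bijectively to a generalized eigenfunction $\Psi=-R_0(0)v\phi$ (interpreted via $G_{0,0}$ and the non-$\ell^2$ tails $\mathbf n^{(\alpha)},\mathbf 1^{(\alpha)}$), that the asymptotic coefficients $c^{(\gamma)}\in\mathbb C^N$ read off from this correspondence match the pairing $\langle\cdot,\cdot\rangle$ and the orthogonality relation \eqref{1708221320}, and that the quadratic forms produced by $m_1^{(1)},m_2^{(1)}$ are exactly $\sum\overline c_\alpha^{(\gamma')}c_\alpha^{(\gamma)}$ respectively $\langle\Psi_{\gamma'},\Psi_\gamma\rangle$. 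Granting those structural results (Proposition~\ref{13.1.16.2.51}, the dimensional relation \eqref{170802}, and Theorems~\ref{17082015}, \ref{17082017}), the projections come out basis-independent and orthogonal as claimed.

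The main obstacle I anticipate is the bookkeeping in the \emph{third} kind specifically: unlike the first or second kind, here one must run the Grushin reduction through \emph{both} a nontrivial odd stage (resonance) and a nontrivial even stage (bound) while keeping the two blocks decoupled to leading order — and the decoupling is not automatic, it rests precisely on the orthogonality \eqref{1708221320} between the $\mathbf n$-asymptotics and the $\mathbf 1$-asymptotics, i.e.\ on the fact that the resonance and bound sectors are mutually orthogonal with respect to the relevant pairing. Verifying that this orthogonality survives each reduction step (so that no unwanted cross terms spoil the leading $\kappa^{-2}$ coefficient) is the delicate point, and it is where the explicit kernels \eqref{G00}--\eqref{G03} and the relations \eqref{18082023}, \eqref{18082023a} are used in an essential way. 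The remaining work — plugging the inverse of $M(\kappa)$ back into the resolvent identity and collecting powers of $\kappa$ to get the explicit $G_j$ — is long but routine, and is deferred to Section~\ref{1608216} and Appendix~\ref{170818}.
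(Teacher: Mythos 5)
Your overall route is the same as the paper's: the symmetrized resolvent identity reduces everything to inverting $M(\kappa)=U+v^*R_0(\kappa)v$, and the inversion is carried out by the iterated Jensen--Nenciu reduction (your ``Grushin'' steps), first with $Q$ projecting onto $\mathop{\mathrm{Ker}}M_0$ to produce $m(\kappa)$ with $m_0=QM_1Q$, then with $S$ projecting onto $\mathop{\mathrm{Ker}}m_0\subset Q\mathcal K$ to produce $q(\kappa)$ with $q_0=SM_2S$. Your assignment of the resonance sector to the odd ($\kappa^1$) stage and the bound sector to the even ($\kappa^2$) stage is also correct, and deferring the identification $G_{-2}=\mathsf P$, $G_{-1}=\mathcal P$ to Proposition~\ref{13.1.16.2.51} and Lemma~\ref{lemma24} is legitimate.

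There is, however, one genuine gap: you assert that iterating the reduction ``isolates a block that is invertible at leading order $\kappa^{-2}$,'' but you never justify why the final reduced operator $q_0=Sv^*G_{0,2}vS$ is invertible on $S\mathcal K$, i.e.\ why the scheme terminates at two reductions rather than producing a $\kappa^{-3}$ or worse singularity. This is not automatic from the structure of the eigenspaces; the paper closes it with the a priori bound $\|R(\kappa)\|_{\mathcal B(\mathcal H)}\le[\mathop{\mathrm{dist}}(-\kappa^2,\sigma(H))]^{-1}$ (see \eqref{170829}): if $q_0$ were not invertible, one further application of the inversion formula would force a singularity of order $\kappa^{-j}$ with $j\ge 3$, contradicting that bound. (The same device is already needed at the second stage to know $m_0$ restricted to $(S\mathcal K)^\perp\cap Q\mathcal K$ behaves, and in the second-kind case to know $m_1$ is invertible.) Relatedly, your phrase ``iterating the Grushin procedure twice more'' suggests three reductions in total; only two are performed, and a third would be inconsistent with the claimed $\kappa^{-2}$ leading singularity. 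Finally, for the decoupling of the two sectors the operative identity in the paper is $Sv^*\mathbf n^{(\alpha)}=0$ (a consequence of $Sm_0=0$ and the explicit rank-$N$ form \eqref{160817219} of $m_0$), from which the cross terms in $G_{-2}$ and $G_{-1}$ are killed and Lemma~\ref{lemma24} converts the $G_{0,2}$ pairings into $G_{0,0}$ pairings; the orthogonality \eqref{1708221320} is an output of this computation rather than an input, so you should not rely on it to establish the decoupling.
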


Theorems~\ref{thm-reg}--\ref{thm-ex3}
justify the classification of threshold types only by the 
growth properties of eigenfunctions:

\begin{corollary}
The threshold type determines and is determined 
by the coefficients $G_{-2}$ and $G_{-1}$ from Theorems~\ref{thm-reg}--\ref{thm-ex3}.
\end{corollary}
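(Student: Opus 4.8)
The plan is to read off the corollary directly from the four theorems, whose content is precisely that the pair $(G_{-2},G_{-1})$ equals $(\mathsf P,\mathcal P)$ in every case, together with the fact that the vanishing/non-vanishing pattern of $(\mathsf P,\mathcal P)$ is a faithful encoding of Definition~\ref{def-reg-excp}. Concretely, I would argue in two directions.

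\textbf{Threshold type determines the coefficients.} Given the threshold type, one of Theorems~\ref{thm-reg}--\ref{thm-ex3} applies (under the corresponding hypothesis on $\beta$), and each of them asserts explicit identities $G_{-2}=\mathsf P$ and $G_{-1}=\mathcal P$, together with the exact pattern of vanishing: in the regular case both are $0$; for an exceptional point of the first kind $G_{-2}=0$ and $G_{-1}\neq0$; for the second kind $G_{-2}\neq0$ and $G_{-1}=0$; for the third kind both are nonzero. Since the orthogonal projections $\mathsf P$ and $\mathcal P$ are themselves intrinsically determined by the eigenspaces $\mathsf E$ and $\mathcal E/\mathsf E$ (and, as noted after the definitions, independent of the choice of orthonormal basis), the type determines $G_{-2}$ and $G_{-1}$ completely.

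\textbf{The coefficients determine the threshold type.} Conversely, suppose we are only told $G_{-2}$ and $G_{-1}$. By Theorems~\ref{thm-reg}--\ref{thm-ex3} we still have $G_{-2}=\mathsf P$ and $G_{-1}=\mathcal P$ in all cases, so we know $\mathsf P$ and $\mathcal P$. Now $\mathsf P=0$ if and only if $\mathsf E=\{0\}$, and $\mathcal P=0$ if and only if $\mathcal E/\mathsf E=\{0\}$, i.e.\ $\mathcal E=\mathsf E$; this is because $\mathsf P$ and $\mathcal P$ are the orthogonal projections built from a bound basis and a resonance basis respectively, so each vanishes exactly when the corresponding (quotient) space is trivial. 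Matching the four possible Boolean values of $(\mathsf E\overset{?}{=}\{0\},\ \mathcal E\overset{?}{=}\mathsf E)$ against Definition~\ref{def-reg-excp} recovers the threshold type uniquely: $(\text{yes},\text{yes})$ is regular, $(\text{yes},\text{no})$ is first kind, $(\text{no},\text{yes})$ is second kind, $(\text{no},\text{no})$ is third kind. Hence the type is determined by the pair $(G_{-2},G_{-1})$.

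I do not expect any genuine obstacle here: the corollary is a bookkeeping consequence of the four main theorems, the only mildly delicate point being the elementary observation that an orthogonal projection onto a subspace is zero precisely when that subspace is trivial, which uses that the resonance/bound bases in Definitions~\ref{170819} and the following one are honest bases of $\mathcal E/\mathsf E$ and $\mathsf E$. One could add a half-sentence remark that, since every function in $\widetilde{\mathcal E}$ is covered by exactly one of the four mutually exclusive and exhaustive cases of Definition~\ref{def-reg-excp}, the correspondence type $\leftrightarrow$ $(G_{-2},G_{-1})$ is a genuine bijection onto the four admissible value-pairs, which is what "determines and is determined" is meant to convey.
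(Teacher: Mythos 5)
Your proposal is correct and is exactly the argument the paper intends: the paper's own proof consists of the single sentence that the assertion is obvious from Theorems~\ref{thm-reg}--\ref{thm-ex3}, and your write-up simply makes explicit the bookkeeping (the vanishing pattern of $(\mathsf P,\mathcal P)$ faithfully encoding the four cases of Definition~\ref{def-reg-excp}) that the authors leave implicit. No gaps.
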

\begin{proof}
The assertion is obvious by Theorems~\ref{thm-reg}--\ref{thm-ex3}.
\end{proof}

The higher order coefficients $G_0,G_1,\ldots$ are much more complicated.
We postpone the expressions for $G_0$ and $G_1$ to Appendix~\ref{170818},
since their proofs are also very long.
Here let us only note some well-known relations 
that once the existence of an expansion is guaranteed, follow easily from the general theory.

\begin{corollary}\label{17081919}
The coefficients $G_j$ from Theorems~\ref{thm-reg}--\ref{thm-ex3} satisfy
\begin{align*}
HG_j&=G_jH=0\quad\text{for }j=-2,-1,\\
HG_0&=G_0H=I-\mathsf P,\\
HG_j&=G_jH=-G_{j-2}\quad\text{for }j\ge 1.
\end{align*}
\end{corollary}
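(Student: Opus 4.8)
The plan is to derive these algebraic identities directly from the expansion formula $R(\kappa)=(H+\kappa^2)^{-1}$ together with the resolvent identity, rather than from the explicit formulas for the coefficients. The starting point is the observation that for $-\kappa^2\notin\sigma(H)$ one has $(H+\kappa^2)R(\kappa)=R(\kappa)(H+\kappa^2)=I$ as operators. Since $H=H_0+V$ is bounded on $\mathcal H$, and by Assumption~\ref{assumV} actually maps $(\mathcal L^\beta)^*\to\mathcal L^\beta$ in its perturbative part, one checks that $H$ extends to a bounded map $(\mathcal L^s)^*\to(\mathcal L^{s-2})^*$ for appropriate $s$ (the Laplacian part raises weights by $2$, and $V$ is smoothing); in particular $H$ acts continuously on the scale of spaces appearing in each of Theorems~\ref{thm-reg}--\ref{thm-ex3}, so that $HG_j$ makes sense as a bounded operator between the relevant weighted spaces.

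First I would substitute the asymptotic expansion $R(\kappa)=\sum_j\kappa^j G_j+\mathcal O(\kappa^{\nu+1})$ into the identity $(H+\kappa^2)R(\kappa)=I$. Expanding the left-hand side in powers of $\kappa$ gives $\sum_j\kappa^j HG_j+\sum_j\kappa^{j+2}G_j$, and matching coefficients of like powers of $\kappa$ against the right-hand side $I$ (which contributes only to the $\kappa^0$ term) yields: the coefficient of $\kappa^{-2}$ gives $HG_{-2}=0$; the coefficient of $\kappa^{-1}$ gives $HG_{-1}=0$; the coefficient of $\kappa^0$ gives $HG_0+G_{-2}=I$; and the coefficient of $\kappa^j$ for $j\ge1$ gives $HG_j+G_{j-2}=0$. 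Using $G_{-2}=\mathsf P$ from the statements of the theorems, the $\kappa^0$ relation becomes $HG_0=I-\mathsf P$. The identities with $H$ acting on the right, i.e.\ $G_jH$, follow symmetrically from $R(\kappa)(H+\kappa^2)=I$, or alternatively by taking formal adjoints using self-adjointness of $H$ and the symmetry of the coefficients.

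The one genuinely delicate point — and the step I expect to be the main obstacle — is justifying that one may match coefficients term by term, i.e.\ that the error term $\mathcal O(\kappa^{\nu+1})$ in the operator norm on the stated weighted space $\mathcal B^{\beta-2}$ (or $\mathcal B^{\beta-1}$) is preserved after applying the unbounded-on-$\mathcal H$ but bounded-on-the-scale operator $H$, and that the expansion $(H+\kappa^2)R(\kappa)$ obtained this way is still a valid asymptotic expansion whose uniqueness lets us read off coefficients. Concretely, one must check that $H\colon(\mathcal L^{\beta-2})^*\to(\mathcal L^{\beta-2})^{*}$ — or into a slightly larger space — is bounded, so that $H\cdot\mathcal O(\kappa^{\nu+1})=\mathcal O(\kappa^{\nu+1})$ in the corresponding $\mathcal B^s$-norm, and that the identity $(H+\kappa^2)R(\kappa)=I$ genuinely holds between these dual spaces for small $\kappa>0$ (which follows since $R(\kappa)$ maps $\mathcal L^{\beta-2}$ into $(\mathcal L^{\beta-2})^*$ by the theorems and $H$ then maps back). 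Once this mapping-property bookkeeping is in place, the matching of coefficients is immediate from the uniqueness of asymptotic expansions in $\kappa$.

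Finally, I would note that the relations $HG_{-2}=G_{-2}H=0$ and $HG_{-1}=G_{-1}H=0$ are also consistent with the independently established facts $G_{-2}=\mathsf P$ and $G_{-1}=\mathcal P$, since $\mathsf P$ projects onto $\mathsf E\subset\widetilde{\mathcal E}=\ker H|_{(\mathcal L^\beta)^*}$ and, by Definition~\ref{170819} and the characterization of $\mathcal E$, $\mathcal P$ has range in $\mathcal E\subset\widetilde{\mathcal E}$ as well; this gives an alternative route to the first two identities and serves as a consistency check, but the coefficient-matching argument above is the cleanest and covers all four cases uniformly. Thus the corollary follows directly from the expansions in Theorems~\ref{thm-reg}--\ref{thm-ex3} and the resolvent identity, with essentially no computation.
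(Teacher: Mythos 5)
Your proposal is correct and is essentially the paper's own argument: the paper disposes of this corollary by citing \cite[Corollary~III.8]{IJ2}, whose content is precisely the coefficient matching in $(H+\kappa^2)R(\kappa)=R(\kappa)(H+\kappa^2)=I$ that you carry out on the weighted-space scale, combined with $G_{-2}=\mathsf P$ from Theorems~\ref{thm-reg}--\ref{thm-ex3}. One small correction that does not affect the argument: $H_0$ does not map $(\mathcal L^s)^*$ into $(\mathcal L^{s-2})^*$ (test with $u[n]=(-1)^n n^s$, for which $H_0u$ is still of order $n^s$), but as a nearest-neighbour difference operator it is bounded on each $(\mathcal L^s)^*$, and that weaker property, together with $V\colon(\mathcal L^\beta)^*\to\mathcal L^\beta$, is all your coefficient-matching step actually requires.
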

\begin{proof}
See \cite[Corollary~I\hspace{-.1em}I\hspace{-.1em}I.8]{IJ2}.
\end{proof}

We shall prove Theorems~\ref{thm-reg}--\ref{thm-ex3}
in Sections~\ref{12.12.19.2.5} and \ref{1608216}, 
following the arguments of \cite{IJ1,IJ2}.

\section{Analysis of eigenfunctions}\label{12.12.19.2.5}

In this section we investigate the eigenspaces 
$\widetilde{\mathcal E}$, $\mathcal E$ and $\mathsf E$ in detail.
The main results of this section are stated in Proposition~\ref{13.1.16.2.51} and 
Corollary~\ref{160616438}. 
In Proposition~\ref{13.1.16.2.51} we provide $\mathcal K$-space characterizations
of the eigenspaces, and also precise asymptotics of eigenfunctions.
Corollary~\ref{160616438} rephrases a part of Proposition~\ref{13.1.16.2.51}
in terms of the \textit{intermediate operators} in the terminology of \cite{IJ1},
which more directly connects the threshold types to coefficients of the resolvent expansion.
In  later sections these results will be employed as effective tools 
in both implementing and analyzing the resolvent expansions.

\subsection{$\mathcal K$-space characterization and asymptotics}
Let us inductively define $\Psi^{(\alpha)}\in (\mathcal L^1)^*$, $\alpha=1,\dots,N$,
as $\widetilde\Psi^{(1)}=\mathbf n^{(1)}$ and 
\begin{align}
\Psi^{(\alpha)}=\|v^*\widetilde \Psi^{(\alpha)}\|^\dagger\widetilde\Psi^{(\alpha)}
,\quad
\widetilde\Psi^{(\alpha)}=\mathbf n^{(\alpha)}
-\sum_{\gamma=1}^{\alpha-1}
\bigl\langle v^*\Psi^{(\gamma)},v^*\mathbf n^{(\alpha)} \bigr\rangle \Psi^{(\gamma)},
\label{170807}
\end{align}
where $a^{\dagger}$ denotes 
the \textit{pseudo-inverse} of $a\in\mathbb C$, see e.g.\ \cite[Appendix]{IJ2},
and set 
\begin{align}
P=\sum_{\alpha=1}^N|\Phi^{(\alpha)}\rangle\langle \Phi^{(\alpha)}|,\quad 
\Phi^{(\alpha)}=v^*\Psi^{(\alpha)}\in\mathcal K.
\label{17080719}
\end{align}
Obviously $P\in\mathcal B(\mathcal K)$ is the orthogonal projection onto
the subspace 
$$\mathbb Cv^*\mathbf n^{(1)}+\dots+\mathbb C v^*\mathbf n^{(N)}\subset \mathcal K,$$
cf.\ the Gram--Schmidt process.  
Note that the above summands may be linearly dependent.
Let $M_0\in\mathcal B(\mathcal K)$ be the operator defined as
\begin{align}
M_0=U+v^*G_{0,0}v\in\mathcal B(\mathcal K),
\label{160817218}
\end{align}
and $Q\in\mathcal B(\mathcal K)$ 
the orthogonal projection onto 
$\mathop{\mathrm{Ker}}M_0$. 
We denote the \textit{pseudo-inverse} of $M_0$ 
by $M_0^\dagger$, see e.g.\ \cite[Appendix]{IJ2}.
In addition, we define the operators
\begin{align}
w=Uv^*
,\quad 
z=\sum_{\alpha=1}^N
|\Psi^{(\alpha)}\rangle
\langle \Phi^{(\alpha)}| M_0 
-G_{0,0}v
\label{12.12.27.16.4}
\end{align}
initially as $w\in\mathcal B\bigl((\mathcal L^\beta)^*,\mathcal K\bigr)$,  
$z\in \mathcal B\bigl(\mathcal K, (\mathcal L^1)^*\bigr)$, respectively.  
We actually define the operators $w$ and $z$ 
by the restrictions given in Proposition~\ref{13.1.16.2.51} below,
and will denote the restrictions by the same notation $w$ and $z$, respectively.

We now state the main results of this section.

\begin{proposition}\label{13.1.16.2.51}
Suppose that $\beta\ge 1$ in Assumption~\ref{assumV}.
Then the operator $z$ gives a well-defined injection 
\begin{align}
z\colon Q\mathcal K\oplus M_0^\dagger \bigl[P\mathcal K\cap (Q\mathcal K)^\perp\bigr]\to\widetilde{\mathcal E},
\label{17080315}
\end{align}
and the eigenspaces are expressed as 
\begin{align}
\widetilde{\mathcal E}
&=
z\bigl(Q\mathcal K\oplus M_0^\dagger \bigl[P\mathcal K\cap (Q\mathcal K)^\perp\bigr]\bigr)
\oplus
\bigl(\mathbb C\mathbf n \cap \mathop{\mathrm{Ker}} V\bigr),
\label{12.12.19.6.56}\\
\mathcal E
&=
z(Q\mathcal K)
,
\label{12.12.19.6.57}\\
\mathsf E&
=z\bigl(Q\mathcal K\cap (P\mathcal K)^\perp\bigr)
.
\label{12.12.19.6.58}
\end{align}
In addition, the operator $w$ also gives a well-defined surjection
\begin{align}
w&\colon \widetilde{\mathcal E}\to Q\mathcal K\oplus M_0^\dagger \bigl[P\mathcal K\cap (Q\mathcal K)^\perp\bigr],
\label{17080316}
\end{align}
and the compositions $zw$ and $wz$ are expressed as 
\begin{align}
wz=I,\quad 
zw=\Pi,
\label{12.11.26.19.10}
\end{align}
where $I$ is the identity map on  
$Q\mathcal K\oplus M_0^\dagger [P\mathcal K\cap (Q\mathcal K)^\perp]$, 
and $\Pi\colon 
\widetilde{\mathcal E}
\to 
z\bigl(Q\mathcal K\oplus M_0^\dagger \bigl[P\mathcal K\cap (Q\mathcal K)^\perp\bigr]\bigr)$ is the 
projection associated with the decomposition \eqref{12.12.19.6.56}.
In particular, the dimensional relation \eqref{170802} holds,
and any projected eigenfunction $\Psi\in \Pi (\widetilde{\mathcal E})$ 
has the asymptotics
\begin{align}
\Psi-\sum_{\alpha=1}^N
\Bigl[\bigl\langle v^*\Psi^{(\alpha)},v^*(1+G_{0,0}V)\Psi\bigr\rangle \Psi^{(\alpha)} 
-\bigl\langle \mathbf n^{(\alpha)},V\Psi\bigr\rangle\mathbf 1^{(\alpha)}\Bigr]\in \mathcal L^{\beta-2}
.
\label{170804}
\end{align}
\end{proposition}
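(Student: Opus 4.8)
The plan is to reduce the eigen-equation $H\Psi=0$ for $\Psi\in(\mathcal{L}^\beta)^*$ to an equation on the auxiliary space $\mathcal K$, using the Grushin (Feshbach–Schur) method of \cite{JN,IJ1,IJ2} applied to the \emph{free} operator $H_0$, whose resolvent expansion \eqref{free-expan} has no singular part. Writing $V=vUv^*$ with $v$ injective, the equation $(H_0+V)\Psi=0$ together with $U^2=I$ is formally equivalent to $U\Phi+v^*G_{0,0}v\Phi=\text{(correction)}$ where $\Phi=Uv^*\Psi\in\mathcal K$, but because $H_0$ is not invertible on all of $(\mathcal{L}^\beta)^*$ (it has continuous spectrum down to $0$, and the inverse only acts modulo the slowly growing sequences $\mathbf n^{(\alpha)}$), the naive identity $\Psi=-G_{0,0}v\Phi$ must be augmented by a term in $\mathbb C\mathbf n$. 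This is exactly what the definition \eqref{12.12.27.16.4} of $z$ encodes: $z\Phi=\sum_\alpha|\Psi^{(\alpha)}\rangle\langle\Phi^{(\alpha)}|M_0\Phi-G_{0,0}v\Phi$, the first sum supplying the $\mathbb C\mathbf n$-component dictated by solvability on each ray. So the first step is to make precise, on the appropriate restricted domain, that $z$ maps into $(\mathcal{L}^1)^*$ and that $H_0 z\Phi = v\Phi$ modulo the relations $H_0\mathbf n^{(\alpha)}=0$ — i.e. $z$ is a right parametrix for $H_0$ adapted to $\mathcal K$.

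Next I would verify the claimed domain $Q\mathcal K\oplus M_0^\dagger[P\mathcal K\cap(Q\mathcal K)^\perp]$ is precisely the set of $\Phi$ for which $z\Phi\in\widetilde{\mathcal E}$. Feeding $\Psi=z\Phi$ into $H\Psi=0$ and applying $v^*$ (which is legitimate since $z\Phi\in(\mathcal{L}^1)^*\subset(\mathcal{L}^\beta)^*$ when $\beta\le 1$, and one works with the genuine restrictions in the higher-$\beta$ case), one collapses $H z\Phi=0$ to an algebraic condition of the form $M_0\Phi\in(\text{something involving }P)$; a short computation using $M_0=U+v^*G_{0,0}v$, the identity $w=Uv^*$, and the defining relation $\langle v^*\Psi^{(\gamma)},v^*\mathbf n^{(\alpha)}\rangle$ from the Gram–Schmidt construction \eqref{170807} shows that $\Phi\in Q\mathcal K\oplus M_0^\dagger[P\mathcal K\cap(Q\mathcal K)^\perp]$ is both necessary and sufficient, and that on this set $wz\Phi=\Phi$, giving the first half of \eqref{12.11.26.19.10} and the injectivity of $z$. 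Conversely, given $\Psi\in\widetilde{\mathcal E}$, one sets $\Phi=w\Psi=Uv^*\Psi$; one must check $\Phi$ lands in the stated space and that $\Psi-z\Phi\in\mathbb C\mathbf n\cap\mathrm{Ker}\,V$, which is the content of $zw=\Pi$ and simultaneously yields the decomposition \eqref{12.12.19.6.56} and the surjectivity of $w$. Then \eqref{12.12.19.6.57} and \eqref{12.12.19.6.58} follow by matching the decay of $z\Phi$: the $\mathbb C\mathbf n$-leading term of $z\Phi$ is $\sum_\alpha\langle\Phi^{(\alpha)},M_0\Phi\rangle\Psi^{(\alpha)}$, which vanishes exactly when $M_0\Phi\perp v^*\mathbf n$-span, i.e. when $\Phi\in(P\mathcal K)^\perp$ after accounting for $Q$; combined with the characterization of $\mathsf E$ as the further $\mathcal L^{\beta-2}$-decaying part, this pins down all three subspaces, and \eqref{170802} drops out by counting dimensions ($\dim\mathbb C\mathbf n=N$ minus the dimension of the linear-dependence kernel, matched against the $P$- and $Q$-ranks).

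For the asymptotics \eqref{170804}: starting from $\Psi=z\Phi$ with $\Phi=w\Psi$, substitute $\Phi^{(\alpha)}=v^*\Psi^{(\alpha)}$ and expand $G_{0,0}v\Phi$; the point is that $G_{0,0}v\Phi=G_{0,0}V\Psi$ up to the correction $G_{0,0}v(\Phi-v^*\Psi)=0$ since $v^*\Psi = U\Phi$ gives $v\Phi = vUv^*\Psi = V\Psi$. Then $z\Phi=\sum_\alpha|\Psi^{(\alpha)}\rangle\langle\Phi^{(\alpha)},M_0\Phi\rangle-G_{0,0}V\Psi$, and one rewrites $\langle\Phi^{(\alpha)},M_0\Phi\rangle=\langle v^*\Psi^{(\alpha)},(U+v^*G_{0,0}v)\Phi\rangle=\langle v^*\Psi^{(\alpha)},v^*\Psi+v^*G_{0,0}V\Psi\rangle=\langle v^*\Psi^{(\alpha)},v^*(1+G_{0,0}V)\Psi\rangle$, recovering the first sum in \eqref{170804}; the $\mathbf 1^{(\alpha)}$-terms come from the sub-leading asymptotics of $G_{0,0}v\Phi=G_{0,0}V\Psi$ on each ray — recall $g_{\alpha,0}[n,m]=n\wedge m$ from \eqref{G00}, so $(g_{\alpha,0}V\Psi)[n]=\sum_m (n\wedge m)(V\Psi)[m]=n\sum_{m}(V\Psi)[m]-\sum_m (m - n\wedge m)_+\cdots$, whose $O(1)$-in-$n$ part is $-\sum_m m\,(V\Psi)[m]=-\langle\mathbf n^{(\alpha)},V\Psi\rangle$ on $L_\alpha$, giving the $\mathbf 1^{(\alpha)}$-coefficient, with the genuine remainder lying in $\mathcal L^{\beta-2}$ because $V\Psi\in\mathcal L^\beta$. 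The main obstacle, and where care is needed, is the functional-analytic bookkeeping when $\beta\ge 2$: the operators $w$ and $z$ as written in \eqref{12.12.27.16.4} are only bounded between the coarse spaces $(\mathcal{L}^\beta)^*\!\to\mathcal K$ and $\mathcal K\to(\mathcal{L}^1)^*$, whereas the statement asserts they restrict to mutually inverse maps between $\widetilde{\mathcal E}$ (whose elements a priori decay like $\mathcal L^{\beta-2}$ after subtracting $\mathbb C\mathbf n\oplus\mathbb C\mathbf 1$, by \eqref{13.3.7.13.48}) and the $\mathcal K$-model space; establishing that these restrictions are well defined — i.e. that the formally-defined $z\Phi$ really has the finer asymptotics \eqref{170804} rather than merely lying in $(\mathcal{L}^1)^*$ — requires exploiting the higher-order terms $g_{\alpha,1},g_{\alpha,2},\dots$ of Proposition~\ref{prop12} and the decay $V\Psi\in\mathcal L^\beta$ in tandem, and is essentially an induction on $\beta$ paralleling \cite[Section~III]{IJ2}.
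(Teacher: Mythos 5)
Your proposal follows essentially the same route as the paper's proof: a direct verification that $z$ and $w$ are mutually inverse parametrix-type maps between the $\mathcal K$-model space $Q\mathcal K\oplus M_0^\dagger[P\mathcal K\cap(Q\mathcal K)^\perp]$ and $\widetilde{\mathcal E}$ modulo the complement $\mathbb C\mathbf n\cap\mathop{\mathrm{Ker}}V$, decay-matching of the leading $\mathbf n$- and $\mathbf 1$-coefficients of $z\Phi$ to identify $\mathcal E$ and $\mathsf E$, and a rank--nullity count for \eqref{170802}. The only notable difference is that the final step you flag as the main obstacle (and propose to settle by induction on $\beta$) is handled in the paper in one stroke by the explicit kernel formula for $G_{0,0}$ on each ray (Lemma~\ref{12.11.24.18.24}), which yields both the constant asymptote $\langle\mathbf n^{(\alpha)},\cdot\rangle$ and the $\mathcal L^{\beta-2}$ remainder directly, with no induction needed.
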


As a corollary, we have a more direct correspondence between the threshold types
and the invertibility of the \textit{intermediate operators} $M_0$ and $m_0$.
Let us define   
\begin{align}
m_0&
=-\sum_{\alpha=1}^N\bigl|Qv^*\mathbf n^{(\alpha)}\bigr\rangle\bigl\langle Qv^*\mathbf n^{(\alpha)}\bigr|.
\label{160817219}
\end{align}
We note that $m_0$ is defined as an operator in $\mathcal B(\mathcal K)$,
but it may also be considered as an operator in $\mathcal B(Q\mathcal K)$ naturally
since $m_0=0$ on $(Q\mathcal K)^\perp$.
The phrasing ``(not) invertible in $\mathcal B(Q\mathcal K)$'' below is meant in 
the latter sense.

\begin{corollary}\label{160616438}
Suppose that $\beta\ge 1$ in Assumption~\ref{assumV}.
\begin{enumerate}
\item\label{12.12.19.6.30}
The threshold $0$ is a regular point if and only if 
$M_0$ is invertible in $\mathcal B(\mathcal K)$.

\item\label{12.12.19.6.31}
The threshold $0$ is an exceptional point of the first kind 
if and only if 
$M_0$ is not invertible in $\mathcal B(\mathcal K)$
and $m_0$ is invertible in $\mathcal B(Q\mathcal K)$.

\item\label{12.12.19.6.32}
The threshold $0$ is an exceptional point of the second kind 
if and only if 
$M_0$ is not invertible in $\mathcal B(\mathcal K)$
and $m_0=0$.

\item\label{12.12.19.6.33}
The threshold $0$ is an exceptional point of the third kind 
if and only if 
$M_0$ and $m_0$ are not invertible in $\mathcal B(\mathcal K)$ and 
$\mathcal B(Q\mathcal K)$, respectively, and $m_0\neq 0$.
\end{enumerate}
\end{corollary}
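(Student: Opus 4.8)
The plan is to derive Corollary~\ref{160616438} directly from Proposition~\ref{13.1.16.2.51} by translating each clause of Definition~\ref{def-reg-excp} into a statement about the ranges of $z$ appearing in \eqref{12.12.19.6.56}--\eqref{12.12.19.6.58}, and then about the operators $M_0$ and $m_0$. Since $z$ is injective on $Q\mathcal K\oplus M_0^\dagger[P\mathcal K\cap(Q\mathcal K)^\perp]$, the dimensions of the eigenspaces are controlled by the dimensions of the corresponding subspaces of $\mathcal K$. Explicitly, from \eqref{12.12.19.6.57} one has $\mathcal E=\{0\}$ iff $Q\mathcal K=\{0\}$ iff $\mathop{\mathrm{Ker}}M_0=\{0\}$ iff $M_0$ is invertible in $\mathcal B(\mathcal K)$ (using that $\mathcal K$ may be infinite-dimensional, so one must note $M_0=U+v^*G_{0,0}v$ is a compact perturbation of the unitary $U$, hence Fredholm of index zero and thus injective iff invertible). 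This immediately gives clause~\eqref{12.12.19.6.30}, and moreover shows that in the remaining three clauses we always have $M_0$ not invertible, i.e.\ $Q\mathcal K\neq\{0\}$.

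Next I would analyze $\mathsf E$. By \eqref{12.12.19.6.58} and injectivity of $z$, $\mathsf E=\{0\}$ iff $Q\mathcal K\cap(P\mathcal K)^\perp=\{0\}$. The key observation is that $m_0$, defined in \eqref{160817219}, is (up to sign) the compression to $Q\mathcal K$ of the rank-$(\le N)$ positive operator $\sum_\alpha|Qv^*\mathbf n^{(\alpha)}\rangle\langle Qv^*\mathbf n^{(\alpha)}|$, whose range is $Q P\mathcal K$ (the image under $Q$ of the span of the $v^*\mathbf n^{(\alpha)}$); hence $\mathop{\mathrm{Ker}}(m_0|_{Q\mathcal K})=Q\mathcal K\cap(QP\mathcal K)^\perp$. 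I would check that $Q\mathcal K\cap(QP\mathcal K)^\perp=Q\mathcal K\cap(P\mathcal K)^\perp$ — this holds because for $\xi\in Q\mathcal K$, orthogonality to $QP\mathcal K$ means $\langle\xi,Q\eta\rangle=0$ for all $\eta\in P\mathcal K$, which since $Q\xi=\xi$ equals $\langle\xi,\eta\rangle=0$. Thus: $m_0$ invertible on $Q\mathcal K$ $\iff$ $Q\mathcal K\cap(P\mathcal K)^\perp=\{0\}$ $\iff$ $\mathsf E=\{0\}$; and $m_0=0$ on $Q\mathcal K$ $\iff$ $Q\mathcal K\subset(P\mathcal K)^\perp$, in which case $Q\mathcal K\cap(P\mathcal K)^\perp=Q\mathcal K$, and combined with \eqref{12.12.19.6.57}--\eqref{12.12.19.6.58} this gives $\mathcal E=z(Q\mathcal K)=\mathsf E$.

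With these equivalences in hand the four clauses fall out by combining the $\mathcal E$-characterization with the $\mathsf E$-characterization. For clause~\eqref{12.12.19.6.31}: $M_0$ not invertible gives $\mathcal E\supsetneq\{0\}$ but actually we need $\mathcal E\supsetneq\mathsf E=\{0\}$; $\mathsf E=\{0\}$ is $m_0$ invertible on $Q\mathcal K$, and then automatically $Q\mathcal K\neq\{0\}$ forces $\mathcal E\supsetneq\{0\}=\mathsf E$, matching ``exceptional of the first kind.'' For clause~\eqref{12.12.19.6.32}: $m_0=0$ (with $Q\mathcal K\neq\{0\}$, i.e.\ $M_0$ not invertible) gives $\mathcal E=\mathsf E\supsetneq\{0\}$, the second kind. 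For clause~\eqref{12.12.19.6.33}: $M_0$ and $m_0$ both not invertible with $m_0\neq0$ means $\{0\}\subsetneq\mathop{\mathrm{Ker}}(m_0|_{Q\mathcal K})\subsetneq Q\mathcal K$, i.e.\ $\{0\}\subsetneq Q\mathcal K\cap(P\mathcal K)^\perp\subsetneq Q\mathcal K$, hence $\{0\}\subsetneq\mathsf E\subsetneq\mathcal E$, the third kind. Since Definition~\ref{def-reg-excp} exhausts all possibilities for the pair $(\mathcal E,\mathsf E)$ and the conditions on $(M_0,m_0)$ are likewise mutually exclusive and exhaustive, each ``if and only if'' follows. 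The main obstacle I anticipate is the bookkeeping around $m_0$ being regarded sometimes in $\mathcal B(\mathcal K)$ and sometimes in $\mathcal B(Q\mathcal K)$, and making precise that ``$m_0$ invertible in $\mathcal B(Q\mathcal K)$'' is equivalent to injectivity there — again a Fredholm/compactness argument, since $m_0|_{Q\mathcal K}$ is a finite-rank (hence compact) self-adjoint operator, so on the possibly infinite-dimensional space $Q\mathcal K$ it is invertible only if $Q\mathcal K$ is finite-dimensional; one should remark that in that exceptional-first-kind case $\dim\mathcal E=\dim Q\mathcal K\le N<\infty$ is consistent with \eqref{170802}.
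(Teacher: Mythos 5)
Your proposal is correct and follows essentially the same route as the paper: both reduce the four threshold types, via Definition~\ref{def-reg-excp} and the expressions \eqref{12.12.19.6.56}--\eqref{12.12.19.6.58} of Proposition~\ref{13.1.16.2.51}, to the conditions $Q\mathcal K=\{0\}$, $Q\mathcal K\cap(P\mathcal K)^\perp=\{0\}$, and $Q\mathcal K\subset(P\mathcal K)^\perp$, and then translate these into invertibility or vanishing of $M_0$ and $m_0$ using $\mathop{\mathrm{Ker}}(m_0|_{Q\mathcal K})=Q\mathcal K\cap(P\mathcal K)^\perp$. The only difference is that you spell out the Fredholm/finite-rank details that the paper leaves implicit (and which are already covered by the compactness observation in Step~6 of the proof of Proposition~\ref{13.1.16.2.51}).
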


\subsection{Proofs}
In the remainder of this section 
we prove Proposition~\ref{13.1.16.2.51} and Corollary~\ref{160616438}.

\begin{lemma}\label{12.11.24.18.24}
For any $u\in\mathcal L^s$, $s\ge 1$,
the function $G_{0,0}u\in (\mathcal L^1)^*$ is expressed 
on $L_\alpha$ as 
\begin{align}
(G_{0,0}u)[n]
&=
\langle\mathbf n^{(\alpha)},u\rangle
-\sum_{m\in L_\alpha,\, m\ge n}(m-n)u[m]\quad \text{for } n\in L_\alpha.
\label{12.11.24.16.17}
\end{align}
In particular, $G_{0,0}$ is bounded as $\mathcal L^1\to (\mathcal L^0)^*$.
In addition, $G_{0,0}u\in \mathcal L^{s-2}$ holds 
if and only if $\langle \mathbf n^{(\alpha)},u\rangle=0$ for all $\alpha=1,\dots,N$.
\end{lemma}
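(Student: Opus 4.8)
The plan is to compute the action of $G_{0,0}$ on $\mathcal L^s$ directly from the block structure \eqref{170803}. Since $G_{0,0} = g_{0,0}\oplus g_{1,0}\oplus\cdots\oplus g_{N,0}$ and the component on $L_\alpha$ is $g_{\alpha,0}$ with kernel $g_{\alpha,0}[n,m] = n\wedge m$ by \eqref{G00}, for $u\in\mathcal L^s$ and $n\in L_\alpha$ I would simply write
\begin{align*}
(G_{0,0}u)[n] = \sum_{m\in L_\alpha}(n\wedge m)\,u[m]
= \sum_{m\ge n}n\,u[m] + \sum_{m<n}m\,u[m].
\end{align*}
The first step is to massage this into the claimed form by adding and subtracting: $n\,u[m] = m\,u[m] - (m-n)u[m]$ for $m\ge n$, so that the $m\ge n$ sum becomes $\sum_{m\ge n}m\,u[m] - \sum_{m\ge n}(m-n)u[m]$; combining the two pieces carrying the factor $m$ gives $\sum_{m\in L_\alpha} m\,u[m] = \langle \mathbf n^{(\alpha)},u\rangle$, and what remains is exactly $-\sum_{m\ge n}(m-n)u[m]$. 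This proves \eqref{12.11.24.16.17}. One should check convergence of these sums: since $u\in\mathcal L^s$ with $s\ge 1$, $\sum_m m|u[m]|<\infty$, which controls both $\langle\mathbf n^{(\alpha)},u\rangle$ and the tail sum $\sum_{m\ge n}(m-n)|u[m]|\le\sum_{m\ge n}m|u[m]|$.

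For the boundedness $G_{0,0}\colon\mathcal L^1\to(\mathcal L^0)^*$, I would read it off \eqref{12.11.24.16.17}: the term $\langle\mathbf n^{(\alpha)},u\rangle$ is a constant on $L_\alpha$ bounded by $\|u\|_{\mathcal L^1}$, and the tail term satisfies $|\sum_{m\ge n}(m-n)u[m]|\le\sum_{m\in L_\alpha}m|u[m]|\le\|u\|_{\mathcal L^1}$ uniformly in $n$; on the finite piece $K$ one uses the $\ell^1\to\ell^\infty$ bound for $g_{0,0}$. Hence $\sup_n|(G_{0,0}u)[n]|\lesssim\|u\|_{\mathcal L^1}$, i.e.\ $G_{0,0}u\in(\mathcal L^0)^*$ with norm control.

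Finally, for the last assertion I would analyze the decay of $(G_{0,0}u)[n]$ as $n\to\infty$ along $L_\alpha$. The tail term $\sum_{m\ge n}(m-n)u[m]$ tends to $0$; more precisely, for $u\in\mathcal L^s$ one shows it lies in $\mathcal L^{s-2}$ as a function of $n$ by a weighted summation-by-parts / Fubini estimate, bounding $(1+n^2)^{(s-2)/2}\sum_{m\ge n}(m-n)|u[m]|$ by a constant times $\sum_m(1+m^2)^{s/2}|u[m]|$ after summing in $n$ first (two summations in $n$ each gain one power of $n^{-1}$ relative to $m$). Therefore $(G_{0,0}u)[n] - \langle\mathbf n^{(\alpha)},u\rangle \in\mathcal L^{s-2}$ on each $L_\alpha$, so $G_{0,0}u\in\mathcal L^{s-2}$ if and only if the constants $\langle\mathbf n^{(\alpha)},u\rangle$ all vanish (a nonzero constant on a half-line is not in $\mathcal L^{s-2}$, and on $K$ the image is automatically in $\ell^2$). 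The only mildly delicate point is the weighted tail estimate establishing membership in $\mathcal L^{s-2}$; everything else is bookkeeping with the explicit kernel $n\wedge m$.
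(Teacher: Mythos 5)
Your proposal takes essentially the same route as the paper: the paper's entire proof consists of invoking the block decomposition \eqref{170803} and citing \cite[Lemma~V.4]{IJ2} for the half-line computation, and what you write out is exactly that computation made explicit. The derivation of \eqref{12.11.24.16.17} from the kernel $n\wedge m$ and the $\mathcal L^1\to(\mathcal L^0)^*$ bound are correct, as is the ``only if'' part of the last assertion (since the tail term tends to $0$, a nonzero constant $\langle\mathbf n^{(\alpha)},u\rangle$ would force $|(G_{0,0}u)[n]|$ to be bounded below on $L_\alpha$, which is incompatible with membership in $\mathcal L^{s-2}$ because $(1+n^2)^{(s-2)/2}\gtrsim n^{-1}$ is not summable).

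The one step that does not close as stated is the weighted tail estimate for the ``if'' direction. After Fubini you need $\sum_{n\le m}(1+n^2)^{(s-2)/2}(m-n)\lesssim(1+m^2)^{s/2}$. This holds for every $s>1$ (for $s\ge2$ bound the weight by $(1+m^2)^{(s-2)/2}$ and sum $m-n$; for $1<s<2$ bound $m-n$ by $m$ and sum $n^{s-2}$), but at the endpoint $s=1$ the left-hand side is of order $m\log m$ while the right-hand side is of order $m$, so your ``each summation gains one power of $n^{-1}$'' heuristic loses a logarithm precisely at the borderline value included in the hypothesis. Indeed, taking $u\ge 0$ with $u[m]\sim m^{-2}(\log m)^{-2}$ for $m\ge 2$ and $u[1]$ adjusted so that $\langle\mathbf n^{(\alpha)},u\rangle=0$ gives $u\in\mathcal L^1$ with $\sum_n(1+n^2)^{-1/2}\bigl|\sum_{m\ge n}(m-n)u[m]\bigr|=\infty$, so the ``if'' direction appears to genuinely require $s>1$ (or a logarithmically corrected target space). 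This defect is inherited from the cited \cite[Lemma~V.4]{IJ2} rather than introduced by your argument, but you should either restrict the Fubini step to $s>1$ or flag the endpoint explicitly.
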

\begin{proof}
Due to the decomposition \eqref{170803} 
we can consider each half-line $L_\alpha$ separately. 
Then the assertion reduces to \cite[Lemma~V.4]{IJ2},
and we omit the proof.
\end{proof}

\begin{lemma}\label{13.1.18.6.0}
The compositions $H_0G_{0,0}$ and $G_{0,0}H_0$,
defined on $\mathcal L^1$ and 
$\mathbb C\mathbf n\oplus 
\mathbb C\mathbf 1\oplus
\mathcal L^1$, respectively, 
are expressed as 
\begin{align*}
H_0G_{0,0}=I_{\mathcal L^1},\quad
G_{0,0}H_0=\Pi_0,
\end{align*}
where $\Pi_0\colon 
\mathbb C\mathbf n\oplus 
\mathbb C\mathbf 1\oplus
\mathcal L^1
\to 
\mathbb C\mathbf 1\oplus\mathcal L^1$ is the projection.
\end{lemma}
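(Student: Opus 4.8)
The plan is to verify the two operator identities by direct computation on the three summands of the domain, using the explicit kernel formula from Lemma~\ref{12.11.24.18.24} together with the block-diagonal structure \eqref{170803} of $G_{0,0}$ and the defining relations \eqref{18082023a}, in particular $H_0G_{0,0}=G_{0,0}H_0=I$ as an a priori identity on $\mathcal H$.

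First I would treat $H_0G_{0,0}=I_{\mathcal L^1}$. Since $G_{0,0}=g_{0,0}\oplus g_{1,0}\oplus\cdots\oplus g_{N,0}$ and $H_0=h_0\oplus h_1\oplus\cdots\oplus h_N$ are both block-diagonal with respect to \eqref{17080223}, it suffices to check $h_\alpha g_{\alpha,0}=I$ on each factor. For $\alpha=0$ this is immediate from $g_{0,0}=h_0^{-1}$ (Proposition~\ref{17080221}). For $\alpha\ge1$ it is the first line of \eqref{18082023}, valid on $\mathfrak l_\alpha^1$; here one should note that $h_\alpha$, being a finite-band difference operator, maps $(\mathfrak l_\alpha^1)^*$ into $(\mathfrak l_\alpha^{-1})^*$ and hence the composition $h_\alpha g_{\alpha,0}$ is well-defined on all of $\mathfrak l_\alpha^1$, and equals the identity there because it does so on the dense subspace of finitely supported sequences. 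Assembling the blocks gives $H_0G_{0,0}=I_{\mathcal L^1}$.

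Next I would treat $G_{0,0}H_0=\Pi_0$ on $\mathbb C\mathbf n\oplus\mathbb C\mathbf 1\oplus\mathcal L^1$. The key observation is that $H_0$ kills no component of $\mathbb C\mathbf n\oplus\mathbb C\mathbf 1$ arbitrarily: a short computation from the definitions of $h_0$ and $h_\alpha$ shows $H_0\mathbf n^{(\alpha)}=0$ and $H_0\mathbf 1^{(\alpha)}=f_\alpha^\ast$ for a suitable finitely supported sequence (the discrete Laplacian annihilates the linear sequence $n$ on the interior of $L_\alpha$ and annihilates the constant sequence except at the endpoint $1^{(\alpha)}$, where the Dirichlet condition produces a boundary term supported at $n=1$). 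Hence for $u=u_{\mathbf n}+u_{\mathbf 1}+u_0\in\mathbb C\mathbf n\oplus\mathbb C\mathbf 1\oplus\mathcal L^1$ we get $H_0u=H_0u_{\mathbf 1}+H_0u_0\in\mathcal L^1$, so $G_{0,0}H_0u$ is well-defined by the first part, and it equals $G_{0,0}H_0(u_{\mathbf 1}+u_0)$. It remains to identify this with $u_{\mathbf 1}+u_0=\Pi_0u$, i.e. to show $G_{0,0}H_0$ acts as the identity on $\mathbb C\mathbf 1\oplus\mathcal L^1$. On $\mathcal L^1$ this follows from $g_{\alpha,0}h_\alpha=I$ on $\mathfrak l_\alpha^1$ (again the block-diagonal structure plus \eqref{18082023} and the $\alpha=0$ case). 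On $\mathbb C\mathbf 1$ one computes directly: $g_{\alpha,0}H_0\mathbf 1^{(\alpha)}=g_{\alpha,0}f_\alpha^\ast$, and using the kernel $g_{\alpha,0}[n,m]=n\wedge m$ from \eqref{G00} together with the explicit form of the boundary term $f_\alpha^\ast$, this sum telescopes back to the constant sequence $\mathbf 1^{(\alpha)}$; alternatively, one invokes Lemma~\ref{12.11.24.18.24} with $u=H_0\mathbf 1^{(\alpha)}$, checking that $\langle\mathbf n^{(\beta)},u\rangle$ has exactly the value needed so that \eqref{12.11.24.16.17} returns the constant $1$ on $L_\alpha$ and $0$ elsewhere.

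The main obstacle is bookkeeping rather than conceptual: one must be careful that all the compositions land in the claimed spaces before invoking the algebraic relations \eqref{18082023a}, since those hold on $\mathcal H$ but here we are extending the operators to the larger spaces $(\mathcal L^s)^*$, and one must pin down precisely the boundary contribution $H_0\mathbf 1^{(\alpha)}$ (a sequence supported near the joining vertices $x_\alpha$ and the endpoints $1^{(\alpha)}$). Once $H_0\mathbf 1^{(\alpha)}$ is written out explicitly, the rest is the telescoping computation already alluded to, which can either be done by hand or quoted from the half-line case \cite[Section~V]{IJ2} via the block decomposition \eqref{170803}, exactly as in the proof of Lemma~\ref{12.11.24.18.24}.
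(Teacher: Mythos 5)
Your proposal is correct and follows essentially the same route as the paper: decompose along \eqref{170803} into the finite part and the half-lines, where the finite block is handled by invertibility of $h_0$ and the half-line blocks by the explicit kernel $n\wedge m$ (the paper simply quotes \cite[Lemma~V.5]{IJ2} for the latter, while you work out the telescoping and the computations $H_0\mathbf n^{(\alpha)}=0$, $H_0\mathbf 1^{(\alpha)}=f_\alpha$ by hand). One small correction: since $H_0$ contains no coupling between $K$ and $L_\alpha$, the boundary term $H_0\mathbf 1^{(\alpha)}$ is supported only at $1^{(\alpha)}$, not near $x_\alpha$; this does not affect your argument.
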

\begin{proof}
Due to the decomposition \eqref{170803} 
we can consider separately the finite part $K$ and each of the half-line $L_\alpha$. 
Then the assertion reduces to \cite[Lemma~V.5]{IJ2}, and we omit the detail.
\end{proof}

\begin{proof}[Proof of Proposition~\ref{13.1.16.2.51}.]
\textit{Step 1.}\quad
We first show the well-definedness of $z$ as \eqref{17080315}. 
Note that, since $M_0$ is self-adjoint on $\mathcal K$, and $Q\mathcal K=\mathop{\mathrm{Ker}}M_0$,
it is clear that 
\begin{align*}
Q\mathcal K\cap M_0^\dagger \bigl[P\mathcal K\cap (Q\mathcal K)^\perp\bigr]
=\{0\}.
\end{align*}
For any $\Phi_1\in Q\mathcal K$ and $\Phi_2\in P\mathcal K\cap (Q\mathcal K)^\perp$
we can write 
\begin{align}
z\bigl(\Phi_1+M_0^\dagger \Phi_2\bigr)
=
-G_{0,0}v\Phi_1
+
\sum_{\alpha=1}^N
\bigl\langle \Phi^{(\alpha)}, \Phi_2\bigr\rangle\Psi^{(\alpha)}
-G_{0,0}vM_0^\dagger \Phi_2.
\label{1708043}
\end{align}
Then it follows by Lemma~\ref{12.11.24.18.24}
that $z(\Phi_1+M_0^\dagger \Phi_2)\in (\mathcal L^\beta)^*$, 
and by $H=H_0+vUv^*$, Lemma~\ref{13.1.18.6.0} and $v^{\ast}G_{0,0}v=M_0-U$ that
\begin{align*}
Hz\bigl(\Phi_1+M_0^\dagger \Phi_2\bigr)
&=
-v\Phi_1
-vM_0^\dagger \Phi_2
-vU(M_0-U)\Phi_1
+
vUP\Phi_2
\\&\phantom{{}={}}{}
-vU(M_0-U)M_0^\dagger \Phi_2
\\
&=
0.
\end{align*}
This verifies $z(\Phi_1+M_0^\dagger \Phi_2)\in \widetilde{\mathcal E}$,
and hence the operator $z$ is well-defined as \eqref{17080315}.
The injectivity of $z$ will be discussed in Step 4.
 
\smallskip
\noindent
\textit{Step 2.}\quad
Next we show the well-definedness of $w$ as \eqref{17080316}.
Let $\Psi\in\widetilde{\mathcal E}$ and  compute 
\begin{align*}
M_0w\Psi
=(U+v^*G_{0,0}v)Uv^*\Psi
=v^*(I+G_{0,0}V)\Psi.
\end{align*}
Since 
$$H_0(I+G_{0,0}V)\Psi=(H_0+V)\Psi=0,$$
we can deduce that $(I+G_{0,0}V)\Psi\in \mathbb C\mathbf n$, and hence that 
\begin{align*}
M_0w\Psi
\in P\mathcal K\cap (Q\mathcal K)^\perp.
\end{align*}
This implies that $w\Psi\in Q\mathcal K\oplus M_0^\dagger \bigl[P\mathcal K\cap (Q\mathcal K)^\perp\bigr]$.
Hence the operator $w$ is well-defined as \eqref{17080316}.
The surjectivity of $w$ will be discussed in Step 4.

\smallskip
\noindent
\textit{Step 3.}\quad
Here we prove \eqref{12.12.19.6.56}.
For any $\Phi_1\in Q\mathcal K$ and $\Phi_2\in P\mathcal K\cap (Q\mathcal K)^\perp$
we can compute by using \eqref{1708043}, $V=vUv^*$ and $v^{\ast}G_{0,0}v=M_0-U$:
\begin{align}
\begin{split}
Vz(\Phi_1+M_0^\dagger \Phi_2)
&=
v\Bigl[-U(M_0-U)\Phi_1
+
UP\Phi_2
-U(M_0-U)M_0^\dagger \Phi_2\Bigr]
\\
&=
v\bigl[\Phi_1+M_0^\dagger \Phi_2\bigr]
.
\end{split}
\label{170804331}
\end{align}
This and the injectivity of $v$ show that 
\begin{align*}
z\bigl(Q\mathcal K\oplus M_0^\dagger \bigl[P\mathcal K\cap (Q\mathcal K)^\perp\bigr]\bigr)
\cap
\bigl(\mathbb C\mathbf n \cap \mathop{\mathrm{Ker}} V\bigr)
=\{0\}.
\end{align*}
Now by the result of Step 1 we have 
\begin{align*}
\widetilde{\mathcal E}
\supset 
z\bigl(Q\mathcal K\oplus M_0^\dagger \bigl[P\mathcal K\cap (Q\mathcal K)^\perp\bigr]\bigr)\oplus
\bigl(\mathbb C\mathbf n \cap \mathop{\mathrm{Ker}} V\bigr),
\end{align*}
and let us show the inverse inclusion.
For any $\Psi\in \widetilde{\mathcal E}$ we can write  
$$
zw\Psi
=\sum_{\alpha=1}^N
\bigl\langle \Phi^{(\alpha)},M_0Uv^*\Psi\bigr\rangle \Psi^{(\alpha)}  
-G_{0,0}V\Psi,
$$
from which we can easily deduce that 
$$H_0(\Psi-zw\Psi)=V(\Psi-zw\Psi)=0.$$
Hence we have 
\begin{align}
\Psi-zw\Psi\in \mathbb C\mathbf n\cap \mathop{\mathrm{Ker}}V.
\label{17080317}
\end{align}
Since $w\Psi \in Q\mathcal K\oplus M_0^\dagger \bigl[P\mathcal K\cap (Q\mathcal K)^\perp\bigr]$
by the result of Step 2, we obtain 
\begin{align*}
\widetilde{\mathcal E}
\subset 
z\bigl(Q\mathcal K\oplus M_0^\dagger \bigl[P\mathcal K\cap (Q\mathcal K)^\perp\bigr]\bigr)\oplus
\bigl(\mathbb C\mathbf n \cap \mathop{\mathrm{Ker}} V\bigr),
\end{align*}
verifying \eqref{12.12.19.6.56}.

\smallskip
\noindent
\textit{Step 4.}\quad
Let us prove \eqref{12.11.26.19.10}.
Similarly to \eqref{170804331}, we have that 
for any $\Phi_1\in Q\mathcal K$ and $\Phi_2\in P\mathcal K\cap (Q\mathcal K)^\perp$ 
\begin{align*}
wz(\Phi_1+M_0^\dagger \Phi_2)
=
\Phi_1+M_0^\dagger \Phi_2
.
\end{align*}
Hence the first identity of \eqref{12.11.26.19.10} follows,
and this in particular implies that 
$z$ is injective as \eqref{17080315}, and that $w$ is surjective as \eqref{17080316}.
The second identity of \eqref{12.11.26.19.10} 
follows by 
\eqref{12.12.19.6.56} and \eqref{17080317}.

\smallskip
\noindent
\textit{Step 5.}\quad
Let us show \eqref{12.12.19.6.57} and \eqref{12.12.19.6.58}.
By \eqref{12.12.19.6.56} it is clear that 
\begin{align*}
\mathsf E\subset \mathcal E\subset 
z\bigl(Q\mathcal K\oplus M_0^\dagger \bigl[P\mathcal K\cap (Q\mathcal K)^\perp\bigr]\bigr).
\end{align*}
Then recall the expression \eqref{1708043}
for any 
$\Phi_1\in Q\mathcal K$ and $\Phi_2\in P\mathcal K\cap (Q\mathcal K)^\perp$.
By Lemma~\ref{12.11.24.18.24}
the right-hand side of \eqref{1708043} belongs to $\mathbb C\mathbf 1\oplus\mathcal L^{\beta-2}$
if and only if $\Phi_2=0$,
from which \eqref{12.12.19.6.57} follows.
Similarly, by Lemma~\ref{12.11.24.18.24} 
the right-hand side of \eqref{1708043} belongs to $\mathcal L^{\beta-2}$
if and only if $\Phi_2=0$ and $P\Phi_1=0$.
This verifies \eqref{12.12.19.6.58}.

\smallskip
\noindent
\textit{Step 6.}\quad
Finally we prove the last assertions of the proposition.
We first note that by Lemma~\ref{12.11.24.18.24} 
and compactness of the embedding $(\mathcal L^0)^*\to (\mathcal L^1)^*$
the operator $Uv^*G_{0,0}v\in \mathcal \mathcal B(\mathcal K)$ is in fact compact,
and hence 
\begin{align*}
\dim Q\mathcal K
=\dim\mathop{\mathrm{Ker}} M_0
=\dim\mathop{\mathrm{Ker}} (1+Uv^*G_{0,0}v)<\infty.
\end{align*}
This and \eqref{12.12.19.6.58} in particular 
imply the last inequality of \eqref{170802}.
As for the first identity of \eqref{170802}, 
note \eqref{12.12.19.6.56}--\eqref{12.12.19.6.58} and 
\begin{align*}
\dim\bigl(\mathbb C\mathbf n \cap \mathop{\mathrm{Ker}} V\bigr)
=N-\dim P\mathcal K,
\end{align*}
and then it suffices to show that 
\begin{align}
\dim \bigl(Q\mathcal K\big/\bigl[Q\mathcal K\cap (P\mathcal K)^\perp\bigr]\bigr)
+\dim \bigl(M_0^\dagger \bigl[P\mathcal K\cap (Q\mathcal K)^\perp\bigr]\bigr)
=\dim P\mathcal K.
\label{170804544}
\end{align}
By the rank--nullity theorem applied to the restrictions 
$Q\colon P\mathcal K\to QP\mathcal K$ and $P\colon Q\mathcal K\to PQ\mathcal K$
we have 
\begin{align}
\begin{split}
\dim QP\mathcal K+\dim \bigl(P\mathcal K\cap (Q\mathcal K)^\perp\bigr)
&=
\dim P\mathcal K,
\\
\dim PQ\mathcal K+\dim \bigl(Q\mathcal K\cap (P\mathcal K)^\perp\bigr)
&=
\dim Q\mathcal K.
\end{split}
\label{1708045}
\end{align}
By the expressions 
\begin{align*}
QP\mathcal K&=\mathbb CQ\Phi_1+\dots+\mathbb CQ\Phi_N,
\\
PQ\mathcal K
&=\bigl\{\langle Q\Phi_1,\Phi\rangle\Phi_1+\dots+\langle Q\Phi_N,\Phi\rangle\Phi_N;\ \Phi\in\mathcal K\bigr\}
\end{align*}
we can deduce $\dim QP\mathcal K=\dim PQ\mathcal K$, and then by \eqref{1708045} 
it follows that 
\begin{align*}
\dim \bigl(P\mathcal K\cap (Q\mathcal K)^\perp\bigr)
+\dim Q\mathcal K
-\dim \bigl(Q\mathcal K\cap (P\mathcal K)^\perp\bigr)
&=
\dim P\mathcal K.
\end{align*}
Then, since the restriction of $M_0^\dagger$ to $(Q\mathcal K)^\perp$
is invertible, we obtain \eqref{170804544}.

The asymptotics \eqref{170804} for $\Psi\in \Pi(\widetilde{\mathcal E})$ 
can be verified by computing $\Psi=zw\Psi$ and applying Lemma~\ref{12.11.24.18.24}.
We omit the detailed computations.
\end{proof}

\begin{proof}[Proof of Corollary~\ref{160616438}]
The arguments below are based on 
Definition~\ref{def-reg-excp} and 
the expressions \eqref{12.12.19.6.56}--\eqref{12.12.19.6.58}.

\smallskip
\noindent
\textit{\ref{12.12.19.6.30}.}\quad
The threshold $0$ is a regular point 
if and only if $Q\mathcal K=\{0\}$,
which is obviously equivalent to the invertibility of $M_0$ in $\mathcal B(\mathcal K)$.

\smallskip
\noindent
\textit{\ref{12.12.19.6.31}.}\quad
The threshold $0$ be an exceptional point of the first kind
if and only if $Q\mathcal K\neq \{0\}$ and $Q\mathcal K\cap (P\mathcal K)^\perp=\{0\}$.
The former condition $Q\mathcal K\neq \{0\}$ is equivalent to 
non-invertibility of $M_0$ in $\mathcal B(\mathcal K)$.
The latter can be rewritten as $Q\mathcal K= QP\mathcal K$,
which is the case if and only if $m_0$ is invertible in $\mathcal B(Q\mathcal K)$.

\smallskip
\noindent
\textit{\ref{12.12.19.6.32}.}\quad
The threshold $0$ be an exceptional point of the second kind
if and only if 
$Q\mathcal K\neq \{0\}$,
which again is equivalent to 
non-invertibility of $M_0$ in $\mathcal B(\mathcal K)$,
and 
$Q\mathcal K=Q\mathcal K\cap (P\mathcal K)^\perp$.
The latter condition is equivalent to $Q\mathcal K\subset (P\mathcal K)^\perp$
or $P\mathcal K\subset (Q\mathcal K)^\perp$,
which in turn is equivalent to $m_0=0$.

\smallskip
\noindent
\textit{\ref{12.12.19.6.33}.}\quad
This case is treated as a complement of 
the above \ref{12.12.19.6.30}--\ref{12.12.19.6.32}. 
We may also discuss it directly as above, but let us omit it.
\end{proof}

\section{Implementation of resolvent expansion}\label{1608216}

In this section we prove 
Theorems~\ref{thm-reg}--\ref{thm-ex3},
employing the expansion scheme of Jensen--Nenciu \cite{JN}.
Due to our choice of the free operator many parts of the proofs 
are identical with those in \cite{IJ2},
but we will repeat most of them for readability.
On the other hand we will often refer to \cite[Appendix]{IJ2}
for a form of the expansion scheme adapted to the problem at hand.

\subsection{The first step in the expansion}\label{1608217}

Here we present the first step of the expansion
common to all the threshold types.

Define the operator
$M(\kappa)\in\mathcal B(\mathcal K)$
for $\mathop{\mathrm{Re}}\kappa>0$ by
\begin{equation}
M(\kappa)=U+v^*R_0(\kappa)v.\label{Mdef}
\end{equation}
Fix $\kappa_0>0$ such that $z=-\kappa^2$ belongs to the resolvent set of $H$ for any $\mathop{\mathrm{Re}}\kappa\in(0,\kappa_0)$. This is possible due to the decay assumptions on $V$.

\begin{lemma}\label{prop23}
Let the operator $M(\kappa)$ be defined as above.
\begin{enumerate}
\item\label{16082118}
Let Assumption~\ref{assumV} hold for some integer $\beta\ge 2$.
Then 
\begin{equation}\label{M-expand}
M(\kappa)=\sum_{j=0}^{\beta-2} \kappa^jM_j+\mathcal{O}(\kappa^{\beta-1})
\quad 
\text{in }\mathcal B(\mathcal K)
\end{equation}
with $M_j\in\mathcal B(\mathcal K)$ given by
\begin{align}\label{m-expand}
M_0=U+v^*G_{0,0}v,\quad
M_j=v^*G_{0,j}v\text{ for }  j\geq1.
\end{align}

\item\label{16082119}
Let Assumption~\ref{assumV} hold with $\beta\ge 1$.
For any $0<\mathop{\mathrm{Re}}\kappa<\kappa_0$ the operator $M(\kappa)$
is invertible in $\mathcal B(\mathcal K)$,
and 
\begin{align*}
M(\kappa)^{-1}&=U-Uv^*R(\kappa)vU.
\end{align*}
Moreover, 
\begin{align}
R(\kappa)&=R_0(\kappa)
-R_0(\kappa)vM(\kappa)^{-1}v^*R_0(\kappa).\label{second-resolvent}
\end{align}
\end{enumerate}
\end{lemma}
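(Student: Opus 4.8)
The plan is to treat the two parts essentially independently, as part (1) is a routine consequence of the free resolvent expansion while part (2) is the standard symmetrized resolvent (Feshbach/Grushin-type) identity. For part (1): substitute the free expansion \eqref{free-expan} into the definition \eqref{Mdef}. Since $v\in\mathcal B(\mathcal K,\mathcal L^\beta)$ and $v^*\in\mathcal B((\mathcal L^\beta)^*,\mathcal K)$, sandwiching the expansion $R_0(\kappa)=\sum_{j=0}^\nu\kappa^jG_{0,j}+\mathcal O(\kappa^{\nu+1})$ in $\mathcal B^{\nu+2}$ between $v$ and $v^*$ lands everything in $\mathcal B(\mathcal K)$, provided $\nu+2\le\beta$, i.e.\ $\nu\le\beta-2$; taking $\nu=\beta-2$ gives \eqref{M-expand}. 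Reading off coefficients: the $\kappa^0$ term is $U+v^*G_{0,0}v$, and for $j\ge1$ it is $v^*G_{0,j}v$, which is \eqref{m-expand}. The only care needed is that the remainder estimate in $\mathcal B(\mathcal K)$ follows from the one in $\mathcal B^\beta$ together with boundedness of $v\colon\mathcal K\to\mathcal L^\beta$ and of $v^*\colon(\mathcal L^\beta)^*\to\mathcal K$.

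For part (2), the strategy is the usual one for factored perturbations $V=vUv^*$. Start from the second resolvent equation $R(\kappa)=R_0(\kappa)-R_0(\kappa)VR(\kappa)$, write $V=vUv^*$, and manipulate formally to isolate $v^*R(\kappa)v$. Concretely, I would first verify the algebraic identity $M(\kappa)\,U\,v^*R(\kappa)v=v^*R_0(\kappa)v$ by plugging in $M(\kappa)=U+v^*R_0(\kappa)v$, using $U^2=I$ (self-adjoint unitary), and the resolvent equation; this shows $M(\kappa)$ has a right inverse on the range. Symmetrically one checks a left-inverse identity, and since these force $M(\kappa)$ to be invertible in $\mathcal B(\mathcal K)$ (a bounded operator with bounded left and right inverses is invertible), one reads off $M(\kappa)^{-1}=U-Uv^*R(\kappa)vU$. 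Substituting this back into $R(\kappa)=R_0(\kappa)-R_0(\kappa)vUv^*R(\kappa)$ and simplifying yields \eqref{second-resolvent}. The well-definedness of $R(\kappa)$ for $0<\mathop{\mathrm{Re}}\kappa<\kappa_0$ is exactly the choice of $\kappa_0$ fixed before the lemma, so there is no circularity.

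The one genuine subtlety — and what I expect to be the main obstacle to get exactly right — is the functional-analytic bookkeeping of the spaces in part (2): $R_0(\kappa)$ and $R(\kappa)$ are a priori only bounded on $\mathcal H=\ell^2(G)$, whereas $v$ maps into $\mathcal L^\beta\subset\mathcal H$ and $v^*$ is bounded $\mathcal H\to\mathcal K$ (restricting the $(\mathcal L^\beta)^*\to\mathcal K$ map), so all the compositions $v^*R_0(\kappa)v$, $v^*R(\kappa)v$ make sense as bounded operators on $\mathcal K$ for $\mathop{\mathrm{Re}}\kappa>0$ without invoking any weighted-space mapping properties. Once one is careful that every product appearing is a legitimate composition of bounded maps on the Hilbert-space level (using only $v\in\mathcal B(\mathcal K,\mathcal H)$, $v^*\in\mathcal B(\mathcal H,\mathcal K)$, and $R(\kappa),R_0(\kappa)\in\mathcal B(\mathcal H)$), the identities are purely algebraic. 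I would also remark that this is the standard computation appearing in \cite{JN}, so one may alternatively just cite it; but since the paper promises to repeat proofs for readability, carrying out the three-line algebraic verification is preferable.
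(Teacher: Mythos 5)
Your proposal is correct: part (1) is exactly the sandwiching of the free expansion \eqref{free-expan} (with $\nu=\beta-2$) between $v^*$ and $v$, and part (2) is the standard algebraic verification of the symmetrized resolvent identity for $V=vUv^*$ using $U^2=I$ and the second resolvent equation, with all compositions legitimate since $v\in\mathcal B(\mathcal K,\mathcal H)$ and $R(\kappa),R_0(\kappa)\in\mathcal B(\mathcal H)$. The paper itself simply cites \cite[Lemma~VI.1]{IJ2}, which carries out this same standard Jensen--Nenciu computation, so your argument coincides with the intended proof.
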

\begin{proof}
See \cite[Lemma~V\hspace{-.1em}I.1]{IJ2}.
\end{proof}

Note that the above $M_0$ coincides with \eqref{160817218}.
By Lemma~\ref{prop23} and Proposition~\ref{prop12} 
in order to expand $R(\kappa)$ 
it suffices to expand the inverse $M(\kappa)^{-1}$. 
If the leading operator $M_0$ of $M(\kappa)$
is invertible in $\mathcal B(\mathcal K)$,
we can employ the Neumann series to compute the expansion of $M(\kappa)^{-1}$.
Otherwise, we are to apply the inversion formula \cite[Proposition~A.2]{IJ2}.

\subsection{Regular threshold}\label{1608241726}

We start with the proof of Theorem~\ref{thm-reg}. 
The inversion formula \cite[Proposition~A.2]{IJ2} is not needed here.

\begin{proof}[Proof of Theorem~\ref{thm-reg}]
By the assumption and Corollary~\ref{160616438}
the operator $M_0$ is invertible in $\mathcal{B}(\mathcal{K})$.
Hence we can use the Neumann series to invert \eqref{M-expand}.
Let us write it as
\begin{align}
M(\kappa)^{-1}=\sum_{j=0}^{\beta-2}\kappa^jA_j+\mathcal O(\kappa^{\beta-1}),\quad
A_j\in\mathcal B(\mathcal K).
\label{160819}
\end{align}
The coefficients $A_j$ are written explicitly in terms of the $M_j$.
For example, the first two are given by 
\begin{align}
A_0&=M_0^{-1},\quad 
A_1=-M_0^{-1}M_1M_0^{-1}.\label{12.12.1.7.18}
\end{align}
We insert the expansions \eqref{free-expan} with $N=\beta-2$ and \eqref{160819}
into \eqref{second-resolvent}, and then obtain the expansion
\begin{align}
R(\kappa)&=\sum_{j=0}^{\beta-2}\kappa^j
G_j+\mathcal O(\kappa^{\beta-1});\quad
G_j=G_{0,j}
-\sum_{\genfrac{}{}{0pt}{}{j_1\ge 0,j_2\ge 0,j_3\ge 0}{j_1+j_2+j_3=j}}
G_{0,j_1}vA_{j_2}v^*G_{0,j_3}.
\label{170820}
\end{align}
This verifies \eqref{17081821} and \eqref{17081822}.
\end{proof}

\subsection{Exceptional threshold of the first kind}\label{1608241727}

Next, we prove Theorem~\ref{thm-ex1}.

\begin{proof}[Proof of Theorem~\ref{thm-ex1}]
By the assumption and Corollary~\ref{160616438}
the leading operator $M_0$ from \eqref{M-expand}
is not invertible in $\mathcal B(\mathcal K)$, 
and we are going to apply the inversion formula \cite[Proposition~A.2]{IJ2}
to invert the expansion~\eqref{M-expand}.
Let us write the expansion \eqref{M-expand} as
\begin{equation}\label{ex1-M-expand}
M(\kappa)
=\sum_{j=0}^{\beta-2}\kappa^jM_j+\mathcal{O}(\kappa^{\beta-1})=M_0+\kappa \widetilde{M}_1(\kappa).
\end{equation}
Let $Q$ be the orthogonal projection onto $\mathop{\mathrm{Ker}} M_0$
as in 
Section~\ref{12.12.19.2.5}, and define 
\begin{equation}\label{def-J0}
m(\kappa)=
\sum_{j=0}^{\infty}(-1)^j
\kappa^jQ\widetilde{M}_1(\kappa)\bigl[(M_0^\dagger+Q)\widetilde{M}_1(\kappa)\bigr]^{j}Q.
\end{equation}
Then by \cite[Proposition~A.2]{IJ2} we have  
\begin{equation}
M(\kappa)^{-1}=(M(\kappa)+Q)^{-1}+\kappa^{-1}
(M(\kappa)+Q)^{-1}m(\kappa)^{\dagger}(M(\kappa)+Q)^{-1}.
\label{16082255}
\end{equation}
Note that by using \eqref{ex1-M-expand} we 
can rewrite \eqref{def-J0} in the form
\begin{equation}
m(\kappa)=\sum_{j=0}^{\beta-3}\kappa^jm_j+\mathcal{O}(\kappa^{\beta-2});
\quad 
m_j\in\mathcal B(Q\mathcal K).
\label{16082317}
\end{equation}
For  later use we write down the first five coefficients:
\begin{align}
\begin{split}
m_0&=QM_1Q,
\end{split}
\label{1608231743}
\\
\begin{split}
m_1&=Q\Bigl(M_2-M_1(M_0^\dagger +Q)M_1\Bigr)Q,
\end{split}
\label{1608231744}
\\
\begin{split}
m_2&=
Q\Bigl(M_3-M_1(M_0^\dagger +Q)M_2-M_2(M_0^\dagger +Q)M_1
\\&\phantom{{}=Q(}{}
+M_1(M_0^\dagger +Q)M_1(M_0^\dagger +Q)M_1\Bigr)Q
,
\end{split}
\label{1608231745}
\\
\begin{split}
m_3&=
Q\Bigl(
M_4
-M_1(M_0^\dagger +Q)M_3
-M_2(M_0^\dagger +Q)M_2
-M_3(M_0^\dagger +Q)M_1
\\&\phantom{{}=Q(}{}
+M_1(M_0^\dagger +Q)M_1(M_0^\dagger +Q)M_2
\\&\phantom{{}=Q(}{}
+M_1(M_0^\dagger +Q)M_2(M_0^\dagger +Q)M_1
\\&\phantom{{}=Q(}{}
+M_2(M_0^\dagger +Q)M_1(M_0^\dagger +Q)M_1
\\&\phantom{{}=Q(}{}
-M_1(M_0^\dagger +Q)M_1(M_0^\dagger +Q)M_1(M_0^\dagger +Q)M_1\Bigr)Q,
\end{split}
\label{1608231746}
\\
\begin{split}
m_4&=
Q\Bigl(
M_5
\\&\phantom{{}=Q(}{}
-M_1(M_0^\dagger +Q)M_4-M_2(M_0^\dagger +Q)M_3
\\&\phantom{{}=Q(}{}
-M_3(M_0^\dagger +Q)M_2-M_4(M_0^\dagger +Q)M_1
\\&\phantom{{}=Q(}{}
+M_1(M_0^\dagger +Q)M_1(M_0^\dagger +Q)M_3
+M_1(M_0^\dagger +Q)M_2(M_0^\dagger +Q)M_2
\\&\phantom{{}=Q(}{}
+M_2(M_0^\dagger +Q)M_2(M_0^\dagger +Q)M_1
+M_2(M_0^\dagger +Q)M_1(M_0^\dagger +Q)M_2
\\&\phantom{{}=Q(}{}
+M_1(M_0^\dagger +Q)M_3(M_0^\dagger +Q)M_1
+M_3(M_0^\dagger +Q)M_1(M_0^\dagger +Q)M_1
\\&\phantom{{}=Q(}{}
-M_1(M_0^\dagger +Q)M_1(M_0^\dagger +Q)M_1(M_0^\dagger +Q)M_2
\\&\phantom{{}=Q(}{}
-M_1(M_0^\dagger +Q)M_1(M_0^\dagger +Q)M_2(M_0^\dagger +Q)M_1
\\&\phantom{{}=Q(}{}
-M_1(M_0^\dagger +Q)M_2(M_0^\dagger +Q)M_1(M_0^\dagger +Q)M_1
\\&\phantom{{}=Q(}{}
-M_2(M_0^\dagger +Q)M_1(M_0^\dagger +Q)M_1(M_0^\dagger +Q)M_1
\\&\phantom{{}=Q(}{}
+M_1(M_0^\dagger +Q)M_1(M_0^\dagger +Q)M_1(M_0^\dagger +Q)M_1(M_0^\dagger +Q)M_1\Bigr)Q.
\end{split}
\label{1608231747}
\end{align}
Note that by \eqref{m-expand}, \eqref{170803} and \eqref{G01} 
the above coefficient $m_0$ certainly coincides with \eqref{160817219}.
Then by the assumption and Corollary~\ref{160616438} 
the coefficient $m_0$ is invertible in $\mathcal B(Q\mathcal K)$.
Thus the Neumann series provides 
the expansion of the inverse $m(\kappa)^\dagger$.
Let us write it as 
\begin{equation}
m(\kappa)^\dagger=\sum_{j=0}^{\beta-3}\kappa^jA_j+\mathcal{O}(\kappa^{\beta-2});\quad
A_j\in\mathcal B(Q\mathcal K)
\label{ex1-m-expand}
\end{equation}
with
\begin{align}
\begin{split}
A_0&=m_0^\dagger,\quad
A_1=-m_0^\dagger m_1m_0^\dagger,\quad
A_2=m_0^\dagger \bigl(-m_2+m_1m_0^\dagger m_1\bigr)m_0^\dagger .
\end{split}
\label{170821}
\end{align}
The Neumann series also provide an expansion of 
$(M(\kappa)+Q)^{-1}$, which we write  
\begin{align}
(M(\kappa)+Q)^{-1}
=\sum_{j=0}^{\beta-2}\kappa^jB_j+\mathcal{O}(\kappa^{\beta-1});\quad
B_j\in\mathcal B(\mathcal K).
\label{1608225}
\end{align}
The first four coefficients can be written as follows: 
\begin{align}
\begin{split}
B_0&=M_0^\dagger +Q,\\
B_1&=-(M_0^\dagger +Q) M_1(M_0^\dagger +Q),\\
B_2&=(M_0^\dagger +Q) \bigl(-M_2+M_1(M_0^\dagger +Q) M_1\bigr)(M_0^\dagger +Q),\\
B_3&=(M_0^\dagger +Q) \Bigl(-M_3
+M_1(M_0^\dagger +Q) M_2
+M_2(M_0^\dagger +Q) M_1
\\&\phantom{{}=-(M_0^\dagger +Q) \Bigl({}}{}
-M_1(M_0^\dagger +Q) M_1(M_0^\dagger +Q) M_1\Bigr)(M_0^\dagger +Q)
.
\end{split}
\label{1708212}
\end{align} 
Now we insert the expansions \eqref{ex1-m-expand} and \eqref{1608225} 
into the formula \eqref{16082255},
and then 
\begin{align}
M(\kappa)^{-1}
=\sum_{j=-1}^{\beta-4}\kappa^jC_j+\mathcal O(\kappa^{\beta-3});
\quad
C_j=B_j
+\sum_{\genfrac{}{}{0pt}{}{j_1\ge 0,j_2\ge 0,j_3\ge 0}{j_1+j_2+j_3=j+1}}
B_{j_1}A_{j_2}B_{j_3}
\label{160822553}
\end{align}
with $B_{-1}=0$. 
Next we insert the expansions \eqref{free-expan} with $N=\beta-3$ and 
\eqref{160822553} into the formula \eqref{second-resolvent}.
Then we obtain the expansion
\begin{align}
R(\kappa)&=\sum_{j=-1}^{\beta-4}\kappa^j
G_j+\mathcal O(\kappa^{\beta-3});\quad
G_j=G_{0,j}
-\sum_{\genfrac{}{}{0pt}{}{j_1\ge 0,j_2\ge -1,j_3\ge 0}{j_1+j_2+j_3=j}}
G_{0,j_1}vC_{j_2}v^*G_{0,j_3}
\label{170820133}
\end{align}
with $G_{0,-1}=0$. This verifies \eqref{expand-first} and $G_{-2}=0$.

Let us next compute $G_{-1}$.
By the above expressions we can write
\begin{align*}
G_{-1}&=-G_{0,0}vC_{-1} v^*G_{0,0}
=-G_{0,0}vm_0^\dagger v^*G_{0,0}
.
\end{align*}
The expression \eqref{160817219} implies that 
$m_0$ is of finite rank, self-adjoint and non-positive on $Q\mathcal K$.
Since $m_0$ is invertible in $\mathcal B(Q\mathcal K)$, 
it is in fact strictly negative on $Q\mathcal K$,
and we can find an orthogonal basis $\{\Phi_\gamma\}\subset Q\mathcal K$ such that  
\begin{align}
m_0^\dagger =- \sum_\gamma|\Phi_\gamma\rangle\langle \Phi_\gamma|.
\label{16082516}
\end{align}
Then we can write 
\begin{align*}
G_{-1}=\sum_\gamma|\Psi_\gamma\rangle\langle \Psi_\gamma|,
\end{align*}
where $\Psi_\gamma=z\Phi_\gamma\in \mathsf E$ certainly form a resonance basis due to Proposition~\ref{13.1.16.2.51}.
The orthonormality of $\{\Psi_\gamma\}\subset \mathcal E$ follows by 
\begin{align}
\delta_{\gamma\gamma'}
&
=
\|\Phi_\gamma\|^{-2} \bigl\langle \Phi_{\gamma'},m_0m_0^\dagger\Phi_\gamma\bigr\rangle
=\sum_{\alpha=1}^N\bigl\langle V\Psi_{\gamma'},\mathbf n^{(\alpha)}\bigr\rangle
\bigl\langle \mathbf n^{(\alpha)},V\Psi_\gamma\bigr\rangle
\label{17081214}
\end{align}
and Lemma~\ref{12.11.24.18.24}, since $\mathsf E=\{0\}$. 
Hence we obtain the latter identity of \eqref{1608229}.
\end{proof}

\subsection{Exceptional threshold of the second kind}\label{1608241728}

Here we prove Theorem~\ref{thm-ex2}.
We begin with the following lemma.
\begin{lemma}\label{lemma24}
Let $u_1,u_2\in\mathcal{L}^4$. Assume that
\begin{equation}\label{zero-cond}
\langle\mathbf{n}^{(\alpha)},u_1\rangle=0,\quad \alpha=1,\dots,N.
\end{equation}
Then one has that  $G_{0,0}u_1\in\mathcal{L}^2$ and that
\begin{equation}
\langle u_2,G_{0,2}u_1\rangle=-\langle G_{0,0}u_2,G_{0,0}u_1\rangle.
\end{equation}
\end{lemma}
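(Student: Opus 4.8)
The plan is to derive this directly from the algebraic relations among the free coefficients $G_{0,j}$ together with the explicit kernel description of $G_{0,0}$ in Lemma~\ref{12.11.24.18.24}. First I would observe that the hypothesis \eqref{zero-cond}, namely $\langle\mathbf n^{(\alpha)},u_1\rangle=0$ for all $\alpha$, is exactly the condition in Lemma~\ref{12.11.24.18.24} guaranteeing $G_{0,0}u_1\in\mathcal L^{s-2}$; since $u_1\in\mathcal L^4$ this gives $G_{0,0}u_1\in\mathcal L^2$, which is the first claim and also makes the pairing $\langle G_{0,0}u_2,G_{0,0}u_1\rangle$ well defined (note $G_{0,0}u_2\in(\mathcal L^0)^*$ by the boundedness statement in that lemma, and $G_{0,0}u_1\in\mathcal L^2\subset\mathcal L^0$, so the duality makes sense; similarly $\langle u_2,G_{0,2}u_1\rangle$ is legitimate since $G_{0,2}\in\mathcal B^3$ maps $\mathcal L^4$ into $(\mathcal L^3)^*$ and $u_2\in\mathcal L^4\subset\mathcal L^3$).

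Next, the heart of the computation is the identity $H_0G_{0,2}=-G_{0,0}$ from \eqref{18082023a}. I would like to ``invert'' $H_0$ on the left using $G_{0,0}$. Applying $G_{0,0}H_0=\Pi_0$ from Lemma~\ref{13.1.18.6.0}, one expects
\begin{align*}
G_{0,0}(-G_{0,0}u_1)=G_{0,0}H_0G_{0,2}u_1=\Pi_0(G_{0,2}u_1),
\end{align*}
so that, pairing with $u_2$ and using that $\Pi_0$ projects off the $\mathbb C\mathbf n$ component while the $\mathbb C\mathbf 1$ component is annihilated upon pairing with $u_2\in\mathcal L^4$ (because $\langle\mathbf 1^{(\alpha)},u_2\rangle$ and $\langle\mathbf n^{(\alpha)},u_2\rangle$ are the only pieces removed, and one checks the relevant terms drop out), we would get $-\langle u_2,G_{0,0}G_{0,0}u_1\rangle=\langle u_2,G_{0,2}u_1\rangle$ up to the terms killed by the projection. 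Then self-adjointness of $G_{0,0}$ (as the kernel $n\wedge m$ on each ray, and $h_0^{-1}$ on $K$, is symmetric) rewrites the left side as $-\langle G_{0,0}u_2,G_{0,0}u_1\rangle$, giving the claimed equality. One subtlety: $G_{0,0}$ is unbounded $\mathcal L^1\to(\mathcal L^1)^*$ only in the weak sense, so I would be careful to justify moving $G_{0,0}$ across the pairing — this is legitimate because $G_{0,2}u_1$ lies in a space where $H_0$ acts and $G_{0,0}H_0$ is the identity modulo the finite-dimensional projection $\Pi_0$, all of which is controlled by the decomposition \eqref{170803} reducing everything to the single-half-line case.

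The cleanest route, and the one I would actually write, is to reduce to \cite[Section~V]{IJ2} via the direct sum decomposition \eqref{170803}: on the finite part $K$ we have $G_{0,0}=h_0^{-1}$, $G_{0,2}=h_0^{-3}$, and the identity $\langle u_2,h_0^{-3}u_1\rangle=-\langle h_0^{-1}u_2,h_0^{-1}u_1\rangle$ would require a sign that does not hold for $h_0>0$ — so in fact the contribution must be handled by noting $G_{0,2}=(-1)h_0^{-3}$ from Proposition~\ref{17080221}, giving exactly the minus sign, and $\langle h_0^{-1}u_2,h_0^{-1}u_1\rangle=\langle u_2,h_0^{-2}u_1\rangle$, while $G_{0,2}$ on $K$ equals $-h_0^{-3}$, so these do \emph{not} match termwise — which means the hypothesis \eqref{zero-cond} plus $u_1,u_2\in\mathcal L^4$ must force the $K$-components to contribute nothing extra, or rather the whole statement is really about the ray parts where it reduces to the corresponding $\mathbb N$-lemma in \cite{IJ2}. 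Thus the main obstacle I anticipate is bookkeeping the $K$-component versus the ray-components correctly and confirming that \eqref{zero-cond} is precisely what is needed to make $G_{0,0}u_1\in\mathcal L^2$ on every ray so that the $\mathbb N$-analogue from \cite{IJ2} applies verbatim; the ray computation itself is the routine telescoping/summation-by-parts argument with the kernels \eqref{G00} and \eqref{G02}, which I would not reproduce in full.
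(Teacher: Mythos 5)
Your overall strategy---use Lemma~\ref{12.11.24.18.24} to get $G_{0,0}u_1\in\mathcal L^2$ from \eqref{zero-cond}, then split via \eqref{170803} into the $K$-component and the ray components and reduce the latter to the half-line lemma of \cite{IJ2}---is the paper's strategy. But your treatment of the $K$-component is wrong, and the confusion you run into there is of your own making: Proposition~\ref{17080221} gives $g_{0,2j}=(-1)^jh_0^{-j-1}$, so the coefficient of $\kappa^2$ is $g_{0,2}=-h_0^{-2}=-g_{0,0}^2$, not $\pm h_0^{-3}$. With the correct value the $K$-identity $\langle u_2,g_{0,2}u_1\rangle=-\langle g_{0,0}u_2,g_{0,0}u_1\rangle$ is immediate from self-adjointness of $h_0^{-1}$, with no help needed from \eqref{zero-cond}. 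Your fallback---that \eqref{zero-cond} ``must force the $K$-components to contribute nothing extra'' or that ``the whole statement is really about the ray parts''---cannot be right: $\mathbf n^{(\alpha)}$ is supported on $L_\alpha$, so \eqref{zero-cond} imposes no constraint whatsoever on $u_1|_K$, and both sides of the identity have a generically nonzero $K$-contribution; they simply agree.

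The second gap is on the rays: you assert the $\mathbb N$-analogue from \cite{IJ2} applies ``verbatim,'' but it does not, and this is the one substantive point of the paper's proof. The half-line lemma \cite[Lemma~IX.1]{IJ2} rests on \cite[Lemma~4.16]{IJ1}, whose hypotheses include vanishing moments that are \emph{not} available here (the present lemma assumes only $\langle\mathbf n^{(\alpha)},u_1\rangle=0$ and nothing about $u_2$ beyond $u_2\in\mathcal L^4$); the paper therefore supplies a relaxed version, Lemma~\ref{12.12.27.15.3} in Appendix~\ref{17081223}, before the reduction goes through. Finally, your alternative ``algebraic'' route via $H_0G_{0,2}=-G_{0,0}$ and $G_{0,0}H_0=\Pi_0$ is plausible in outline, but the entire difficulty is hidden in the step where you apply $\Pi_0$ to $G_{0,2}u_1$: one must verify that $G_{0,2}u_1$ actually lies in the domain $\mathbb C\mathbf n\oplus\mathbb C\mathbf 1\oplus\mathcal L^1$ of $G_{0,0}H_0$ from Lemma~\ref{13.1.18.6.0}, i.e.\ that its non-constant part is summable with weight; with only $u_1\in\mathcal L^4$ and \eqref{zero-cond} this is exactly the kind of borderline decay estimate the kernel computation is designed to control, and ``one checks the relevant terms drop out'' does not discharge it.
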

\begin{proof}
Due to the decomposition \eqref{170803} we can discuss separately 
on $K$ and $L_\alpha$, $\alpha=1,\dots,N$.
The assertion on $K$ is obvious by \eqref{170803}. 
As for each $L_\alpha$, we can discuss along the same lines as \cite[Lemma~I\hspace{-.1em}X.1]{IJ2}. 
However, since our assumption here is slightly relaxed,  
we can not directly use \cite[Lemma~4.16]{IJ1} as in \cite[Lemma~I\hspace{-.1em}X.1]{IJ2},
and we need to accordingly modify the assumption of \cite[Lemma~4.16]{IJ1}.  
To avoid notational confusion, we will provide this modification
later in Lemma~\ref{12.12.27.15.3}. 
With this modification we can prove the assertion similarly to \cite[Lemma~I\hspace{-.1em}X.1]{IJ2}. 
We omit the rest of the proof.
\end{proof}

\begin{remark*}
The above relaxed version is necessary not in the proof of Theorem~\ref{thm-ex2} 
but in the proof of Theorem~\ref{thm-ex3} when we prove \eqref{1708182228}. 
This is also the case in \cite{IJ2}; 
This is a small error in \cite{IJ2},
but can be corrected by using Lemma~\ref{12.12.27.15.3}. 
\end{remark*}

\begin{proof}[Proof of Theorem~\ref{thm-ex2}]
We repeat the former part of the proof of Theorem~\ref{thm-ex1}.
By the assumption and Corollary~\ref{160616438}
the leading operator $M_0$ from \eqref{M-expand} is not invertible
in $\mathcal B(\mathcal K)$. 
Write the expansion \eqref{M-expand} in the form \eqref{ex1-M-expand},
let $Q$ be the orthogonal projection onto $\mathop{\mathrm{Ker}} M_0$, and 
define $m(\kappa)$ as \eqref{def-J0}.
Then by \cite[Proposition~A.2]{IJ2} we have the formula \eqref{16082255}. 
The operator $m(\kappa)$ has the expansion \eqref{16082317}
with the coefficients given by \eqref{1608231743}--\eqref{1608231746},
but in this case we actually have
\begin{align}\label{am0b}
m_0=0,
\quad
m_1=QM_2Q,
\quad 
m_2=0.
\end{align}
In fact, by the assumption and Proposition~\ref{13.1.16.2.51} we have
\begin{align}
Qv^*\mathbf n^{(\alpha)}=0\quad \text{for }\alpha=1,\dots,N, 
\label{17081122}
\end{align}
and this leads to, by \eqref{m-expand}, \eqref{170803} and \eqref{G01},
\begin{align}
QM_1=M_1Q=0.
\label{16082220}
\end{align}
Hence \eqref{am0b} follows by \eqref{1608231743}--\eqref{1608231746} and \eqref{16082220}.
Now we note that then the operator $m_1$ has to be invertible in 
$\mathcal B(Q\mathcal K)$.
Otherwise, we can apply \cite[Proposition~A.2]{IJ2} once more,
but this leads to a singularity of order 
$\kappa^{-j}$, $j\ge 3$, in the expansion of $R(\kappa)$,
which contradicts the general estimate 
\begin{align}
\|R(\kappa)\|_{\mathcal B(\mathcal H)}\le \bigl[\mathop{\mathrm{dist}}(-\kappa^2,\sigma(H))\bigr]^{-1}.
\label{170829}
\end{align}
Hence  the Neumann series 
provides an expansion of $m(\kappa)^\dagger$ of the form
\begin{equation}
m(\kappa)^\dagger=\sum_{j=-1}^{\beta-5}\kappa^jA_j+\mathcal{O}(\kappa^{\beta-4}),\quad
A_j\in\mathcal B(Q\mathcal K),
\label{ex1-m-expandb}
\end{equation}
with 
\begin{align*}
&A_{-1}=m_1^{\dagger},\quad
A_0=-m_1^\dagger m_2m_1^\dagger
,\quad
A_1=m_1^\dagger \bigl(-m_3+ m_2m_1^\dagger m_2\bigr)m_1^\dagger 
.
\end{align*}
These are actually simplified by \eqref{am0b} as
\begin{align}
&A_{-1}=m_1^{\dagger},\quad
A_0=0,\quad
A_1=-m_1^\dagger m_3m_1^\dagger
.
\label{1608252242}
\end{align}
The Neumann series provides  an expansion of $(M(\kappa)+Q)^{-1}$ 
in the same form as \eqref{1608225}
with the same coefficients $B_j$ given there.
Now we insert the expansions \eqref{ex1-m-expandb} and \eqref{1608225} 
into the formula \eqref{16082255},
and then 
\begin{align}
M(\kappa)^{-1}
=\sum_{j=-2}^{\beta-6}\kappa^jC_j+\mathcal O(\kappa^{\beta-5});
\quad
C_j=B_j
+\sum_{\genfrac{}{}{0pt}{}{j_1\ge 0,j_2\ge -1,j_3\ge 0}{j_1+j_2+j_3=j+1}}
B_{j_1}A_{j_2}B_{j_3}
\label{160822553b}
\end{align}
with $B_{-2}=B_{-1}=0$.
We then insert the expansions \eqref{free-expan} with $N=\beta-4$ and \eqref{160822553b}
into the formula \eqref{second-resolvent}.
Finally we obtain the expansion
\begin{align}
R(\kappa)&=\sum_{j=-2}^{\beta-6}\kappa^j
G_j+\mathcal O(\kappa^{\beta-5});\quad
G_j=G_{0,j}
-\sum_{\genfrac{}{}{0pt}{}{j_1\ge 0,j_2\ge -2,j_3\ge 0}{j_1+j_2+j_3=j}}
G_{0,j_1}vC_{j_2}v^*G_{0,j_3}
\label{170820157}
\end{align}
with $G_{0,-2}=G_{0,-1}=0$. Hence we obtain the expansion \eqref{expand-second}.

Let us compute the coefficients $G_{-2}$ and $G_{-1}$.
We can use the above expressions of the coefficients 
to write 
\begin{align}
G_{-2}&
=
-G_{0,0}vC_{-2}v^*G_{0,0}
=
-G_{0,0}v m_1^{\dagger}v^*G_{0,0}
=-z (Qv^*G_{0,2}vQ)^{\dagger}z^*.
\label{160826}
\end{align}
By the self-adjointness of $G_{-2}$, \eqref{160826}, Proposition~\ref{13.1.16.2.51} 
and the assumptions we can see that 
\begin{align}
\mathop{\mathrm{Ran}}G_{-2}=(\mathop{\mathrm{Ker}}G_{-2})^\perp
\subset \mathcal E=\mathsf E.
\label{17081823}
\end{align}
In addition, 
for any $\Psi\in \mathsf E$ 
we can write $\Psi=z\Phi=-G_{0,0}v\Phi$ for some 
$\Phi\in Q\mathcal K$ by Proposition~\ref{13.1.16.2.51},  
so that by \eqref{17081122} and Lemma~\ref{lemma24} 
\begin{align*}
\langle \Psi,G_{-2}\Psi\rangle
&=
-\bigl\langle G_{0,0}v\Phi,G_{0,0}v (Qv^*G_{0,2}vQ)^{\dagger}z^*\Psi\bigr\rangle
=
\|\Psi\|_{\mathcal H}^2
.
\end{align*}
This implies that $G_{-2}$ is unitary on $\mathsf E$,
and in particular that the inclusion in \eqref{17081823} is in fact an equality.
Similarly we can show that $G_{-2}G_{-2}=G_{-2}$,
and hence $G_{-2}$ is the orthogonal bound projection $\mathsf P$, as asserted in \eqref{ex2-G-2}.

As for $G_{-1}$, 
we can first write 
\begin{align}
\begin{split}
G_{-1}
&=
-G_{0,0}vC_{-1}v^*G_{0,0}
-G_{0,0}vC_{-2}v^*G_{0,1}
-G_{0,1}vC_{-2}v^*G_{0,0}.
\end{split}
\label{160822203}
\end{align}
If we make use of the zero operators in \eqref{am0b}, \eqref{16082220}
and \eqref{1608252242},
we can easily see that $G_{-1}=0$ from \eqref{160822203}. 
We obtain the latter assertion of \eqref{ex2-G-2}.
\end{proof}

\subsection{Exceptional threshold of the third kind}\label{1608241729}

Finally we prove Theorem~\ref{thm-ex3}.
Compared with the proof of Theorem~\ref{thm-ex2},
this case needs once more application of the inversion formula 
\cite[Proposition~A.2]{IJ2},
and computations get slightly more complicated.

\begin{proof}[Proof of Theorem~\ref{thm-ex3}]
Let us repeat arguments of the previous proof to some extent.
We write the expansion \eqref{M-expand} in the form \eqref{ex1-M-expand},
let $Q$ be the orthogonal projection onto $\mathop{\mathrm{Ker}} M_0$, and 
define $m(\kappa)$ as \eqref{def-J0}.
Then by \cite[Proposition~A.2]{IJ2} we have the formula \eqref{16082255}, again. 
The operator $m(\kappa)$ 
has the expansion \eqref{16082317}
with coefficients given by \eqref{1608231743}--\eqref{1608231747},
but without \eqref{am0b}--\eqref{16082220}
by the assumption and Corollary~\ref{160616438}.
Now we apply the inversion formula \cite[Proposition~A.2]{IJ2}
to the operator $m(\kappa)$.
Write the expansion  \eqref{16082317} in the form
\begin{equation}
m(\kappa)=m_0+\kappa \widetilde m_1(\kappa).
\label{ex1-M-expandbbb}
\end{equation}
The leading operator $m_0$ is non-zero and not invertible in $\mathcal B(Q\mathcal K)$ 
by the assumption and Corollary~\ref{160616438}. 
Let $S$ be the orthogonal projection onto $\mathop{\mathrm{Ker}}m_0\subset Q\mathcal K$, 
and set 
\begin{equation}\label{def-J0bbb}
q(\kappa)=
\sum_{j=0}^{\infty}(-1)^j
\kappa^jS\widetilde{m}_1(\kappa)\bigl[(m_0^\dagger +S)\widetilde{m}_1(\kappa)\bigr]^{j}S.
\end{equation}
Then we have by \cite[Proposition~A.2]{IJ2} that 
\begin{equation}
m(\kappa)^\dagger =(m(\kappa)+S)^\dagger +\kappa^{-1}
(m(\kappa)+S)^\dagger  q(\kappa)^{\dagger}(m(\kappa)+S)^\dagger .
\label{16082255b}
\end{equation}
Using \eqref{16082317} and 
\eqref{ex1-M-expandbbb}, let us write \eqref{def-J0bbb} in the form
\begin{equation*}
q(\kappa)=\sum_{j=0}^{\beta-4}\kappa^jq_j+\mathcal{O}(\kappa^{\beta-3});
\quad 
q_j\in\mathcal B(S\mathcal K).
\end{equation*}
The first to fourth coefficients are given as 
\begin{align}\label{am0bb}
\begin{split}
q_0&=Sm_1S,\\
q_1&=S\bigl[m_2-m_1(m_0^\dagger +S)m_1\bigr]S,\\
q_2&=S\bigl[m_3-m_1(m_0^\dagger +S)m_2-m_2(m_0^\dagger +S)m_1
\\&\phantom{{}=S\bigl({}}{}
+m_1(m_0^\dagger +S)m_1(m_0^\dagger +S)m_1\bigr]S,
\\
q_3&=S\bigl[m_4
-m_1(m_0^\dagger +S)m_3-m_2(m_0^\dagger +S)m_2-m_3(m_0^\dagger +S)m_1
\\&\phantom{{}=S(({}}{}
+m_1(m_0^\dagger +S)m_1(m_0^\dagger +S)m_2
\\&\phantom{{}=S(({}}{}
+m_1(m_0^\dagger +S)m_2(m_0^\dagger +S)m_1
\\&\phantom{{}=S(({}}{}
+m_2(m_0^\dagger +S)m_1(m_0^\dagger +S)m_1
\\&\phantom{{}=S(({}}{}
-m_1(m_0^\dagger +S)m_1(m_0^\dagger +S)m_1(m_0^\dagger +S)m_1\bigr]S
.
\end{split}
\end{align}
Here we note that the leading operator $q_0$ has to be invertible
in $\mathcal B(S\mathcal K)$.
Otherwise, applying \cite[Proposition~A.2]{IJ2} once again,
we can show that $R(\kappa)$ has a singularity
of order $\kappa^{-j}$, $j\ge 3$ in its expansion.
This contradicts the general estimate \eqref{170829}.
Hence we can use the Neumann series 
to write $q(\kappa)^\dagger$, and obtain
\begin{equation}
q(\kappa)^\dagger=\sum_{j=0}^{\beta-4}\kappa^jA_j+\mathcal{O}(\kappa^{\beta-3}),\quad
A_j\in\mathcal B(S\mathcal K),
\label{ex1-m-expandbb}
\end{equation}
where 
\begin{align}
\begin{split}
A_0&=q_0^{\dagger},\\
A_1&=-q_0^\dagger q_1q_0^\dagger,\\
A_2&=q_0^\dagger \bigl(-q_2+ q_1q_0^\dagger q_1\bigr)q_0^\dagger,\\
A_3&=q_0^\dagger \bigl(-q_3
+ q_2q_0^\dagger q_1
+ q_1q_0^\dagger q_2
- q_1q_0^\dagger q_1q_0^\dagger q_1\bigr)q_0^\dagger 
.
\end{split}
\label{17082314}
\end{align}
We also write $(m(\kappa)+S)^\dagger$ 
employing the Neumann series as 
\begin{align}
(m(\kappa)+S)^\dagger
=\sum_{j=0}^{\beta-3}\kappa^jC_j+\mathcal{O}(\kappa^{\beta-2}),\quad
C_j\in\mathcal B(Q\mathcal K)
\label{1608225b}
\end{align}
with
\begin{align}
\begin{split}
C_0&=m_0^\dagger +S,\\
C_1&=-(m_0^\dagger +S) m_1(m_0^\dagger +S),\\
C_2&=(m_0^\dagger +S) \bigl[-m_2+m_1(m_0^\dagger +S)m_1\bigr](m_0^\dagger +S),\\
C_3&=
(m_0^\dagger +S) \bigl[-m_3
+m_1(m_0^\dagger +S)m_2
+m_2(m_0^\dagger +S)m_1
\\&\phantom{{}={}(m_0^\dagger +S) \bigl[{}}{}
-m_1(m_0^\dagger +S)m_1(m_0^\dagger +S)m_1\bigr](m_0^\dagger +S)
.
\end{split}
\label{1708231451}
\end{align}
We first insert the expansions 
\eqref{ex1-m-expandbb} and \eqref{1608225b} into \eqref{16082255b}:
\begin{align}
m(\kappa)^\dagger
=\sum_{j=-1}^{\beta-5}\kappa^jD_j+\mathcal O(\kappa^{\beta-4});
\quad
D_j=C_j
+\sum_{\genfrac{}{}{0pt}{}{j_1\ge 0,j_2\ge 0,j_3\ge 0}{j_1+j_2+j_3=j+1}}
C_{j_1}A_{j_2}C_{j_3}
\label{160822553bb}
\end{align}
with $C_{-1}=0$.
Next, noting that we have an expansion of 
$(M(\kappa)+Q)^{-1}$ of the same form as \eqref{1608225},
we insert the expansions \eqref{160822553bb} and \eqref{1608225} 
into \eqref{16082255}:
\begin{align}
M(\kappa)^{-1}
=\sum_{j=-2}^{\beta-6}\kappa^jE_j+\mathcal O(\kappa^{\beta-5});
\quad
E_j=B_j
+\sum_{\genfrac{}{}{0pt}{}{j_1\ge 0,j_2\ge -1,j_3\ge 0}{j_1+j_2+j_3=j+1}}
B_{j_1}D_{j_2}B_{j_3}
\label{160822553bbb}
\end{align}
with $B_{-2}=B_{-1}=0$. We finally 
insert the expansions \eqref{free-expan} with $N=\beta-4$ and \eqref{160822553bbb}
into \eqref{second-resolvent},
and then obtain the expansion
\begin{align}
R(\kappa)&=\sum_{j=-2}^{\beta-6}\kappa^j
G_j+\mathcal O(\kappa^{\beta-5});\quad
G_j=G_{0,j}
-\sum_{\genfrac{}{}{0pt}{}{j_1\ge 0,j_2\ge -2,j_3\ge 0}{j_1+j_2+j_3=j}}
G_{0,j_1}vE_{j_2}v^*G_{0,j_3}
\label{17082214}
\end{align}
with $G_{0,-2}=G_{0,-1}=0$. Hence the expansion \eqref{expand-third} is obtained.

Next we compute the coefficients $G_{-2}$ and $G_{-1}$.
Let us start with $G_{-2}$.
Unfolding the above expressions, we can see with ease that 
\begin{align*}
G_{-2}
&=-G_{0,0}vE_{-2}v^*G_{0,0}\\
&=-G_{0,0}vq_0^\dagger v^*G_{0,0}\\
&=-G_{0,0}v\bigl(SM_2S-SM_1(M_0^\dagger +S)M_1S\bigr)^\dagger v^*G_{0,0}
.
\end{align*}
By $Sm_0=0$ and the expression \eqref{160817219} of $m_0$ it follows that
\begin{align}
Sv^*\mathbf n^{(\alpha)}=SQv^*\mathbf n^{(\alpha)}=0\quad
\text{for }\alpha=1,\dots,N.
\label{16082320}
\end{align}
Hence we have 
\begin{align*}
G_{-2}
=-G_{0,0}v(Sv^*G_{0,2}vS)^\dagger v^*G_{0,0}
.
\end{align*}
Then we can verify $G_{-2}=\mathsf P$ in exactly the same manner as in the 
proof of Theorem~\ref{thm-ex2}. We obtain the former identity of \eqref{G-1}.

As for $G_{-1}$, it requires a slightly longer computations,
and we proceed step by step.
We can first write, using $A_*,B_*,C_*,D_*,E_*$ only, 
\begin{align*}
G_{-1}
&=
-G_{0,0}vE_{-1}v^*G_{0,0}
-G_{0,0}vE_{-2}v^*G_{0,1}
-G_{0,1}vE_{-2}v^*G_{0,0}
\\&=
-G_{0,0}v\Bigl(B_0\bigl(C_0+C_0A_1C_0+C_0A_0C_1+C_1A_0C_0\bigr)B_0
\\&\phantom{{}=-G_{0,0}v\Bigl(}{}
+B_0C_0A_0C_0B_1+B_1C_0A_0C_0B_0\Bigr)v^*G_{0,0}
\\&\phantom{{}={}}{}
-G_{0,0}vB_0C_0A_0C_0B_0v^*G_{0,1}
-G_{0,1}vB_0C_0A_0C_0B_0v^*G_{0,0}.
\end{align*}
Next, we implement the identities $B_0C_*=C_*B_0=C_*$ and $C_0A_*=A_*C_0=A_*$,
insert expressions \eqref{17082314}, \eqref{1708212}, \eqref{1708231451} of $A_*,B_*,C_*$, 
and then use \eqref{16082320}:
\begin{align*}
G_{-1}
&=
-G_{0,0}v\Bigl(C_0+A_1+A_0C_1+C_1A_0
+A_0B_1+B_1A_0\Bigr)v^*G_{0,0}
\\&\phantom{{}={}}{}
-G_{0,0}vA_0v^*G_{0,1}
-G_{0,1}vA_0v^*G_{0,0}
\\&
=
-G_{0,0}v\Bigl(m_0^\dagger +S-q_0^\dagger q_1q_0^\dagger
-q_0^\dagger  m_1(m_0^\dagger +S)-(m_0^\dagger +S) m_1q_0^\dagger
\\&\phantom{{}=-G_{0,0}v\Bigl(}{}
-q_0^\dagger M_1(M_0^\dagger +Q)
-(M_0^\dagger +Q) M_1q_0^\dagger \Bigr)v^*G_{0,0}
\\&\phantom{{}={}}{}
-G_{0,0}vq_0^\dagger v^*G_{0,1}
-G_{0,1}vq_0^\dagger v^*G_{0,0}
\\&
=
-G_{0,0}v\Bigl(m_0^\dagger +S-q_0^\dagger q_1q_0^\dagger
-q_0^\dagger  m_1(m_0^\dagger +S)-(m_0^\dagger +S) m_1q_0^\dagger
 \Bigr)v^*G_{0,0}.
\end{align*}
We further unfold $q_1$ and $m_1$ by \eqref{am0bb} and \eqref{1608231744}, 
and then use \eqref{16082320}:
\begin{align*}
G_{-1}
&=
-G_{0,0}v\Bigl(m_0^\dagger +S+q_0^\dagger M_2(m_0^\dagger +S)M_2q_0^\dagger
\\&\phantom{{}=-G_{0,0}v\Bigl(}{}
-q_0^\dagger  M_2(m_0^\dagger +S)
-(m_0^\dagger +S) M_2q_0^\dagger
 \Bigr)v^*G_{0,0}
\\&
=
-G_{0,0}v(I-q_0^\dagger M_2)m_0^\dagger (I-M_2q_0^\dagger)v^*G_{0,0}
\\&\phantom{{}={}}{}
-G_{0,0}v(I-q_0^\dagger M_2)S(I-M_2q_0^\dagger)v^*G_{0,0}.
\end{align*}
Since $SM_2S=Sm_1S=q_0$ by \eqref{16082320}, the last term can actually be removed.
We also use $M_2=v^*G_{0,2}$, $q_0^\dagger=-Uv^*\mathsf PvU$ and Lemma~\ref{lemma24},
and then obtain 
\begin{align}
G_{-1}
=
-(I-\mathsf P)G_{0,0}vm_0^\dagger v^*G_{0,0}(I-\mathsf P).
\label{1708182228}
\end{align}
By the assumption and Proposition~\ref{13.1.16.2.51}
we can find an orthogonal basis $\{\Phi_\gamma\}\subset Q\mathcal K\cap (S\mathcal K)^\perp$
such that 
\begin{align}
m_0^\dagger =-\sum_\gamma |\Phi_\gamma\rangle\langle\Phi_\gamma|,
\label{17082422}
\end{align}
so that we can write 
\begin{align*}
G_{-1}
=\sum_\gamma |\Psi_\gamma\rangle\langle \Psi_\gamma|;\quad 
\Psi_\gamma=-(I-\mathsf P)G_{0,0}v\Phi_\gamma
\in \mathcal E.
\end{align*}
Let us finally verify that $\{\Psi_\gamma\}\subset\mathcal E$ 
is an orthonormal resonance basis. 
Since $\{\Phi_\gamma\}\subset Q\mathcal K\cap (S\mathcal K)^\perp$ is a basis,
the set $\{[\Psi_\gamma]\}=\{[z\Phi_\gamma]\}$ of representatives forms a basis in $\mathcal E/\mathsf E$
due to Proposition~\ref{13.1.16.2.51}.
It is clear that $\{\Psi_\gamma\}$ is orthogonal to $\mathsf E$.
In addition, similarly to \eqref{17081214},
we have
\begin{align}
\delta_{\gamma\gamma'}
&
=
\|\Phi_\gamma\|^{-2} \bigl\langle \Phi_{\gamma'},m_0m_0^\dagger\Phi_\gamma\bigr\rangle
=\sum_{\alpha=1}^N\bigl\langle V\Psi_{\gamma'},\mathbf n^{(\alpha)}\bigr\rangle
\bigl\langle \mathbf n^{(\alpha)},V\Psi_\gamma\bigr\rangle.
\label{17082423}
\end{align}
Then by Lemma~\ref{12.11.24.18.24} the resonance basis 
$\{\Psi_\gamma\}\subset\mathcal E$ is orthonormal.
Hence we obtain the latter assertion of \eqref{G-1}, and we are done.
\end{proof}

\appendix

\section{Examples and applications}\label{17080222}

\subsection{Criterion for Assumption~\ref{assumV}}

Here we provide a criterion for Assumption~\ref{assumV}
in terms of weighted $\ell^2$-spaces. 
One application of this criterion is 
an abstract verification for $J=-\Delta_G-H_0$ to satisfy Assumption~\ref{assumV} 
particularly when we change a free operator, e.g., to \eqref{1709041306}.
We also refer to \cite[Appendix~B]{IJ1} for another criterion 
for Assumption~\ref{assumV}.

Here we let $(G,E_G)$ be a graph with rays as in Section~\ref{17081312},
and let us use the standard 
weighted space notation such as $\ell^{2,s}(G)$.

\begin{proposition}\label{170904}
Assume that $V\in\mathcal B(\mathcal H)$ is self-adjoint,
and that it extends to an operator in 
$\mathcal B\bigl(\ell^{2,-\beta-1/2-\epsilon}(G),\ell^{2,\beta+1/2+\epsilon}(G)\bigr)$ 
for some $\beta\ge 1$ and $\epsilon>0$.
Then $V$ satisfies Assumption~\ref{assumV} for the same 
$\beta$.
\end{proposition}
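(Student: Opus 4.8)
The plan is to factor the assumed bounded operator $V\colon\ell^{2,-\beta-1/2-\epsilon}(G)\to\ell^{2,\beta+1/2+\epsilon}(G)$ through a Hilbert space $\mathcal K$ in a way that produces the structure required by Assumption~\ref{assumV}, namely $V=vUv^*$ with $v$ injective into $\mathcal L^\beta=\ell^{1,\beta}(G)$ and $U$ a self-adjoint unitary on $\mathcal K$. First I would take $\mathcal K=\mathcal H=\ell^2(G)$ and use the polar decomposition of $V$ viewed as a self-adjoint operator on $\mathcal H$: write $V=U_0\lvert V\rvert$ with $U_0$ a partial isometry commuting with $\lvert V\rvert$, and set $v_0=U_0\lvert V\rvert^{1/2}$, so that $V=v_0U_1v_0^*$ where $U_1$ is the sign of $V$ (a self-adjoint partial isometry). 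To get a genuine unitary one enlarges $\mathcal K$ by an auxiliary copy of $\mathcal H$ on which $U$ acts as the identity, i.e.\ replace $\mathcal K$ by $\mathcal H\oplus\mathcal H$, $v=(v_0,0)$, $U=U_1\oplus(-I)$ restricted/adjusted on $\ker V$ so that $U$ is a self-adjoint unitary and $vUv^*=v_0U_1v_0^*=V$; injectivity of $v$ is arranged by further restricting $\mathcal K$ to $\overline{\operatorname{Ran}v^*}$, which does not change $vUv^*$. This is the standard bookkeeping already carried out in \cite[Appendix~B]{IJ1}, so I would cite that and only indicate the modifications.

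The substantive point is the mapping property $v\in\mathcal B(\mathcal K,\mathcal L^\beta)$, i.e.\ that $\lvert V\rvert^{1/2}$ (equivalently $v_0$) maps $\ell^2(G)$ into $\ell^{1,\beta}(G)$. The key step is to show that the hypothesis forces $\lvert V\rvert^{1/2}$ to map $\ell^2(G)\to\ell^{2,\beta+1/2+\epsilon}(G)$: indeed, $V=v_0^*U_1v_0$ factoring the bounded map $\ell^{2,-\beta-1/2-\epsilon}\to\ell^{2,\beta+1/2+\epsilon}$, together with self-adjointness and the spectral theorem, gives that $\langle x\rangle^{\beta+1/2+\epsilon}\lvert V\rvert^{1/2}$ is bounded on $\ell^2(G)$, where $\langle x\rangle=(1+\lvert x\rvert^2)^{1/2}$ and $\lvert x\rvert$ denotes graph distance to a fixed origin (on each ray $\langle x\rangle\sim n$). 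This is the spectral-theoretic lemma used in \cite{IJ1,IJ2}: if $A\ge 0$ and $B$ is bounded with $A=B^*CB$, then $A^{1/2}=WC'B$ with $W$ a contraction, hence $wA^{1/2}$ is bounded whenever $wB$ is, for a weight $w$.

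Finally I would convert the $\ell^2$-weighted bound into the $\ell^1$-weighted bound by Cauchy–Schwarz: for $u\in\ell^2(G)$,
\begin{align*}
\sum_{x\in G}\langle x\rangle^{\beta}\,\bigl|(\lvert V\rvert^{1/2}u)[x]\bigr|
&\le\Bigl(\sum_{x\in G}\langle x\rangle^{-1-2\epsilon}\Bigr)^{1/2}
\Bigl(\sum_{x\in G}\langle x\rangle^{2\beta+1+2\epsilon}\,\bigl|(\lvert V\rvert^{1/2}u)[x]\bigr|^2\Bigr)^{1/2},
\end{align*}
and the first factor is finite because $G$ consists of a finite graph and finitely many rays, on which $\sum_n n^{-1-2\epsilon}<\infty$. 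This gives $\lvert V\rvert^{1/2}\in\mathcal B(\ell^2(G),\ell^{1,\beta}(G))=\mathcal B(\mathcal H,\mathcal L^\beta)$, hence $v\in\mathcal B(\mathcal K,\mathcal L^\beta)$, and the adjoint relation $V=vUv^*$ extends to $\mathcal B((\mathcal L^\beta)^*,\mathcal L^\beta)$ by duality. The main obstacle is the first step — carefully producing a self-adjoint \emph{unitary} $U$ rather than merely a sign, while keeping $v$ injective — but this is routine and identical to the construction in \cite[Appendix~B]{IJ1}, so the proof is essentially a reduction plus a one-line summation estimate exploiting the rigidity of the weights and the ray structure of $G$.
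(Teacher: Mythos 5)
There is a genuine gap at what you yourself identify as the substantive point: the claim that the hypothesis forces $\langle x\rangle^{\beta+1/2+\epsilon}\lvert V\rvert^{1/2}$ to be bounded on $\ell^2(G)$. The factorization-of-positive-operators lemma you invoke does not deliver this. What one actually gets is the following: writing $s=\beta+1/2+\epsilon$ and $W=\langle x\rangle^{s}V\langle x\rangle^{s}\in\mathcal B(\mathcal H)$, one has $V^2\le\|W\|^2\langle x\rangle^{-2s}$ as quadratic forms, and operator monotonicity of the square root then gives only $\lvert V\rvert\le\|W\|\langle x\rangle^{-s}$, i.e.\ $\lvert V\rvert^{1/2}\colon\ell^2\to\ell^{2,s/2}$ --- half the weight you need. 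The full-weight claim is in fact false in general: take far-separated sites $e_0$ (with $\langle x\rangle\sim1$) and $e_n$ (with $\langle x\rangle\sim n$) and set $V_n=n^{-s}\bigl(|e_0\rangle\langle e_n|+|e_n\rangle\langle e_0|\bigr)$; then $\langle x\rangle^{s}V_n\langle x\rangle^{s}$ has norm $O(1)$, but $\lvert V_n\rvert=n^{-s}\bigl(|e_0\rangle\langle e_0|+|e_n\rangle\langle e_n|\bigr)$, so $\langle x\rangle^{s}\lvert V_n\rvert^{1/2}$ has norm $n^{s/2}$. Summing such blocks over disjoint distant sites produces a self-adjoint $V$ satisfying the hypothesis for which $\lvert V\rvert^{1/2}$ does not map $\ell^2$ into $\ell^{2,s}$. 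So the polar decomposition of $V$ itself is the wrong object to factor through: taking the square root does not commute with weighted estimates.

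The repair is exactly what the paper does: conjugate by the weight \emph{before} taking the square root. Set $\widetilde V=\langle x\rangle^{\beta+1/2+\epsilon/2}V\langle x\rangle^{\beta+1/2+\epsilon/2}$, which is a compact self-adjoint operator on $\mathcal H$, diagonalize it as $\sum_j\lambda_j|u_j\rangle\langle u_j|$ with $\{u_j\}$ orthonormal in $\ell^2(G)$, and define $v=\langle x\rangle^{-\beta-1/2-\epsilon/2}\lvert\widetilde V\rvert^{1/2}P^{*}$ and $U=P\,\mathrm{sgn}(\widetilde V)\,P^{*}$ on $\mathcal K=(\mathop{\mathrm{Ker}}\widetilde V)^{\perp}$. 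Then $\lvert\widetilde V\rvert^{1/2}$ is bounded on $\ell^2$ for free, $v$ maps $\mathcal K$ into $\ell^{2,\beta+1/2+\epsilon/2}(G)$ with no loss, and your final Cauchy--Schwarz step (which is correct, and is the same embedding $\ell^{2,\beta+1/2+\epsilon/2}\hookrightarrow\ell^{1,\beta}$ the paper uses) finishes the argument. Your bookkeeping for making $U$ a self-adjoint unitary and $v$ injective (restricting to the orthogonal complement of the kernel) is fine and matches the paper; also note that the correct symmetric factorization is $V=\lvert V\rvert^{1/2}\,\mathrm{sgn}(V)\,\lvert V\rvert^{1/2}$, so $v_0$ should be $\lvert V\rvert^{1/2}$ rather than $U_0\lvert V\rvert^{1/2}$.
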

\begin{proof}
Let $$\widetilde{V}=\langle x\rangle^{\beta+1/2+\epsilon/2}
V\langle x\rangle^{\beta+1/2+\epsilon/2},$$
where 
$\langle x\rangle=1$ for $x\in K$ and $\langle x\rangle=(1+n^2)^{1/2}$
for $x=n\in L_\alpha$. Then $\widetilde{V}$ is a compact self-adjoint operator on $\mathcal H=\ell^2(G)$, and hence we can find a
bounded and at most countable set $\{\lambda_j\}\subset\mathbb R$
of its eigenvalues, possibly accumulating only on $0$,
and an orthonormal system  $\{u_j\}\subset \mathcal H$ 
of the associated eigenfunctions such that 
$$\widetilde V=\sum_{j}\lambda_j|u_j\rangle\langle u_j|
.$$ 
Let $P$ denote the orthogonal projection onto 
$\mathcal K=(\mathop{\mathrm{Ker}} \widetilde{V})^{\perp}$.
We view $P$ as a map from $\mathcal H$ to $\mathcal K$, 
and $P^{\ast}$ as a map from $\mathcal K$ to $\mathcal H$. 
Set 
\begin{align*}
v&=\langle x\rangle^{-\beta-1/2-\epsilon/2}
\biggl(\sum_{j}|\lambda_j|^{1/2}|u_j\rangle\langle u_j|\biggr)P^{\ast},
\\
U&=P\biggl(\sum_{j}(\mathop{\mathrm{sgn}}\lambda_j)|u_j\rangle\langle u_j|\biggr)P^{\ast}.
\end{align*}
Then it follows that 
$v\in\mathcal B(\mathcal K,\ell^{2,\beta+1/2+\epsilon/2}(G))$ is injective,
that $U\in\mathcal B(\mathcal K)$ is self-adjoint and unitary,
and that 
$$V=vUv^*.$$
Since $\ell^{2,\beta+1/2+\epsilon/2}(G)$ can be continuously embedded into 
$\ell^{1,\beta}(G)$,
we obtain the assertion.
\end{proof}

\subsection{Infinite star graph}\label{17081323}

Here we consider an infinite star graph,
and see how the free Laplacian on it fits into the setting of this paper.

Under the notation of Section~\ref{17081312}
let us consider the case where the graph $(K,E_0)$ is a single point without edges:
$K=\{0\}$, $E_0=\emptyset$.
Then the graph $G$ defined by \eqref{17081313} with $K=\{0\}$ is called 
an \textit{infinite star graph}. 
Note that it coincides with the discrete half-lines $\mathbb N$ if $N=1$,
and with the discrete full line $\mathbb Z$ if $N=2$.
The Laplacian $-\Delta_G$ and the free operator $H_0$ 
on the infinite star graph $G$ are written as,
for any function $u\colon G\to\mathbb C$, 
\begin{align*}
(-\Delta_G u)[x]
&=
\left\{
\begin{array}{ll}
Nu[0]-u[1^{(1)}]-\dots-u[1^{(N)}]&\text{for }x=0\in K,\\
2u[n]-u[n+1]-u[n-1]&\text{for }x=n\in L_\alpha,
\end{array}
\right.
\intertext{and}
(H_0 u)[x]
&=
\left\{
\begin{array}{ll}
Nu[0]&\text{for }x=0\in K,\\
2u[1]-u[2]&\text{for }x=1\in L_\alpha,\\
2u[n]-u[n+1]-u[n-1]&\text{for }x=2,3,\ldots\in L_\alpha,
\end{array}
\right.
\end{align*}
respectively.
Then obviously by \eqref{170729} we have 
\begin{align*}
-\Delta_G =H_0+J,\quad
J=-\sum_{\alpha=1}^N\Bigl( |s\rangle\langle f_\alpha|
+|f_\alpha\rangle\langle s|\Bigr),
\end{align*}
where $s[x]=1$ if $x=0$ and $s[x]=0$ otherwise,
and $f_\alpha[x]=1$ if $x=1^{(\alpha)}$ and $f_\alpha[x]=0$ otherwise.

Let us see that the above jointing operator $J$ actually satisfies 
Assumption~\ref{assumV}.
In fact, we can choose $\mathcal K=\mathbb C^2$ and, in the corresponding matrix representations, 
$$
v=\left[\begin{array}{cc}|s\rangle&|f_1\rangle +\dots+ |f_N\rangle\end{array}\right]
,\quad
U=\left[\begin{array}{cc}0&-1\\-1&0\end{array}\right]
.
$$
Note that 
\begin{align}
v^*
=
\left[\begin{array}{c}\langle s|\\\langle f_1|+\dots+\langle f_N|\end{array}\right],
\label{17081314}
\end{align}
and hence we have the factorization
\begin{align}
J=vUv^*.
\label{17081315}
\end{align}
  
\begin{proposition}
The Laplacian $-\Delta_G$ on an infinite star graph 
has eigenspaces 
\begin{align*}
\widetilde{\mathcal E}
&=
\bigl\{u\in \mathbb C\mathbf n;\ \bigl\langle (f_1+\dots+f_N),u\bigr\rangle =0\bigr\}
+\mathcal E,\\ 
\mathcal E
&=\mathbb C\bigl(s+\mathbf 1^{(1)}+\dots+\mathbf 1^{(N)} \bigr),\\
\mathsf E&=\{0\}.
\end{align*}
In particular, the threshold $0$ is an exceptional point of the first kind.
\end{proposition}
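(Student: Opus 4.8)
The plan is to compute the three eigenspaces directly from the $\mathcal{K}$-space characterization in Proposition~\ref{13.1.16.2.51}, specialized to $\mathcal{K}=\mathbb{C}^2$ with the explicit $v$, $v^*$, $U$ given in \eqref{17081314}--\eqref{17081315}. First I would observe that on the infinite star graph the finite part is $K=\{0\}$, so $G_{0,0}$ acting on the $K$-summand is just multiplication by $N^{-1}$ (since $h_0=N\,\mathrm{id}$ here), while on each ray $L_\alpha$ it is the half-line operator with kernel $n\wedge m$ from \eqref{G00}. I would then identify $v^*\mathbf{n}^{(\alpha)}$: the first component pairs $\mathbf{n}^{(\alpha)}$ against $s$ (supported at $0\in K$), which vanishes since $\mathbf{n}^{(\alpha)}$ is zero on $K$; the second component pairs $\mathbf{n}^{(\alpha)}$ against $f_1+\dots+f_N$, i.e.\ evaluates $\mathbf{n}^{(\alpha)}$ at the boundary vertices $1^{(\beta)}$, giving $\mathbf{n}^{(\alpha)}[1^{(\alpha)}]=1$. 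Hence $v^*\mathbf{n}^{(\alpha)}=(0,1)^{\mathrm T}\in\mathbb{C}^2$ for every $\alpha$, so the subspace $\mathbb{C}v^*\mathbf{n}^{(1)}+\dots+\mathbb{C}v^*\mathbf{n}^{(N)}$ is the one-dimensional line $\mathbb{C}(0,1)^{\mathrm T}$, and the projection $P$ from \eqref{17080719} is the rank-one projection onto it.

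Next I would compute $M_0=U+v^*G_{0,0}v$ from \eqref{160817218} as a $2\times2$ matrix. The $(1,1)$ entry is $\langle s, G_{0,0}s\rangle$; since $s$ is supported at $0\in K$ and $h_0=N\,\mathrm{id}$, this is $N^{-1}$ — wait, more carefully one uses the decomposition \eqref{170803}, so $G_{0,0}$ restricted to $F(K)$ is $h_0^{-1}=N^{-1}$, giving $(1,1)=N^{-1}$. The off-diagonal entries come from $\langle s, G_{0,0}(f_1+\dots+f_N)\rangle$; but $s$ lives in $K$ and $f_\beta$ lives in $L_\beta$, and $G_{0,0}$ is block-diagonal, so these vanish, leaving $-1$ from $U$. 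The $(2,2)$ entry is $\langle f_1+\dots+f_N, G_{0,0}(f_1+\dots+f_N)\rangle=\sum_\beta g_{\beta,0}[1,1]=\sum_\beta (1\wedge 1)=N$. Thus
\begin{align*}
M_0=\begin{pmatrix} N^{-1} & -1\\ -1 & N\end{pmatrix},
\end{align*}
whose determinant is $1-1=0$; its kernel $Q\mathcal{K}$ is spanned by $(N,1)^{\mathrm T}$ (i.e.\ $(Nv^*s\text{-weight component})$), one-dimensional. In particular $Q\neq 0$, so by Corollary~\ref{160616438}\ref{12.12.19.6.30} the threshold is not a regular point. Then I would check $m_0$ from \eqref{160817219}: $m_0=-\sum_\alpha |Qv^*\mathbf{n}^{(\alpha)}\rangle\langle Qv^*\mathbf{n}^{(\alpha)}|=-N\,|Q(0,1)^{\mathrm T}\rangle\langle Q(0,1)^{\mathrm T}|$, and since the kernel direction $(N,1)^{\mathrm T}$ is not orthogonal to $(0,1)^{\mathrm T}$, one gets $Q(0,1)^{\mathrm T}\neq 0$; hence $m_0\neq 0$ and $m_0$ is invertible on the one-dimensional space $Q\mathcal{K}$. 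By Corollary~\ref{160616438}\ref{12.12.19.6.31} the threshold is an exceptional point of the first kind.

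Finally I would read off the eigenspaces from \eqref{12.12.19.6.56}--\eqref{12.12.19.6.58}. Since $m_0$ is invertible on $Q\mathcal{K}$, we have $Q\mathcal{K}\cap(P\mathcal{K})^\perp=\{0\}$, so $\mathsf{E}=z(\{0\})=\{0\}$. For $\mathcal{E}=z(Q\mathcal{K})$, I would compute $z\Phi$ for $\Phi$ spanning the one-dimensional $Q\mathcal{K}$ using \eqref{1708043} with $\Phi_2=0$: $z\Phi=-G_{0,0}v\Phi$, and tracing through $v\Phi$ (which is a combination of $s$ and $f_1+\dots+f_N$) together with the explicit action of $G_{0,0}$ — on $K$ it contributes a multiple of $s$, on each ray $G_{0,0}f_\alpha$ has kernel $n\wedge 1=1$, i.e.\ yields a multiple of $\mathbf{1}^{(\alpha)}$ — gives, up to scalar, $s+\mathbf{1}^{(1)}+\dots+\mathbf{1}^{(N)}$; this identifies $\mathcal{E}=\mathbb{C}(s+\mathbf{1}^{(1)}+\dots+\mathbf{1}^{(N)})$, after checking this function indeed solves $-\Delta_G\Psi=0$ directly (at $0$: $N\cdot 1 - N\cdot 1 = 0$; at $1^{(\alpha)}$: $2\cdot 1 - 1 - 1 = 0$). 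For $\widetilde{\mathcal{E}}$, by \eqref{12.12.19.6.56} the extra summand is $\mathbb{C}\mathbf{n}\cap\operatorname{Ker}V$; computing $V\mathbf{n}^{(\alpha)}=Jv$-action, equivalently $\langle (f_1+\dots+f_N),u\rangle$ must vanish for $u\in\mathbb{C}\mathbf{n}$ in the kernel of the relevant pairing, which gives the stated $\{u\in\mathbb{C}\mathbf{n}:\langle f_1+\dots+f_N,u\rangle=0\}$. The main obstacle is bookkeeping: keeping the $K$-summand (where $G_{0,0}=h_0^{-1}=N^{-1}$) and the ray-summands (where $G_{0,0}$ has kernel $n\wedge m$) straight through the direct-sum decomposition \eqref{170803}, and correctly evaluating the pairings defining $M_0$ and $z\Phi$; once these are pinned down the identification of the three spaces and the classification are immediate.
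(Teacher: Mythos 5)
Your proposal is correct and follows essentially the same route as the paper: compute $M_0=\begin{pmatrix}1/N&-1\\-1&N\end{pmatrix}$, identify $P\mathcal K=\mathbb C(0,1)^{\mathrm T}$ and $Q\mathcal K=\mathbb C(N,1)^{\mathrm T}$, and read everything off from Proposition~\ref{13.1.16.2.51} and Corollary~\ref{160616438}. The only step you leave implicit is that $P\mathcal K\cap(Q\mathcal K)^\perp=\{0\}$ (so that the $M_0^\dagger[\cdots]$ summand in \eqref{12.12.19.6.56} is absent and $\widetilde{\mathcal E}=\mathcal E\oplus(\mathbb C\mathbf n\cap\operatorname{Ker}V)$), but this is immediate from the two lines you have already computed.
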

\begin{remark*}
For $N=1$ the above Laplacian $-\Delta_G$ corresponds to the Neumann Laplacian 
on $\mathbb N$, cf.\ \cite[Section~I\hspace{-.1em}V]{IJ2}.
\end{remark*}
\begin{proof}
By the formulas \eqref{160817218}, \eqref{17081314}, \eqref{17081315}, \eqref{170803} and \eqref{G00}
we have 
\begin{align*}
M_0=\left[
\begin{array}{cc}
1/N&-1\\
-1& N
\end{array}
\right].
\end{align*}
Hence with $P$ from \eqref{17080719} we obtain
\begin{align*}
P\mathcal K=\mathbb C \left[
\begin{array}{c}
0\\ 1
\end{array}
\right],\quad
Q\mathcal K
=\mathbb C \left[
\begin{array}{c}
N\\ 1
\end{array}
\right].
\end{align*}
Then by Proposition~\ref{13.1.16.2.51} the assertions follow.
\end{proof}

\subsection{Analytic family persistently with resonance}

Next we present an example of a family of perturbed Schr\"odinger operators
that persistently have a zero resonance.

For simplicity let us discuss on the discrete line $\mathbb Z$.
Using notation from Appendix~\ref{17081323} with $N=2$,
we consider a two-parameter family of the Schr\"odinger operators
\begin{align*}
H_{E,\tau}=H_0+V_{E,\tau};\quad V_{E,\tau}=(E-2)|s\rangle\langle s|+\tau J,\quad E,\tau\in\mathbb R.
\end{align*}
If $(E,\tau)=(2,0)$, then $H_{2,0}=H_0$, and we already know that 
it does not have a resonance. We may let $(E,\tau)\neq (2,0)$.

\begin{proposition}
The operator $H_{E,\tau}$ has a zero resonance if and only if $2\tau^2-E=0$.
In particular, $\{H_{2(\alpha+1)^2,\alpha+1}\}_{|\alpha|<1}$ is a real analytic family of operators
each of which has a zero resonance.
\end{proposition}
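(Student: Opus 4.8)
The plan is to determine the zero resonance of $H_{E,\tau}$ from its generalized zero eigenspace via Proposition~\ref{13.1.16.2.51}: a zero resonance is present exactly when $\mathcal E\supsetneq\mathsf E$, equivalently when $G_{-1}=\mathcal P\neq0$. Since $V_{E,\tau}$ is supported on the three vertices $0$, $1^{(1)}$ and $1^{(2)}$, the equation $H_{E,\tau}\Psi=0$ reduces, away from these vertices, to the homogeneous recurrence $2\Psi[n^{(\alpha)}]-\Psi[(n+1)^{(\alpha)}]-\Psi[(n-1)^{(\alpha)}]=0$. Hence every $\Psi\in\widetilde{\mathcal E}$ is exactly affine on each ray, $\Psi[n^{(\alpha)}]=a_\alpha+b_\alpha n$, so $\widetilde{\mathcal E}$ is finite-dimensional and contained in $\mathbb Cs\oplus\mathbb C\mathbf n\oplus\mathbb C\mathbf 1$, with $\mathcal E$ cut out by $b_1=b_2=0$ and $\mathsf E$ by $a_1=a_2=b_1=b_2=0$.

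First I would write the two remaining scalar equations: at the centre $0$, using $(H_0u)[0]=2u[0]$ together with the explicit form of $V_{E,\tau}$, and at the ray-roots $1^{(\alpha)}$. Imposing $b_\alpha=0$, so $\Psi[n^{(\alpha)}]\equiv a_\alpha$, the equation at $1^{(\alpha)}$ forces $a_\alpha=\tau\Psi[0]$, and substituting this into the equation at $0$ collapses everything to $(E-2\tau^2)\Psi[0]=0$. Therefore, if $E\neq2\tau^2$ the only zero-energy solution with vanishing linear part is $\Psi=0$, so $\mathcal E=\mathsf E=\{0\}$ and $0$ is a regular point; while if $E=2\tau^2$ one gets the one-dimensional space $\mathcal E=\mathbb C\bigl(s+\tau\mathbf 1^{(1)}+\tau\mathbf 1^{(2)}\bigr)$, which for $\tau\neq0$ is bounded but not in $\ell^2$, hence a genuine resonance function with $\mathsf E=\{0\}$ (i.e.\ $0$ is an exceptional point of the first kind). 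This gives the stated equivalence. A second route, closer to Appendix~\ref{17081323}, factors $V_{E,\tau}=vUv^*$ on the span of $s$ and $(f_1+f_2)/\sqrt2$, computes $M_0=U+v^*G_{0,0}v$ from \eqref{G00}, \eqref{170803} and $h_0^{-1}=\tfrac12$, and applies Corollary~\ref{160616438}: one finds that $M_0$ fails to be invertible exactly when a $2\times2$ determinant equal to $\tfrac12(E-2\tau^2)$ vanishes, and that $m_0\neq0$ at those points when $\tau\neq0$.

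I expect the only real obstacle to be organizational: reading ``has a zero resonance'' as $\mathcal E\supsetneq\mathsf E$ (rather than merely ``$0$ is exceptional''), and keeping in view the single value $(E,\tau)=(0,0)$, which also satisfies $E=2\tau^2$ but where $s\in\ell^2$ and $0$ is an exceptional point of the second kind, so that a zero \emph{eigenvalue} rather than a resonance occurs there; this value plays no role below, and $(E,\tau)=(2,0)$ is excluded by hypothesis. The remaining computations are short and explicit.

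For the ``in particular'' claim, with $E(\alpha)=2(\alpha+1)^2$ and $\tau(\alpha)=\alpha+1$ one has $E(\alpha)\equiv2\tau(\alpha)^2$ and $\tau(\alpha)\neq0$ for $|\alpha|<1$, so by the equivalence each $H_{2(\alpha+1)^2,\alpha+1}$ has a zero resonance. Real-analyticity of the family is immediate, since $\alpha\mapsto V_{E(\alpha),\tau(\alpha)}=\bigl(2(\alpha+1)^2-2\bigr)|s\rangle\langle s|+(\alpha+1)J$ is a polynomial in $\alpha$ taking values in a fixed finite-dimensional subspace of $\mathcal B(\mathcal H)$, so $\alpha\mapsto H_0+V_{E(\alpha),\tau(\alpha)}$ is a real-analytic family of bounded self-adjoint operators on $\mathcal H$.
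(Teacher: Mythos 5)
Your proposal is correct, and your primary route is genuinely different from the paper's. The paper first rewrites $V_{E,\tau}=vUv^*$ with $U$ unitary (which requires diagonalizing the non-unitary matrix $\widetilde U$ and absorbing $|\alpha_\pm|^{1/2}$ into $v$), then computes $UM_0U$ explicitly and invokes Corollary~\ref{160616438}: $M_0$ fails to be invertible precisely when $\tfrac12E-\tau^2=0$, while $\mathsf E=\{0\}$ is checked separately by solving the difference equation. Your main argument bypasses the factorization and the intermediate operator $M_0$ altogether: since $V_{E,\tau}$ is finitely supported, every generalized eigenfunction is exactly affine on each ray, and the two remaining scalar equations at $0$ and at the ray-roots reduce the bounded-solution problem to $(E-2\tau^2)\Psi[0]=0$, giving $\mathcal E$, $\mathsf E$ (and hence the threshold type) explicitly. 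This is more elementary and self-contained; the paper's route is the one that survives when $V$ is not finitely supported. Your parenthetical ``second route'' is essentially the paper's proof. One genuine catch in your favour: at $(E,\tau)=(0,0)$ one has $E=2\tau^2$ yet $\mathcal E=\mathsf E=\mathbb Cs$, so a zero eigenvalue rather than a resonance occurs and the literal ``if and only if'' fails at that single point --- the paper's blanket claim that $\mathsf E=\{0\}$ is false there. Since $\tau=\alpha+1\neq0$ for $|\alpha|<1$, this does not affect the ``in particular'' clause, and your treatment of the analytic family is fine.
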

\begin{proof}
Let us first verify that $V_{E,\tau}$ satisfies Assumption~\ref{assumV}. 
We can write 
\begin{align*}
V_{E,\tau}=\widetilde v\widetilde U\widetilde v^*
\end{align*}
with 
$$
\mathcal K=\mathbb C^2,\quad
\widetilde v=\left[\begin{array}{cc}|s\rangle&|f_1\rangle + |f_2\rangle\end{array}\right]
,\quad
\widetilde U=\left[\begin{array}{cc}E-2&-\tau\\-\tau&0\end{array}\right]
,
$$
but this decomposition does not satisfy Assumption~\ref{assumV},
since $\widetilde U$ is not unitary.
Let us rewrite it as follows: 
We can find an orthogonal matrix $Z$ such that 
\begin{align*}
Z^{-1}\widetilde U Z=\left[\begin{array}{cc}\alpha_+&0\\0&\alpha_-\end{array}\right];\quad
\alpha_\pm=E-2\pm \sqrt{(E-2)^2+4\tau^2}.
\end{align*}
Then we have 
\begin{align*}
U:=A^{-1}Z^{-1}\widetilde U ZA^{-1}=\left[\begin{array}{cc}1&0\\0&-1\end{array}\right];\quad
A=\left[\begin{array}{cc}\alpha_+^{1/2}&0\\0&(-\alpha_-)^{1/2}\end{array}\right],
\end{align*}
so that we can write 
\begin{align*}
V_{E,\tau}=vU v^*;\quad v=\widetilde vZA.
\end{align*}
Hence the perturbation $V_{E,\tau}$ certainly satisfies Assumption~\ref{assumV}.

Now let us investigate $M_0$.
Instead of $M_0$ itself, we compute 
\begin{align*}
UM_0U
&=
U+Uv^*G_{0,0}vU
\\&
=A^{-1}Z^*\widetilde U
\bigl(I
+\widetilde v^*G_{0,0}\widetilde v \widetilde U\bigr)ZA^{-1}
\\&
=A^{-1}Z^*
\widetilde U
\left[\begin{array}{cc}\tfrac12 E&-\tfrac12\tau\\-2\tau&1\end{array}\right]ZA^{-1}.
\end{align*}
Since $\widetilde U$, $Z$, $A$, and $U$ are invertible,
the operator $M_0$ is non-invertible if and only if 
$$
\det \left[\begin{array}{cc}\tfrac12 E&-\tfrac12\tau\\-2\tau&1\end{array}\right]
=\tfrac12E-\tau^2=0.
$$
Hence by Proposition~\ref{13.1.16.2.51}
the resonance eigenspace $\mathcal E$ is non-trivial 
if and only if $2\tau^2-E=0$.
On the other hand, by solving the difference equation $H_{E,\tau}u=0$ we 
can directly show that $\mathsf E=\{0\}$.
Hence the assertion follows.
\end{proof}

\subsection{Resonant dimension for the free Laplacian}

Let $(G,E_G)$ be a graph with rays,
constructed by jointing the discrete half-line $(L_\alpha,E_\alpha)$
to a finite graph $(K,E_0)$ at the vertex $x_\alpha$
for $\alpha=1,\dots,N$, as in Section~\ref{17081312}.
With the same notation there we consider the free Laplacian
\begin{align*}
H=-\Delta_G=H_0+J,\quad
J=-\sum_{\alpha=1}^N\Bigl( |s_\alpha\rangle\langle f_\alpha|
+|f_\alpha\rangle\langle s_\alpha|\Bigr).
\end{align*}
Then we can count the dimension of eigenspaces as follows:

\begin{proposition}
For the above operator $H$ one has
\begin{align}
\dim\widetilde{\mathcal E}
&=N-\dim\mathcal E,
\label{17090113}
\\
\dim\mathcal E
&=\#\bigl\{x_\alpha;\ \langle s_\alpha,g_{0,0}s_\alpha\rangle
=[\#\{\beta;\ x_\beta=x_\alpha\}]^{-1}\bigr\},
\label{17090114}
\\
\dim\mathsf E&=0.
\label{17090115}
\end{align}
\end{proposition}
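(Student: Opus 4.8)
The plan is to apply Proposition~\ref{13.1.16.2.51} and Corollary~\ref{160616438} to the perturbation $J$, exactly as was done in Appendix~\ref{17081323}, but now keeping track of the multiplicities of the joints. First I would record the factorization $J = vUv^*$ explicitly. Writing $s_\alpha = s_{x_\alpha}$ for the characteristic function of $x_\alpha$ in $K$, I note that distinct half-lines may be attached to the same vertex, so the set $\{s_\alpha\}$ has repetitions; grouping by the common attachment vertex, let $y_1,\dots,y_p$ be the distinct vertices among $x_1,\dots,x_N$, with $y_i$ carrying $d_i = \#\{\beta;\ x_\beta = y_i\}$ half-lines. One natural choice is $\mathcal K = \mathbb C^{2N}$ (one $\mathbb C^2$-block per half-line) with $v = \bigl[\,|s_1\rangle\ \cdots\ |s_N\rangle\ |f_1\rangle\ \cdots\ |f_N\rangle\,\bigr]$ arranged so that $U$ is the block anti-diagonal involution pairing $|s_\alpha\rangle$ with $|f_\alpha\rangle$; but because $v$ must be injective on $\mathcal K$, it is cleaner to take $\mathcal K$ spanned by the $p$ \emph{distinct} functions $s_{y_1},\dots,s_{y_p}$ together with the $N$ functions $f_1,\dots,f_N$ (these $p+N$ functions are linearly independent in $\mathcal L^\beta$). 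With this choice $v^*$ reads off the values $\langle s_{y_i},\cdot\rangle$ and $\langle f_\alpha,\cdot\rangle$, and $J = vUv^*$ holds with a self-adjoint involution $U$ that still pairs each $s$-coordinate with the corresponding $f$-coordinates (a $y_i$-coordinate is paired, with weight $1$, against the sum of the $d_i$ coordinates $f_\beta$, $x_\beta = y_i$, and suitably normalized so $U^2 = I$).

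Next I would compute $M_0 = U + v^*G_{0,0}v$ using \eqref{170803} and \eqref{G00}. Since $G_{0,0}$ is block-diagonal over $K\oplus L_1\oplus\cdots\oplus L_N$, and $g_{0,0}[n,m] = n\wedge m$ on each $L_\alpha$, the only nonzero entries of $v^*G_{0,0}v$ among the $s$- and $f$-coordinates are: $\langle s_{y_i}, g_{0,0}s_{y_j}\rangle$ (from the $K$-block), $\langle f_\alpha, g_{\alpha,0} f_\alpha\rangle = g_{\alpha,0}[1,1] = 1$ (from the $L_\alpha$-block), and the cross terms $\langle s_{y_i}, \cdot\rangle$ vs $\langle f_\alpha,\cdot\rangle$ vanish because $s_{y_i}$ is supported in $K$ and $f_\alpha$ in $L_\alpha$ and $G_{0,0}$ does not mix the blocks. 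So $M_0$ is, up to the reordering, a $2\times 2$ block matrix whose structure mirrors the infinite-star computation in Appendix~\ref{17081323}: an $s$–$s$ block equal to $\bigl(\langle s_{y_i},g_{0,0}s_{y_j}\rangle\bigr)_{i,j}$, an $f$–$f$ block equal to the identity plus the $\langle f_\alpha,g_{\alpha,0}f_\alpha\rangle$ contributions, and off-diagonal blocks coming only from $U$. Computing $\mathop{\mathrm{Ker}} M_0$ then reduces, after eliminating the $f$-coordinates, to a linear-algebra condition on the $s$-coordinates alone — and by the same determinant manipulation as in Appendix~\ref{17081323} (Schur complement of the $f$-block) this condition decouples vertex-by-vertex precisely when the vertices are genuinely distinct, yielding that a $y_i$-coordinate lies in the kernel iff $\langle s_{y_i}, g_{0,0} s_{y_i}\rangle = d_i^{-1}$. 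This gives \eqref{17090114} via \eqref{12.12.19.6.57} and the bijection $z$: $\dim\mathcal E = \dim Q\mathcal K = \#\{i;\ \langle s_{y_i},g_{0,0}s_{y_i}\rangle = d_i^{-1}\}$, which is the same count as on the right of \eqref{17090114} written over $\alpha$.

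For \eqref{17090115} I would show $\mathsf E = \{0\}$, i.e. $Q\mathcal K \cap (P\mathcal K)^\perp = \{0\}$ by \eqref{12.12.19.6.58}. Equivalently, a bound eigenfunction would be a genuine $\ell^2$ solution of $-\Delta_G\Psi = 0$; but on each ray the difference equation $2u[n] - u[n+1] - u[n-1] = 0$ for $n\ge 2$ forces $u[n]$ affine in $n$ on $L_\alpha$, and the global equation at the boundary vertices links these to the finite part, so an $\ell^2$ (hence decaying) solution must be affine-and-decaying, i.e. eventually zero on each ray, whence (by unique continuation for $-\Delta_G$ along the connected graph, together with the equation at each vertex) $\Psi \equiv 0$. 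Alternatively, and more in keeping with the framework, one notes $m_0 = -\sum_\alpha |Qv^*\mathbf n^{(\alpha)}\rangle\langle Qv^*\mathbf n^{(\alpha)}|$ and must be shown to be \emph{invertible} on $Q\mathcal K$; this is the assertion that the threshold is an exceptional point of the first kind (never second or third), and it follows once one checks that the vectors $Qv^*\mathbf n^{(\alpha)}$ span $Q\mathcal K$, which in turn comes from the explicit description of $Q\mathcal K$ obtained in the previous step. Finally \eqref{17090113} is just the dimensional relation \eqref{170802}, namely $\dim(\widetilde{\mathcal E}/\mathcal E) = N - \dim(\mathcal E/\mathsf E) = N - \dim\mathcal E$ since $\mathsf E = \{0\}$. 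The main obstacle I anticipate is the bookkeeping in the second step: setting up $\mathcal K$ and $U$ so that $v$ is injective despite coincident attachment vertices, and then carrying out the Schur-complement reduction cleanly enough to see that the kernel condition genuinely decouples into the per-vertex conditions $\langle s_{y_i},g_{0,0}s_{y_i}\rangle = d_i^{-1}$; everything else is a direct application of the machinery already established.
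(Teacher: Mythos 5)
Your overall strategy is the same as the paper's: the difference‑equation argument on the rays plus connectedness of $K$ for \eqref{17090115}, the dimensional relation \eqref{170802} (with $\mathsf E=\{0\}$) for \eqref{17090113}, and the identification $\dim\mathcal E=\dim Q\mathcal K=\dim\mathop{\mathrm{Ker}}M_0$ followed by an explicit computation of $M_0$ for \eqref{17090114} — a computation the paper itself omits. The first two parts are fine. The gap is exactly in the step you flag as the main obstacle: the claim that the kernel condition for $M_0$ ``decouples vertex‑by‑vertex'' into the diagonal conditions $\langle s_{y_i},g_{0,0}s_{y_i}\rangle=d_i^{-1}$. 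With the normalized factorization $\mathcal K=\mathbb C^{2p}$, $v=\bigl[\,|s_{y_1}\rangle\cdots|s_{y_p}\rangle\ |F_1\rangle\cdots|F_p\rangle\,\bigr]$, $F_i=\sum_{x_\beta=y_i}f_\beta$, and $U$ the block anti‑diagonal involution (this, rather than keeping the $N$ separate $f_\alpha$'s, is what makes $U^2=I$ painless), one gets $M_0=\left(\begin{smallmatrix}A&-I_p\\-I_p&D\end{smallmatrix}\right)$ with $A_{ij}=\langle s_{y_i},g_{0,0}s_{y_j}\rangle$ and $D=\mathrm{diag}(d_i)$, hence $\dim\mathop{\mathrm{Ker}}M_0=\dim\mathop{\mathrm{Ker}}(I_p-DA)$. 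This is governed by the \emph{full} matrix $A$, whose off‑diagonal entries $(h_0^{-1})[y_i,y_j]$ do not vanish when distinct attachment vertices communicate through $K$; the dimension of $\mathop{\mathrm{Ker}}(I_p-DA)$ cannot be read off from the diagonal of $DA$, and no Schur‑complement manipulation makes it do so.

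Concretely, take $K=\{a,b\}$ with the single edge $\{a,b\}$ and one ray at each vertex, so that $G\cong\mathbb Z$ and $H=-\Delta_{\mathbb Z}$. Then $h_0=\left(\begin{smallmatrix}2&-1\\-1&2\end{smallmatrix}\right)$, so $\langle s_\alpha,g_{0,0}s_\alpha\rangle=2/3\neq 1=d^{-1}$ at both vertices and your per‑vertex count (i.e.\ the right‑hand side of \eqref{17090114}) is $0$; yet the constant function is a zero resonance of $-\Delta_{\mathbb Z}$, so $\dim\mathcal E=1$, and indeed $I_2-DA=\frac13\left(\begin{smallmatrix}1&-1\\-1&1\end{smallmatrix}\right)$ has the one‑dimensional kernel $\mathbb C(1,1)^T$. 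So the decoupling claim is false, and in fact \eqref{17090114} itself cannot be proved as stated: for any connected $K$, a bounded solution of $-\Delta_G\Psi=0$ is constant on each ray, its value at $x_\alpha$ equals that constant, hence $\Psi|_K$ is $\Delta_K$‑harmonic and therefore constant, giving $\dim\mathcal E=1$ identically, while the right‑hand side of \eqref{17090114} can be $0$. The diagonal criterion is reliable only when all rays meet $K$ at a single vertex ($p=1$), as in the star graph of Appendix~\ref{17081323}. What your computation actually yields — and what should replace the last step — is $\dim\mathcal E=\dim\mathop{\mathrm{Ker}}(I_p-DA)$.
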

\begin{proof}
We first note that, similarly to Appendix~\ref{17081323}, we can easily 
see that the operator $J$ satisfies Assumption~\ref{assumV} for any $\beta\ge 1$.
Then, in particular, \eqref{17090113} follows immediately by Proposition~\ref{13.1.16.2.51}.

Next we let $u\in \mathsf E$.
By solving the second difference equation $\Delta_Gu=0$ on $L_\alpha$
with $u[x]\to 0$ as $x=n^{(\alpha)}\to \infty$
we can deduce that $u[x]=0$ for any $x\in L_\alpha$.
This implies that $u|_K$ is a zero eigenfunction for $-\Delta_K$.
Then $u[x]$ has to be $0$ for all $x\in K$, too. 
Hence we obtain $\mathsf E=\{0\}$, or \eqref{17090115}.

By Proposition~\ref{13.1.16.2.51} and \eqref{17090115}
we have $\dim\mathcal E=\dim Q\mathcal K=\dim\mathop{\mathrm{Ker}}M_0$.
The last dimension can be computed by writing down $M_0$ explicitly
as in Appendix~\ref{17081323}. Then we can see \eqref{17090114}.
We omit the detail.
\end{proof}

\subsection{Spider web}

The generality of Assumption~\ref{assumV} allows us to deal with  
interactions between rays that decay but do not vanish at infinity.
Here let us consider a graph like ``spider web'',
which could be considered as a discretized model of the spherical coordinates.

Let $G$ be the infinite star graph considered in Appendix~\ref{17081323}.
We choose a weight function $w\colon \mathbb N\to\mathbb R$ that satisfies 
for some $\beta\ge 1$
\begin{align*}
\sum_{n\in\mathbb N} (1+n^2)^\beta|w[n]|<\infty.
\end{align*}
Then we define a operator $H_w$ as, for any function $u\colon G\to\mathbb C$,
\begin{align*}
(H_w u)[n,\alpha]
&
=2u[n,\alpha]-u[n+1,\alpha]-u[n-1,\alpha]
\\&\phantom{{}={}}{}
+w[n]\bigl(2u[n,\alpha]-u[n,\alpha+1]-u[n,\alpha-1]\bigr)
\intertext{for $n\ge 1$, and }
(H_w u)[0]
&
=Nu[0]-u[1,1]-\dots-u[1,N].
\end{align*}
Here we let $n^{(\alpha)}=(n,\alpha)$, and interpret $u[0,\alpha]=u[0]$, 
$u[n,0]=u[n,N]$ and $u[n,N+1]=u[n,1]$. 

The operator $H_w$ may be considered as the Laplacian on a graph like ``spider web'',
and this actually fits into our setting. 
To see this we 
define the \textit{spherical discrete Fourier transform} $\mathcal F$ as 
\begin{align*}
(\mathcal F u)[n,k]&=N^{-1/2}\sum_{\alpha=1}^Ne^{-2\pi i\alpha k/N}u[n,\alpha]\quad \text{for }n\ge 1
\end{align*}
and $(\mathcal F u)[0]=u[0]$. 
Its inverse $\mathcal F^*$ is given by 
\begin{align*}
(\mathcal F^*v)[n,\alpha]&=N^{-1/2}\sum_{k=1}^Ne^{2\pi i\alpha k/N}v[n,k]\quad \text{for }n\ge 1
\end{align*}
and $(\mathcal F^* v)[0]=v[0]$.
Then we have 
\begin{align*}
(\mathcal FH_w u)[n,k]
&=
2(\mathcal F u)[n,k]-(\mathcal F u)[n+1,k]-(\mathcal F u)[n-1,k]
\\&\phantom{{}={}}{}
+w[n]\bigl(2-2\cos(2\pi k/N)\bigr)(\mathcal F u)[n,k]
\intertext{for $n\ge 2$,}
(\mathcal FH_w u)[1,k]
&=
2(\mathcal F u)[1,k]-(\mathcal F u)[2,k]
+w[1]\bigl(2-2\cos(2\pi k/N)\bigr)(\mathcal F u)[1,k]
\intertext{for $k\neq N$,}
(\mathcal FH_w u)[1,N]
&=
2(\mathcal F u)[1,N]-(\mathcal F u)[2,N]-N^{1/2}(\mathcal F u)[0]
\intertext{and }
(\mathcal FH_w u)[0]
&=
(\mathcal Fu)[0]-N^{1/2}(\mathcal Fu)[1,N].
\end{align*}
This implies that $\mathcal FH_w\mathcal F^*$ splits into a 
direct sum of a Robin Schr\"odinger 
and $(N-1)$ number of Dirichlet Schr\"odinger operators on $\mathbb N$. 
As for a discrete Robin Schr\"odinger operator see \cite[Section~I\hspace{-.1em}V]{IJ2}.
This obviously satisfies Assumption~\ref{assumV}.

\section{Zeroth and first order coefficients}\label{170818}

In this appendix we compute the coefficients $G_0$ and $G_1$.
It follows from the general theory that 
$G_0$ is a Green operator for $H$, see Corollary~\ref{17081919},
but here we compute an explicit expression for it. 
On the other hand, $G_1$ is intimately related to the 
\textit{orthogonal non-resonance projection}, but they do not exactly coincide.
This can also be seen from Corollary~\ref{17081919} particularly when $G_{-1}\neq 0$.

The main results of this appendix are stated in 
Theorems~\ref{170820205}, \ref{17082015}, \ref{170820200} and \ref{17082017}.
They are considered as  part of the main results of the paper. 
However, their expressions and proofs are quite lengthy, hence are separated from Section~\ref{1707291915}.
We remark that the coefficients $G_0$ and $G_1$ were missing in \cite{IJ2} 
for the exceptional cases of the first and third kinds,
but they are completed here in a more general setting on a graph with rays.

\subsection{Orthogonal non-resonance projection}
Let us first formulate the \textit{orthonormal non-resonance projection},
cf.\ Definition~\ref{170819}. Recall that the notation $\overline{\Psi}\cdot\Psi_\gamma$ is defined in the paragraph before Definition~\ref{170819}.
\begin{definition}
We call a subset $\{\Psi_\gamma\}_{\gamma}\subset \widetilde{\mathcal E}$ 
a \textit{non-resonance basis}, if 
the set $\{[\Psi_\gamma]\}_{\gamma}$ of representatives 
forms a basis in $\widetilde{\mathcal E}/\mathcal E$.
It is said to be \textit{orthonormal}, if 
\begin{enumerate}
\item for any $\gamma$ and $\Psi\in\mathsf E$ one has 
$\overline{\Psi}\cdot\Psi_\gamma\in \mathcal L^0$ and $\langle\Psi,\Psi_\gamma\rangle=0$;
\item there exists an orthonormal system 
$\{c^{(\gamma)}\}_\gamma\subset \mathbb C^N$
such that for any $\gamma$
$$\Psi_\gamma
-\sum_{\alpha=1}^Nc_\alpha^{(\gamma)}\mathbf n^{(\alpha)}\in \mathbb C\mathbf 1\oplus\mathcal L^{\beta-2}.
$$
\end{enumerate}
The \textit{orthogonal non-resonance projection} $\widetilde{\mathcal P}$ 
\textit{associated with} $\{\Psi_\gamma\}_{\gamma}$ is defined as 
\begin{align*}
\widetilde{\mathcal P}=\sum_{\gamma}|\Psi_\gamma\rangle\langle\Psi_\gamma|.
\end{align*}
\end{definition}

\begin{remarks*}
\ 

1.
We will later see that leading coefficients of orthonormal resonance and non-resonance bases 
form an orthonormal basis of $\mathbb C^N$,
cf.\ \eqref{1708221320}, Theorems~\ref{17082015} and \ref{17082017}. 

2.
In contrast to the orthonormal bound and resonance projections, 
the orthogonal non-resonance projection does depend on choice of an orthonormal non-resonance basis.
We have not formulated orthogonality between resonance and non-resonance bases,
which should not be confused with the above remark.
There is a freedom of choice of coefficients of $\mathbf 1^{(\alpha)}$ for an 
orthonormal non-resonance basis. 
An appropriate formulation is missing.
\end{remarks*}

\subsection{Regular threshold}
\begin{theorem}\label{170820205}
Under the assumptions of Theorem~\ref{thm-reg} one has 
\begin{align}
G_0&
=G_{0,0}-G_{0,0} vM_0^\dagger v^*G_{0,0},\label{160822427}
\\
G_1&=-\widetilde{\mathcal P},\label{160822428}
\end{align}
where $\widetilde{\mathcal P}$ is uniquely determined due to $\mathcal E=\{0\}$.
\end{theorem}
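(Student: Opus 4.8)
The plan is to derive both formulas by pushing the regular-case expansion of Theorem~\ref{thm-reg} one order further and then identifying the coefficients. Recall from the proof of Theorem~\ref{thm-reg} that in the regular case $M_0$ is invertible in $\mathcal B(\mathcal K)$, so $M(\kappa)^{-1}$ is obtained via the Neumann series \eqref{160819} with coefficients \eqref{12.12.1.7.18}, and $R(\kappa)$ is given by \eqref{170820}. Thus $G_0$ and $G_1$ are simply read off from \eqref{170820}:
\begin{align*}
G_0&=G_{0,0}-G_{0,0}vA_0v^*G_{0,0}-G_{0,1}vA_0v^*G_{0,0}-G_{0,0}vA_0v^*G_{0,1},\\
G_1&=G_{0,1}-\!\!\!\sum_{j_1+j_2+j_3=1}\!\!\!G_{0,j_1}vA_{j_2}v^*G_{0,j_3}.
\end{align*}
First I would simplify $G_0$: since $A_0=M_0^{-1}$ and, by \eqref{170803}, the odd coefficient $G_{0,1}=0\oplus g_{1,1}\oplus\cdots$ while $v$ maps into $\mathcal L^\beta$ with $\beta\ge2$, one uses the mapping properties and the ranges in \eqref{170803}–\eqref{G01} to see that the cross terms $G_{0,1}vA_0v^*G_{0,0}$ and $G_{0,0}vA_0v^*G_{0,1}$ must be absorbed — more precisely, in the regular case these can be shown to vanish or to be incorporated so that the clean expression \eqref{160822427} remains. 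This is essentially the same bookkeeping as in \cite[Section~I\hspace{-.1em}X]{IJ2} on the half-line, now applied summand-wise via the decomposition \eqref{170803}; I would appeal to that reference for the half-line pieces and note the finite part $K$ contributes only through $g_{0,j}$, which are already built into $G_{0,j}$ for even $j$.

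For $G_1$, the point is to show the whole expression collapses to $-\widetilde{\mathcal P}$. The route is: (i) verify $HG_1=G_1H=-G_{0,0}vM_0^{-1}v^*G_{0,0}\cdot(\text{something})$ using Corollary~\ref{17081919}, namely $HG_1=G_1H=-G_{-1}=0$ in the regular case — wait, that only gives $HG_1=0$, i.e. $\mathop{\mathrm{Ran}}G_1\subset\widetilde{\mathcal E}$, hence $G_1=0$ since $\mathcal E=\{0\}$ forces... no: in the regular case $\widetilde{\mathcal E}$ need not be trivial (only $\mathcal E=\mathsf E=\{0\}$), so $\widetilde{\mathcal E}$ consists of non-resonance eigenfunctions growing like $\mathbf n^{(\alpha)}$. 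So $G_1$ is a genuinely nonzero operator landing in (a decaying modification of) $\widetilde{\mathcal E}$. The plan is then to compute $G_1$ explicitly from the formula above, using \eqref{G01}, \eqref{m-expand}, the identity $v^*G_{0,0}v=M_0-U$, and the definition \eqref{12.12.27.16.4} of $z$, to recognize that $G_1=-\sum_\gamma|\Psi_\gamma\rangle\langle\Psi_\gamma|$ where the $\Psi_\gamma$ form the orthonormal non-resonance basis obtained by applying $z$ (or rather the analogue built from $M_0^{-1}$ in place of $M_0^\dagger$) to an orthonormal basis of the relevant finite-dimensional space; uniqueness of $\widetilde{\mathcal P}$ here is because $\mathcal E=\{0\}$ means $\widetilde{\mathcal E}/\mathcal E=\widetilde{\mathcal E}$ and the orthonormality condition in the definition pins down the basis up to the irrelevant unitary freedom.

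The main obstacle is the identification step for $G_1$: turning the triple-sum $\sum_{j_1+j_2+j_3=1}G_{0,j_1}vA_{j_2}v^*G_{0,j_3}$ — which involves $A_1=-M_0^{-1}M_1M_0^{-1}$ with $M_1=v^*G_{0,1}v$ and the odd coefficient $G_{0,1}$ with kernel $-n\cdot m$ from \eqref{G01} — into the manifestly projection-like form $\sum_\gamma|\Psi_\gamma\rangle\langle\Psi_\gamma|$. Concretely one must show that $G_{0,1}$ acting through $v$ produces rank-$N$ structure keyed to the functions $\mathbf n^{(\alpha)}$, compute the resulting quadratic form, and match it against the Gram–Schmidt construction \eqref{170807}–\eqref{17080719} (with $M_0^\dagger$ replaced by $M_0^{-1}$). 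I would carry this out by following the half-line computation of \cite[Section~I\hspace{-.1em}X]{IJ2} verbatim on each $L_\alpha$-block, checking that the finite part $K$ contributes nothing to the singular-in-$\mathbf n$ structure (since $g_{0,1}$ does not appear — $G_{0,1}$ has a $0$ in the $K$-slot), and then reassembling. The orthonormality of the resulting non-resonance basis — conditions (1) and (2) of the definition of orthonormal non-resonance basis — is checked exactly as in \eqref{17081214}, using Lemma~\ref{12.11.24.18.24} together with $\mathsf E=\{0\}$, and independence from the choice of basis then gives \eqref{160822428}.
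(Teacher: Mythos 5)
Your reading of the coefficient formula \eqref{170820} contains an indexing error that then forces you into a false claim. The sum in \eqref{170820} runs over $j_1+j_2+j_3=j$, so for $j=0$ the only contribution is $(j_1,j_2,j_3)=(0,0,0)$ and one gets $G_0=G_{0,0}-G_{0,0}vA_0v^*G_{0,0}=G_{0,0}-G_{0,0}vM_0^{-1}v^*G_{0,0}$ immediately — that is the entire proof of \eqref{160822427}. The cross terms $G_{0,1}vA_0v^*G_{0,0}$ and $G_{0,0}vA_0v^*G_{0,1}$ that you put into $G_0$ belong to $G_1$ (they correspond to $j_1+j_2+j_3=1$), and your assertion that "in the regular case these can be shown to vanish or to be incorporated" is not correct: with $G_{0,1}=-\sum_\alpha|\mathbf n^{(\alpha)}\rangle\langle\mathbf n^{(\alpha)}|$ by \eqref{170803} and \eqref{G01}, the term $G_{0,1}vM_0^{-1}v^*G_{0,0}$ is generically nonzero, and indeed it is an essential ingredient of $G_1$: the paper's computation is $G_1=G_{0,1}-G_{0,1}vM_0^{-1}v^*G_{0,0}+G_{0,0}vM_0^{-1}M_1M_0^{-1}v^*G_{0,0}-G_{0,0}vM_0^{-1}v^*G_{0,1}=\bigl(I-G_{0,0}vM_0^{-1}v^*\bigr)G_{0,1}\bigl(I-vM_0^{-1}v^*G_{0,0}\bigr)$, where the cross terms combine with $A_1=-M_0^{-1}M_1M_0^{-1}$ and $M_1=v^*G_{0,1}v$ to produce the factorized form. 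Dropping or "absorbing" them would destroy exactly the structure you need.

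For $G_1$ your overall route (read off the coefficient, exploit the rank-$N$ structure of $G_{0,1}$, identify an orthonormal non-resonance basis) is the paper's route, but the decisive step is missing: it is the factorization displayed above that lets you write $G_1=-\sum_\alpha|\Psi_\alpha\rangle\langle\Psi_\alpha|$ with $\Psi_\alpha=(I-G_{0,0}vM_0^{-1}v^*)\mathbf n^{(\alpha)}$, after which Lemma~\ref{12.11.24.18.24} gives $\Psi_\alpha-\mathbf n^{(\alpha)}\in\mathbb C\mathbf 1\oplus\mathcal L^{\beta-2}$ and Proposition~\ref{13.1.16.2.51} (via the decomposition $\Psi_\alpha=\mathbf n^{(\alpha)}-\sum_\gamma\langle\Phi^{(\gamma)},v^*\mathbf n^{(\alpha)}\rangle\Psi^{(\gamma)}+zM_0^{-1}v^*\mathbf n^{(\alpha)}$) gives $\Psi_\alpha\in\widetilde{\mathcal E}$; orthonormality is then immediate from $\mathsf E=\{0\}$ and the fact that the standard basis of $\mathbb C^N$ is orthonormal. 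No block-by-block repetition of the half-line computation of \cite[Section~IX]{IJ2} is needed, and no Gram--Schmidt matching beyond the one already encoded in \eqref{170807}. As written, your argument would not compile into a proof: fix the $j=0$ bookkeeping, delete the vanishing claim, and carry out the factorization of $G_1$ explicitly.
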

\begin{proof}
We discuss as continuing from the proof of Theorem~\ref{thm-reg}.
By \eqref{170820} and \eqref{12.12.1.7.18} we obviously have
\begin{align*}
G_0&=G_{0,0}-G_{0,0} vM_0^{-1}v^*G_{0,0}
,
\end{align*}
which verifies \eqref{160822427}.
The coefficient $G_1$ can be computed by \eqref{170820} and \eqref{12.12.1.7.18} as 
\begin{align*}
G_1
&=G_{0,1}
-G_{0,1}vM_0^{-1}v^*G_{0,0}
+G_{0,0}vM_0^{-1}M_1M_0^{-1}v^*G_{0,0}
-G_{0,0}vM_0^{-1}v^*G_{0,1}
\\&
=
\bigl(I-G_{0,0} vM_0^{-1}v^*\bigr)G_{0,1}\bigl(I-vM_0^{-1}v^*G_{0,0}\bigr)
.
\end{align*}
Then, recalling the expression of $G_{0,1}$ given by \eqref{170803} and \eqref{G01},
we can write 
\begin{align*}
G_1=-\sum_{\alpha=1}^N|\Psi_\alpha\rangle\langle\Psi_\alpha|;\quad
\Psi_\alpha=\bigl(I-G_{0,0} vM_0^{-1}v^*\bigr)\mathbf n^{(\alpha)}.
\end{align*}
By Lemma~\ref{12.11.24.18.24} we have 
$\Psi_\alpha-\mathbf n^{(\alpha)}\in\mathbb C\mathbf 1\oplus\mathcal L^{\beta-2}$,
so that it suffices to show that $\Psi_\alpha\in \widetilde{\mathcal E}$.
Let us decompose
\begin{align*}
\Psi_\alpha
=\mathbf n^{(\alpha)}
-\sum_{\gamma=1}^N
\bigl\langle \Phi^{(\gamma)},v^*\mathbf n^{(\alpha)}\bigr\rangle\Psi^{(\gamma)}
+zM_0^{-1}v^*\mathbf n^{(\alpha)}.
\end{align*}
Then by Proposition~\ref{13.1.16.2.51},
since $zM_0^{-1}v^*\mathbf n^{(\alpha)}\in \widetilde{\mathcal E}$, 
it suffices to show that 
\begin{align*}
\mathbf n^{(\alpha)}
-\sum_{\gamma=1}^N
\bigl\langle \Phi^{(\gamma)},v^*\mathbf n^{(\alpha)}\bigr\rangle\Psi^{(\gamma)}
\in
\mathbb C\mathbf n\cap\mathop{\mathrm{Ker}}V=\mathbb C\mathbf n\cap\mathop{\mathrm{Ker}}v^*,
\end{align*} 
but this is straightforward, cf.\ \eqref{17080719}. 
Hence the expression \eqref{160822428} is obtained.
\end{proof}

\subsection{Exceptional threshold of the first kind}

To state a theorem let us introduce some notation.
Let $\{\Psi_\gamma\}\subset \mathcal E$ be an orthogonal resonance basis.
Set $c^{(\gamma)}_\alpha=-\langle \mathbf n^{(\alpha)},V\Psi_\gamma\rangle$, 
and then $\{c^{(\gamma)}\}\subset\mathbb C^N$ 
is an orthonormal system by Proposition~\ref{13.1.16.2.51}.
We take an orthonormal system $\{c^{(\widetilde\gamma)}\}\subset\mathbb C^N$
such that $\{c^{(\gamma)}\}\cup\{c^{(\widetilde\gamma)}\}\subset\mathbb C^N$
is an orthonormal basis, and denote 
\begin{align}
\mathbf 1_\gamma=\sum_{\alpha=1}^Nc^{(\gamma)}_\alpha\mathbf 1^{(\alpha)},\quad 
\mathbf n_\gamma=\sum_{\alpha=1}^Nc^{(\gamma)}_\alpha\mathbf n^{(\alpha)},\quad
\mathbf n_{\widetilde\gamma}=\sum_{\alpha=1}^Nc^{(\widetilde\gamma)}_\alpha\mathbf n^{(\alpha)}.
\label{1708202319}
\end{align}

\begin{theorem}\label{17082015}
Under the assumptions of Theorem~\ref{thm-ex1} one has
\begin{align}
\begin{split}
G_0&
=G_{0,0}
-\mathcal PVG_{0,1}
-G_{0,1}V\mathcal P
\\&\phantom{{}={}}{}
-\bigl(G_{0,0}-\mathcal PVG_{0,1}\bigr)vM_0^\dagger v^*\bigl(G_{0,0}-G_{0,1}V\mathcal P\bigr)
+\mathcal PVG_{0,2}V\mathcal P
,
\end{split}
\label{16082513}
\\
\begin{split}
G_1
&=
-\widetilde{\mathcal P}
+\mathcal PVG_{0,3}V\mathcal P
+\mathcal PVG_{0,2}V\mathcal PVG_{0,2}V\mathcal P
\\&\phantom{{}={}}{}
-\bigl[I-\bigl(G_{0,0}-\mathcal PVG_{0,1}\bigr)vM_0^\dagger v^*\bigr]
\bigl(I+G_{0,1}V\mathcal PV\bigr)G_{0,2}V\mathcal P
\\&\phantom{{}={}}{}
-\mathcal PVG_{0,2}\bigl(I+V\mathcal PVG_{0,1}\bigr)
\bigl[I-vM_0^\dagger v^*\bigl(G_{0,0}-G_{0,1}V\mathcal P\bigr)\bigr]
,
\end{split}
\label{1708218}
\end{align}
where $\widetilde{\mathcal P}$ is an orthogonal non-resonance projection associated with 
the orthonormal non-resonance basis $\{\Psi_{\widetilde\gamma}\}_{\widetilde\gamma}$ consisting of 
\begin{align*}
\Psi_{\widetilde\gamma}
=\big[I-\bigl(G_{0,0}-\mathcal PV G_{0,1}\bigr) v M_0^\dagger v^*\bigr]\mathbf n_{\widetilde\gamma}
\end{align*}
with the above notation.
This in particular verifies the orthogonality \eqref{1708221320}.
\end{theorem}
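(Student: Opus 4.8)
The plan is to continue directly from the proof of Theorem~\ref{thm-ex1}, where we already have the expansion $R(\kappa)=\sum_{j\ge -1}\kappa^jG_j$ with $G_j=G_{0,j}-\sum_{j_1+j_2+j_3=j}G_{0,j_1}vC_{j_2}v^*G_{0,j_3}$ and $C_j=B_j+\sum_{j_1+j_2+j_3=j+1}B_{j_1}A_{j_2}B_{j_3}$ (with $B_{-1}=0$). First I would isolate the coefficients $C_0$ and $C_1$ in terms of the $A_j$ (given by \eqref{170821}) and the $B_j$ (given by \eqref{1708212}), using $B_0A_*=A_*B_0=A_*$ since $A_j\in\mathcal B(Q\mathcal K)$ and $B_0=M_0^\dagger+Q$ acts as the identity on $Q\mathcal K$. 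Then $G_0$ is assembled from $C_{-1}=A_0=m_0^\dagger$, $C_0=B_0+A_1+A_0B_1+B_1A_0+A_0B_0A_0\cdot(\text{correction})$, etc.; the key algebraic simplification is to recognize the combination $-G_{0,0}vm_0^\dagger v^*G_{0,0}$ wherever it appears and rewrite it via $m_0^\dagger=-\sum_\gamma|\Phi_\gamma\rangle\langle\Phi_\gamma|$ and $\Psi_\gamma=z\Phi_\gamma=-G_{0,0}v\Phi_\gamma$ as $\sum_\gamma|\Psi_\gamma\rangle\langle\Psi_\gamma|=\mathcal P$, and more generally to collapse factors of the form $G_{0,0}vQ\cdots$ into the resonance eigenfunctions. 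The other building block is the identity $v^*G_{0,0}v=M_0-U$, together with Lemma~\ref{12.11.24.18.24} and the identity $c^{(\gamma)}_\alpha=-\langle\mathbf n^{(\alpha)},V\Psi_\gamma\rangle$, which is what converts $v^*$-expressions contracted against $m_0$-related operators into the combinations $\mathcal PVG_{0,j}$.

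Next I would treat $G_1$ the same way but carrying one more order: collect $C_{-1},C_0,C_1$ and the free coefficients $G_{0,0},G_{0,1},G_{0,2},G_{0,3}$, substitute $A_2$ from \eqref{170821} and $B_2,B_3$ from \eqref{1708212}, and systematically use the absorption rules $B_0A_*=A_*$, $Q M_1 = M_1 Q$ restricted appropriately (here $QM_1Q=m_0$ is invertible on $Q\mathcal K$ so $Q M_1$ need not vanish — unlike the second-kind case), and the Green-type relations \eqref{18082023a}, i.e. $H_0G_{0,0}=I$, $H_0G_{0,1}=0$, $H_0G_{0,j}=-G_{0,j-2}$, to recognize total-derivative-type cancellations. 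The term $-\widetilde{\mathcal P}$ emerges exactly as in the regular case (Theorem~\ref{170820205}): the piece of $G_1$ coming from $G_{0,1}$ sandwiched between $I-(\cdots)vM_0^\dagger v^*$ factors, after projecting out the resonance directions via the orthonormal basis $\{c^{(\gamma)}\}\cup\{c^{(\widetilde\gamma)}\}$ of $\mathbb C^N$, gives $-\sum_{\widetilde\gamma}|\Psi_{\widetilde\gamma}\rangle\langle\Psi_{\widetilde\gamma}|$ with $\Psi_{\widetilde\gamma}$ as stated; one must check $\Psi_{\widetilde\gamma}\in\widetilde{\mathcal E}$ (via Proposition~\ref{13.1.16.2.51}, decomposing $\mathbf n_{\widetilde\gamma}$ against the Gram--Schmidt system $\{\Psi^{(\gamma)}\}$ and using $z M_0^\dagger v^* = $ the defining formula \eqref{12.12.27.16.4}) and that $\Psi_{\widetilde\gamma}-\mathbf n_{\widetilde\gamma}\in\mathbb C\mathbf 1\oplus\mathcal L^{\beta-2}$ (via Lemma~\ref{12.11.24.18.24}), which gives both that $\widetilde{\mathcal P}$ is a legitimate orthogonal non-resonance projection and the orthogonality \eqref{1708221320}, since $\{c^{(\gamma)}\}$ and $\{c^{(\widetilde\gamma)}\}$ are mutually orthogonal in $\mathbb C^N$.

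The remaining terms in $G_1$ — the ones involving $G_{0,2}$ and $G_{0,3}$ sandwiched with $\mathcal P$ and with $vM_0^\dagger v^*$ — require the extra contractions $v^*\mathcal P = v^* z(\cdots)$ and the identity $v^*G_{0,0}v=M_0-U$ applied repeatedly, plus the observation that $M_0 w\Psi_\gamma=v^*(I+G_{0,0}V)\Psi_\gamma\in\mathbb C\mathbf n\cap\mathop{\mathrm{Ker}}(H_0)$ so that various $M_0$-factors acting on these vectors telescope. I expect the main obstacle to be purely bookkeeping: the expansion of $C_1$ alone has several terms, the triple sum for $G_1$ then multiplies them against $G_{0,j_1},G_{0,j_3}$ for all compositions $j_1+j_2+j_3=1$ with $j_2\ge -1$, and organizing the resulting roughly twenty terms so that they regroup into the six compact terms in \eqref{1708218} — correctly identifying which partial sums are $\mathcal P$, which are $\widetilde{\mathcal P}$, and which assemble into $(I-(G_{0,0}-\mathcal PVG_{0,1})vM_0^\dagger v^*)$-type factors — is the delicate part. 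A useful intermediate device, as in \cite{IJ2}, is to first prove auxiliary identities such as $vM_0^\dagger v^* G_{0,0}v = vM_0^\dagger(M_0-U) = v - vM_0^\dagger U$ on the relevant subspace, and $\mathcal PVG_{0,2}V\mathcal P = \sum_{\gamma,\gamma'}|\Psi_\gamma\rangle\langle V\Psi_\gamma, G_{0,2}V\Psi_{\gamma'}\rangle\langle\Psi_{\gamma'}|$ with the inner product simplified by Lemma~\ref{lemma24}, before attempting the full collapse. Given the length, I would present the computation as a sequence of displayed equalities with brief justifications, exactly in the style of the proof of Theorem~\ref{170820205}, rather than spelling out every intermediate term.
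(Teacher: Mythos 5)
Your proposal follows essentially the same route as the paper's proof: continue from the expansion \eqref{170820133}--\eqref{160822553} obtained in the proof of Theorem~\ref{thm-ex1}, substitute the explicit formulas \eqref{170821}, \eqref{1708212} for $A_*$, $B_*$ and \eqref{1608231744}--\eqref{1608231745} for $m_*$, convert $m_0^\dagger$ into $-Uv^*\mathcal PvU$ so that $-G_{0,0}vm_0^\dagger v^*G_{0,0}=\mathcal P$, and then identify the $G_{0,1}$-sandwiched block as $-\widetilde{\mathcal P}$ via $G_{0,1}+G_{0,1}V\mathcal PVG_{0,1}=-\sum_{\widetilde\gamma}|\mathbf n_{\widetilde\gamma}\rangle\langle\mathbf n_{\widetilde\gamma}|$, checking $\Psi_{\widetilde\gamma}\in\widetilde{\mathcal E}$ (the paper does this by computing $H\Psi_{\widetilde\gamma}=vUQv^*\mathbf n_{\widetilde\gamma}=0$, which is equivalent to your route through Proposition~\ref{13.1.16.2.51} once one notes $Qv^*\mathbf n_{\widetilde\gamma}=0$) and $\Psi_{\widetilde\gamma}-\mathbf n_{\widetilde\gamma}\in\mathbb C\mathbf 1\oplus\mathcal L^{\beta-2}$ via Lemma~\ref{12.11.24.18.24}. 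The paper packages your ``contraction'' identities as Lemma~\ref{1708201554}, but the substance is the same, and the remaining work is the bookkeeping you correctly anticipate.
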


Before the proof of Theorem~\ref{17082015}
let us remark that the above expressions 
\eqref{16082513} and \eqref{1708218} can be computed 
further if we make use of the following lemma.

\begin{lemma}\label{1708201554}
Under the assumptions of Theorem~\ref{thm-ex1} one has
\begin{align}
G_{0,0}V\mathcal P
=-\mathcal P
,\quad
G_{0,1}V\mathcal P
=\sum_\gamma|\mathbf n_\gamma\rangle\langle\Psi_\gamma|
,
\label{17082023}
\end{align}
and for $j=2,3,\ldots,\beta-4$
\begin{align}
G_{0,j}V\mathcal P
&=
\sum_\gamma 
\Bigl(
\bigl|G_{0,j-2}(\Psi_\gamma-\mathbf 1_\gamma)\bigr\rangle\langle\Psi_\gamma|
-|G_{0,j}H_0\mathbf 1_\gamma\rangle\langle\Psi_\gamma|\Bigr),
\label{1708202320}
\end{align}
where $\{\Psi_\alpha\}\subset\mathcal E$ is an orthonormal resonance basis.
\end{lemma}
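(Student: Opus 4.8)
The plan is to prove Lemma~\ref{1708201554} by exploiting the identities satisfied by the free coefficients $G_{0,j}$ together with the precise asymptotics of resonance eigenfunctions from Proposition~\ref{13.1.16.2.51} and Lemma~\ref{12.11.24.18.24}. Throughout I fix an orthonormal resonance basis $\{\Psi_\gamma\}\subset\mathcal E$, so that by Proposition~\ref{13.1.16.2.51} we have $\Psi_\gamma-\mathbf 1_\gamma\in\mathcal L^{\beta-2}$ and $c_\alpha^{(\gamma)}=-\langle\mathbf n^{(\alpha)},V\Psi_\gamma\rangle$, where $\mathbf 1_\gamma=\sum_\alpha c_\alpha^{(\gamma)}\mathbf 1^{(\alpha)}$ and $\mathbf n_\gamma=\sum_\alpha c_\alpha^{(\gamma)}\mathbf n^{(\alpha)}$. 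The key algebraic fact is $H\Psi_\gamma=0$, i.e.\ $V\Psi_\gamma=-H_0\Psi_\gamma$, which converts the operators $G_{0,j}V\mathcal P$ into $-G_{0,j}H_0$ applied to resonance eigenfunctions, and there we can use Lemma~\ref{13.1.18.6.0} and the relations \eqref{18082023a}.

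First I would establish the two identities in \eqref{17082023}. For the first, since $\mathcal P=\sum_\gamma|\Psi_\gamma\rangle\langle\Psi_\gamma|$ it suffices to show $G_{0,0}V\Psi_\gamma=-\Psi_\gamma$ for each $\gamma$. Writing $V\Psi_\gamma=-H_0\Psi_\gamma$ and using $\Psi_\gamma\in\mathbb C\mathbf 1\oplus\mathcal L^{\beta-2}\subset\mathbb C\mathbf n\oplus\mathbb C\mathbf 1\oplus\mathcal L^1$ with $\beta\ge 3$, Lemma~\ref{13.1.18.6.0} gives $G_{0,0}H_0\Psi_\gamma=\Pi_0\Psi_\gamma=\Psi_\gamma$ (the projection $\Pi_0$ acts as the identity on $\mathbb C\mathbf 1\oplus\mathcal L^1$), so $G_{0,0}V\Psi_\gamma=-\Psi_\gamma$. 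For the second identity, I would compute $G_{0,1}V\Psi_\gamma=-G_{0,1}H_0\Psi_\gamma$. Decompose $\Psi_\gamma=\mathbf 1_\gamma+\rho_\gamma$ with $\rho_\gamma\in\mathcal L^{\beta-2}$; since $H_0\mathbf 1^{(\alpha)}$ is supported near the joining vertex and $G_{0,1}$ has kernel $-n\cdot m$ on each ray (and $0$ on $K$), a direct computation using \eqref{G01} shows $-G_{0,1}H_0\Psi_\gamma=\mathbf n_\gamma$; alternatively, one uses $H_0G_{0,1}=G_{0,1}H_0=0$ on the appropriate domains together with the structure of $\Psi_\gamma$. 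The cleanest route is to note $G_{0,1}H_0\Psi_\gamma=G_{0,1}H_0\mathbf 1_\gamma+G_{0,1}H_0\rho_\gamma$; the second term vanishes because $H_0G_{0,1}=0$ and self-adjointness gives $G_{0,1}H_0=0$ on $\mathcal L^1$, while the first is evaluated explicitly from \eqref{G01}, yielding $-\mathbf n_\gamma$.

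Next, for \eqref{1708202320} with $2\le j\le\beta-4$, I would again write $G_{0,j}V\Psi_\gamma=-G_{0,j}H_0\Psi_\gamma=-G_{0,j}H_0\mathbf 1_\gamma-G_{0,j}H_0\rho_\gamma$ with $\rho_\gamma=\Psi_\gamma-\mathbf 1_\gamma\in\mathcal L^{\beta-2}$. For the $\rho_\gamma$ term, since $\rho_\gamma\in\mathcal L^{\beta-2}$ and $H_0$ maps $\mathcal L^s$ into $\mathcal L^{s}$ (bounded, local), we have $H_0\rho_\gamma\in\mathcal L^{\beta-2}$; but more is true: $H_0\Psi_\gamma=-V\Psi_\gamma\in\mathcal L^\beta$ and $H_0\mathbf 1_\gamma\in\mathcal L^\infty$ is finitely supported, so $H_0\rho_\gamma\in\mathcal L^{\beta}$ actually, hence we may apply the relation $G_{0,j}H_0=-G_{0,j-2}$ from \eqref{18082023a} (valid for $j\ge 2$) to get $-G_{0,j}H_0\rho_\gamma=G_{0,j-2}\rho_\gamma=G_{0,j-2}(\Psi_\gamma-\mathbf 1_\gamma)$. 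This produces the first term on the right-hand side of \eqref{1708202320}, and $-G_{0,j}H_0\mathbf 1_\gamma$ is literally the second term. Summing over $\gamma$ via $\mathcal P=\sum_\gamma|\Psi_\gamma\rangle\langle\Psi_\gamma|$ gives the claimed formula.

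The main obstacle is the careful bookkeeping of function spaces: one must verify that each intermediate object (e.g.\ $H_0\rho_\gamma$, $G_{0,j-2}\rho_\gamma$, $G_{0,j}H_0\mathbf 1_\gamma$) lies in the space where the relevant identity from \eqref{18082023a} or Lemma~\ref{13.1.18.6.0} is actually valid, since these hold only on restricted domains ($\mathcal L^1$ for some, $\mathbb C\mathbf n\oplus\mathbb C\mathbf 1\oplus\mathcal L^1$ for others), and the weight index $\beta-2$ must be large enough — this is where the hypothesis $\beta\ge 3$ (and $j\le\beta-4$) enters. A secondary subtlety is confirming $G_{0,1}H_0\mathbf 1_\gamma=-\mathbf n_\gamma$ explicitly from the kernel \eqref{G01}; this is a short finite computation on each ray but must be done with the correct boundary term at the vertex $x_\alpha$. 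Once the domain issues are settled the identities follow mechanically from $V\Psi_\gamma=-H_0\Psi_\gamma$ and the recursion \eqref{18082023a}.
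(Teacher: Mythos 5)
Your proof is correct and follows essentially the same route as the paper: write $V\Psi_\gamma=-H_0\Psi_\gamma$, subtract the leading asymptotics $\mathbf 1_\gamma$ before invoking $G_{0,j}H_0=-G_{0,j-2}$ (the paper encodes exactly this in its summation-by-parts identity \eqref{1708202328}), and handle the boundary contribution $G_{0,j}H_0\mathbf 1_\gamma$ separately. The only cosmetic difference is that the paper obtains \eqref{17082023} directly from Proposition~\ref{13.1.16.2.51} and the definition $c^{(\gamma)}_\alpha=-\langle\mathbf n^{(\alpha)},V\Psi_\gamma\rangle$ together with the explicit kernel \eqref{G01}, whereas you rederive these via $\Pi_0$ and a kernel computation — both are fine.
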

\begin{proof}
The formulas in \eqref{17082023} follow by Proposition~\ref{13.1.16.2.51} and \eqref{1708202319}.
For the  proof of \eqref{1708202320}
it suffices to note that for any $u\in(\mathcal L^{\beta-2})^*$
\begin{align}
\langle u,H_0\Psi_\gamma\rangle
=
\bigl\langle H_0 u,(\Psi_\gamma-\mathbf 1_\gamma)\bigr\rangle
+\langle u,H_0 \mathbf 1_\gamma\rangle.
\label{1708202328}
\end{align}
We remark that, since $\Psi_\gamma$ and $u$ are not decaying at infinity,
we may apply a summation by parts only 
after we subtract the leading asymptotics of $\Psi_\gamma$ as above.
Let us omit the proof of \eqref{1708202328}.
Then \eqref{1708202320} follows immediately by 
\eqref{1708202328} and \eqref{18082023a}.
\end{proof}

\begin{remarks}\label{170826231}
\ 

1.
The formulas in \eqref{17082023}
may be considered as special cases 
of \eqref{1708202320}.

2.
Using Lemma~\ref{1708201554},
we can further compute, for example, as
\begin{align*}
G_0&
=G_{0,0}
-\sum_\gamma
\Bigl(
|\Psi_\gamma\rangle\langle\mathbf n_\gamma|+|\mathbf n_\gamma\rangle\langle\Psi_\gamma|
\Bigr)
\\&\phantom{{}={}}{}
-\Bigl(G_{0,0}-\sum_\gamma
|\Psi_\gamma\rangle\langle\mathbf n_\gamma|\Bigr)vM_0^\dagger v^*\Bigl(G_{0,0}-\sum_\gamma
|\mathbf n_\gamma\rangle\langle\Psi_\gamma|\Bigr)
\\&\phantom{{}={}}{}
+\sum_{\gamma,\gamma'} c_{\gamma\gamma'}|\Psi_\gamma\rangle\langle\Psi_{\gamma'}|
,
\end{align*}
where $\{\Psi_\alpha\}\subset\mathcal E$ is an orthonormal resonance basis and 
\begin{align*}
c_{\gamma\gamma'}
&=
\tfrac12\delta_{\gamma\gamma'}
-\bigl\langle(\Psi_\gamma-\mathbf 1_\gamma),(\Psi_{\gamma'}-\mathbf 1_{\gamma'})\bigr\rangle
-\bigl\langle v^*\mathbf n_\gamma,M_0^\dagger v^*\mathbf n_{\gamma'}\bigr\rangle
\\&\phantom{{}={}}{}
-\bigl\langle\mathbf 1_\gamma,(\Psi_{\gamma'}-\mathbf 1_{\gamma'})\bigr\rangle
-\bigl\langle(\Psi_\gamma-\mathbf 1_\gamma),\mathbf 1_{\gamma'}\bigr\rangle
.
\end{align*}
The corresponding expression for $G_1$ would be much more complicated.
We have not found any particular advantage of writing down $G_1$ in this form,
\end{remarks}

\begin{proof}[Proof of Theorem~\ref{17082015}]
The discussion here is a continuation from the proof of Theorem~\ref{thm-ex1}.
Using \eqref{170820133} and \eqref{160822553}
we first express $G_0$ in terms of $A_*$ and $B_*$,
and then insert expressions for them:
\begin{align*}
G_0
&
=
G_{0,0}
-G_{0,0}vC_{-1}v^*G_{0,1}
-G_{0,1}vC_{-1}v^*G_{0,0}
-G_{0,0}vC_0v^*G_{0,0}
\\&
=
G_{0,0}
-G_{0,0}vA_0v^*G_{0,1}
-G_{0,1}vA_0v^*G_{0,0}
\\&\phantom{{}={}}{}
-G_{0,0}v\bigl(
B_0
+B_0A_0B_1
+B_1A_0B_0
+B_0A_1B_0
\bigr)v^*G_{0,0}
\\&
= 
G_{0,0}
-G_{0,0}vm_0^\dagger v^*G_{0,1}
-G_{0,1}vm_0^\dagger v^*G_{0,0}
\\&\phantom{{}={}}{}
-G_{0,0}v\Bigl(
M_0^\dagger +Q
-m_0^\dagger M_1(M_0^\dagger +Q)
-(M_0^\dagger +Q) M_1m_0^\dagger
-m_0^\dagger m_1m_0^\dagger
\Bigr)v^*G_{0,0}.
\end{align*}
We expand the parentheses above, and unfold $m_1$ by \eqref{1608231744},
noting $m_0m_0^\dagger=m_0^\dagger m_0=Q$: 
\begin{align*}
G_0&
= 
G_{0,0}
-G_{0,0}vM_0^\dagger v^*G_{0,0}
+G_{0,0}vm_0^\dagger M_2m_0^\dagger v^*G_{0,0}
-G_{0,0}vm_0^\dagger M_1M_0^\dagger M_1m_0^\dagger v^*G_{0,0}
\\&\phantom{{}={}}{}
-G_{0,0}vm_0^\dagger v^*G_{0,1}
+G_{0,0}vm_0^\dagger M_1M_0^\dagger v^*G_{0,0}
\\&\phantom{{}={}}{}
-G_{0,1}vm_0^\dagger v^*G_{0,0}
+G_{0,0}vM_0^\dagger M_1m_0^\dagger v^*G_{0,0}
.
\end{align*}
Substitute the expressions $m_0^\dagger=-Uv^*\mathcal PvU$, which follows from \eqref{16082516}, 
and $M_j=v^*G_{0,j}v$ for $j=1,2$:
\begin{align*}
G_0
&
= 
G_{0,0}
-G_{0,0}vM_0^\dagger v^*G_{0,0}
+G_{0,0}V\mathcal PVG_{0,2}V\mathcal PVG_{0,0}
\\&\phantom{{}={}}{}
-G_{0,0}V\mathcal PVG_{0,1}vM_0^\dagger v^*G_{0,1}V\mathcal PVG_{0,0}
\\&\phantom{{}={}}{}
+G_{0,0}V\mathcal PVG_{0,1}
-G_{0,0}V\mathcal PVG_{0,1}vM_0^\dagger v^*G_{0,0}
\\&\phantom{{}={}}{}
+G_{0,1}V\mathcal PVG_{0,0}
-G_{0,0}vM_0^\dagger v^* G_{0,1}V\mathcal PVG_{0,0}
.
\end{align*}
Now we apply Lemma~\ref{1708201554}, and then we obtain \eqref{16082513}.

Next we compute $G_1$. 
By \eqref{170820133} and \eqref{160822553}
we first write it in terms only of $A_*$ and $B_*$,
noting also $B_0A_*=A_*B_0=A_*$,
\begin{align*}
G_1
&=G_{0,1}
-G_{0,1}vC_{-1}v^*G_{0,1}
-G_{0,0}vC_{-1}v^*G_{0,2}
-G_{0,2}vC_{-1}v^*G_{0,0}
\\&\phantom{{}={}}{}
-G_{0,0}vC_{0}v^*G_{0,1}
-G_{0,1}vC_{0}v^*G_{0,0}
-G_{0,0}vC_1v^*G_{0,0}
\\
&=G_{0,1}
-G_{0,1}vA_0v^*G_{0,1}
-G_{0,0}vA_0v^*G_{0,2}
-G_{0,2}vA_0v^*G_{0,0}
\\&\phantom{{}={}}{}
-G_{0,0}v(B_0+A_0B_1+B_1A_0+A_1)v^*G_{0,1}
\\&\phantom{{}={}}{}
-G_{0,1}v(B_0+A_0B_1+B_1A_0+A_1)v^*G_{0,0}
\\&\phantom{{}={}}{}
-G_{0,0}v(B_1+B_1A_0B_1+A_0B_2+B_2A_0+A_1B_1+B_1A_1+A_2)v^*G_{0,0}.
\end{align*}
Insert the expressions \eqref{170821} and \eqref{1708212} for $A_*$ and $B_*$,
and then use $QM_1Q=m_0$ and $m_0m_0^\dagger=m_0^\dagger m_0=Q$:
\begin{align*}
G_1
&=G_{0,1}
-G_{0,1}vm_0^\dagger v^*G_{0,1}
-G_{0,0}vm_0^\dagger v^*G_{0,2}
-G_{0,2}vm_0^\dagger v^*G_{0,0}
\\&\phantom{{}={}}{}
-G_{0,0}v
\Bigl(
M_0^\dagger +Q
-m_0^\dagger M_1(M_0^\dagger +Q)
-(M_0^\dagger +Q) M_1 m_0^\dagger
-m_0^\dagger m_1m_0^\dagger
\Bigr)v^*G_{0,1}
\\&\phantom{{}={}}{}
-G_{0,1}v
\Bigl(
M_0^\dagger +Q
-m_0^\dagger M_1(M_0^\dagger +Q)
-(M_0^\dagger +Q) M_1 m_0^\dagger
-m_0^\dagger m_1m_0^\dagger
\Bigr)v^*G_{0,0}
\\&\phantom{{}={}}{}
-G_{0,0}v
\Bigl(
-(M_0^\dagger +Q) M_1(M_0^\dagger +Q)
+(M_0^\dagger +Q) M_1 m_0^\dagger M_1(M_0^\dagger +Q)
\\&\phantom{{}={}-G_{0,0}v\Bigl(}{}
+m_0^\dagger \bigl[-M_2(M_0^\dagger +Q)+M_1(M_0^\dagger +Q) M_1(M_0^\dagger +Q)\bigr]
\\&\phantom{{}={}-G_{0,0}v\Bigl(}{}
+\bigl[-(M_0^\dagger +Q) M_2+(M_0^\dagger +Q) M_1(M_0^\dagger +Q) M_1\bigr]m_0^\dagger
\\&\phantom{{}={}-G_{0,0}v\Bigl(}{}
+m_0^\dagger m_1m_0^\dagger  M_1(M_0^\dagger +Q)
+(M_0^\dagger +Q) M_1m_0^\dagger m_1m_0^\dagger
\\&\phantom{{}={}-G_{0,0}v\Bigl(}{}
-m_0^\dagger m_2m_0^\dagger+m_0^\dagger m_1m_0^\dagger m_1m_0^\dagger 
\Bigr)v^*G_{0,0}
\\&
=
G_{0,1}
-G_{0,1}vm_0^\dagger v^*G_{0,1}
-G_{0,0}vm_0^\dagger v^*G_{0,2}
-G_{0,2}vm_0^\dagger v^*G_{0,0}
\\&\phantom{{}={}}{}
-G_{0,0}v
\Bigl(
M_0^\dagger -Q
-m_0^\dagger M_1M_0^\dagger
-M_0^\dagger M_1 m_0^\dagger
-m_0^\dagger m_1m_0^\dagger
\Bigr)v^*G_{0,1}
\\&\phantom{{}={}}{}
-G_{0,1}v
\Bigl(
M_0^\dagger -Q
-m_0^\dagger M_1M_0^\dagger 
-M_0^\dagger M_1 m_0^\dagger
-m_0^\dagger m_1m_0^\dagger
\Bigr)v^*G_{0,0}
\\&\phantom{{}={}}{}
-G_{0,0}v
\Bigl(
2m_0
-M_0^\dagger  M_1M_0^\dagger 
+M_0^\dagger  M_1 Q
+Q M_1M_0^\dagger 
+M_0^\dagger  M_1 m_0^\dagger M_1M_0^\dagger
\\&\phantom{{}={}-G_{0,0}v\Bigl(}{}
-m_0^\dagger M_2M_0^\dagger
-m_0^\dagger M_2 Q
+m_0^\dagger M_1M_0^\dagger  M_1M_0^\dagger 
+m_0^\dagger M_1M_0^\dagger  M_1Q
\\&\phantom{{}={}-G_{0,0}v\Bigl(}{}
-M_0^\dagger M_2m_0^\dagger
-Q M_2m_0^\dagger
+M_0^\dagger  M_1M_0^\dagger M_1m_0^\dagger
+Q M_1M_0^\dagger  M_1m_0^\dagger
\\&\phantom{{}={}-G_{0,0}v\Bigl(}{}
+m_0^\dagger m_1m_0^\dagger  M_1M_0^\dagger
+m_0^\dagger m_1m_0^\dagger  M_1 Q
\\&\phantom{{}={}-G_{0,0}v\Bigl(}{}
+M_0^\dagger  M_1m_0^\dagger m_1m_0^\dagger
+Q M_1m_0^\dagger m_1m_0^\dagger
\\&\phantom{{}={}-G_{0,0}v\Bigl(}{}
-m_0^\dagger m_2m_0^\dagger+m_0^\dagger m_1m_0^\dagger m_1m_0^\dagger 
\Bigr)v^*G_{0,0}.
\end{align*}
Next we insert the expressions \eqref{1608231744} and 
\eqref{1608231745} for $m_1$ and $m_2$, and then 
use $M_j=v^*G_{0,j}v$ for $j=1,2,3$.
After some computations we obtain
\begin{align*}
G_1
&=
\big(I-G_{0,0}vM_0^\dagger v^*+G_{0,0}vm_0^\dagger v^*G_{0,1}vM_0^\dagger v^*\big) 
\\&\phantom{{}={}}{}
\cdot \bigl(G_{0,1}-G_{0,1}vm_0^\dagger v^*G_{0,1}\bigr)
\bigl(I-vM_0^\dagger v^*G_{0,0}+vM_0^\dagger v^*G_{0,1}v m_0^\dagger v^*G_{0,0}\bigr)
\\&\phantom{{}={}}{}
+G_{0,0}vm_0^\dagger v^*G_{0,3}vm_0^\dagger v^*G_{0,0}
-G_{0,0}vm_0^\dagger v^*G_{0,2}vm_0^\dagger v^*G_{0,2}vm_0^\dagger v^*G_{0,0}
\\&\phantom{{}={}}{}
-\bigl(I-G_{0,0}vM_0^\dagger v^*+G_{0,0}vm_0^\dagger v^*G_{0,1}vM_0^\dagger v^*\bigr)
\bigl(I-G_{0,1}vm_0^\dagger v^*\bigr)G_{0,2}vm_0^\dagger v^*G_{0,0}
\\&\phantom{{}={}}{}
-G_{0,0}vm_0^\dagger v^*G_{0,2}\bigl(I-vm_0^\dagger v^*G_{0,1}\bigr)
\bigl(I-vM_0^\dagger v^*G_{0,0}+vM_0^\dagger v^*G_{0,1}vm_0^\dagger v^*G_{0,0}\bigr)
.
\end{align*}
Now we use $m_0^\dagger=-Uv^*\mathcal PvU$ with Lemma~\ref{1708201554}.
It follows that 
\begin{align*}
G_1
&=
\big(I-G_{0,0}v M_0^\dagger v^*+\mathcal PV G_{0,1} v M_0^\dagger v^*\bigr)
\\&\phantom{{}={}}{}
\cdot
\bigl(G_{0,1}+G_{0,1}V\mathcal PVG_{0,1}\bigr)
\bigl(I-vM_0^\dagger v^*G_{0,0}+vM_0^\dagger v^*G_{0,1}V\mathcal P\bigr)
\\&\phantom{{}={}}{}
+\mathcal PVG_{0,3}V\mathcal P
+\mathcal PVG_{0,2}V\mathcal PVG_{0,2}V\mathcal P
\\&\phantom{{}={}}{}
-\bigl(I-G_{0,0}vM_0^\dagger v^*+\mathcal PVG_{0,1}vM_0^\dagger v^*\bigr)
\bigl(I+G_{0,1}V\mathcal PV\bigr)G_{0,2}V\mathcal P
\\&\phantom{{}={}}{}
-\mathcal PVG_{0,2}\bigl(I+V\mathcal PVG_{0,1}\bigr)
\bigl(I-vM_0^\dagger v^*G_{0,0}+vM_0^\dagger v^*G_{0,1}V\mathcal P\bigr)
.
\end{align*}

Finally let us set 
\begin{align*}
T&=\big(I-G_{0,0}v M_0^\dagger v^*+\mathcal PV G_{0,1} v M_0^\dagger v^*\bigr)
\\&\phantom{{}={}}{}
\cdot
\bigl(G_{0,1}+G_{0,1}V\mathcal PVG_{0,1}\bigr)
\bigl(I-vM_0^\dagger v^*G_{0,0}+vM_0^\dagger v^*G_{0,1}V\mathcal P\bigr),
\end{align*}
for short.
Let $\{\Psi_\gamma\}\subset\mathcal E$ be an orthonormal resonance basis,
and define $\mathbf n_\gamma$ and $\mathbf n_{\widetilde\gamma}$ as in \eqref{1708202319}.
By Lemma~\ref{1708201554} we have 
\begin{align*}
G_{0,1}+G_{0,1}V\mathcal PVG_{0,1}
=-\sum_{\alpha=1}^N|\mathbf n^{(\alpha)}\rangle \langle \mathbf n^{(\alpha)}|
+\sum_{\gamma}|\mathbf n_\gamma\rangle \langle \mathbf n_\gamma|
=-\sum_{\widetilde\gamma}|\mathbf n_{\widetilde\gamma}\rangle \langle \mathbf n_{\widetilde\gamma}|,
\end{align*}
so that we can write 
\begin{align*}
T
=-\sum_{\widetilde\gamma}|\Psi_{\widetilde\gamma}\rangle \langle \Psi_{\widetilde\gamma}|
;\quad
\Psi_{\widetilde\gamma}
=\big(I-G_{0,0}v M_0^\dagger v^*+\mathcal PV G_{0,1} v M_0^\dagger v^*\bigr)\mathbf n_{\widetilde\gamma}.
\end{align*}
Hence it suffices to show that $\{\Psi_{\widetilde\gamma}\}$ is an orthonormal non-resonance basis.
Since 
\begin{align*}
Qv^*\bigl(G_{0,1}+G_{0,1}V\mathcal PVG_{0,1}\bigr)
=
Qv^*\bigl(I-G_{0,1}vm_0^\dagger v^*\bigr)G_{0,1}
=0,
\end{align*}
we have $Qv^*\mathbf n_{\widetilde\gamma}=0$,
and hence  
\begin{align*}
H\Psi_{\widetilde\gamma}
=\big(V-v M_0^\dagger v^*-VG_{0,0}v M_0^\dagger v^*\bigr)
\mathbf n_{\widetilde\gamma}
=vUQv^*\mathbf n_{\widetilde\gamma}
=0.
\end{align*}
This implies that $\Psi_{\widetilde\gamma}\in \widetilde{\mathcal E}$.
On the other hand, by Lemma~\ref{12.11.24.18.24} we have 
\begin{align*}
\Psi_{\widetilde\gamma}
-
\mathbf n_{\widetilde\gamma}
\in \mathbb C\mathbf 1\oplus \mathcal L^{\beta-2}.
\end{align*}
Since $\mathsf E=\{0\}$ under the assumption,
it follows that $\{\Psi_{\widetilde\gamma}\}\subset \widetilde{\mathcal E}$ 
is an orthonormal non-resonance basis.
We are done.
\end{proof}

\subsection{Exceptional threshold of the second kind}
\begin{theorem}\label{170820200}
Under the assumptions of Theorem~\ref{thm-ex2} one has 
\begin{align}
G_{0}&=
(I-\mathsf P)
\bigl(G_{0,0}
-G_{0,0}
vM_0^\dagger v^*G_{0,0}
\bigr)(1-\mathsf P)
,\label{ex2-G0}
\\
\begin{split}
G_1&=-\widetilde{\mathcal P},
\end{split}
\label{ex2-G1}
\end{align}
where $\widetilde{\mathcal P}$ is uniquely determined due to $\mathcal{E}/E=\{0\}$.
\end{theorem}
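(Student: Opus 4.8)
The plan is to continue from the end of the proof of Theorem~\ref{thm-ex2} and to read $G_0$ and $G_1$ off the general expansion \eqref{170820157}, for which one only needs the low-order coefficients $C_{-2},C_{-1},C_0$ of $M(\kappa)^{-1}$ (and also $C_1$ for $G_1$). I would first extract these from \eqref{160822553b}, \eqref{1608225} and \eqref{ex1-m-expandb}, exploiting the vanishings special to the second-kind case: $m_0=0$, $m_2=0$, $QM_1=M_1Q=0$ (by \eqref{17081122}--\eqref{16082220}), $A_0=0$ (by \eqref{1608252242}), and $M_0^\dagger Q=QM_0^\dagger=0$. A short computation then gives $C_{-2}=m_1^\dagger$ and $C_{-1}=0$, consistent with the already proved $G_{-2}=\mathsf P$ and $G_{-1}=0$, together with explicit expressions for $C_0$ and $C_1$ in which every summand is either a pure $M_0^\dagger$-term, or carries a factor $m_1^\dagger$ on both sides, or contains a factor $Qv^*\mathbf n^{(\alpha)}$ and therefore drops out. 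Substituting into \eqref{170820157} produces raw formulas for $G_0$ and $G_1$ in terms of $M_0^\dagger$, $m_1^\dagger$ and the free coefficients $G_{0,j}$.

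The second ingredient is the algebraic dictionary that converts $m_1^\dagger$ into an $\mathcal H$-level object. First, $m_1^\dagger=-Uv^*\mathsf Pv U$, equivalently $vm_1^\dagger v^*=-V\mathsf PV$: this is the second-kind counterpart of the identity $q_0^\dagger=-Uv^*\mathsf Pv U$ used in the proof of Theorem~\ref{thm-ex3}, and follows from $G_{-2}=\mathsf P$, from $z=-G_{0,0}v$ on $Q\mathcal K$ (by \eqref{1708043}), and from $v^*G_{0,0}v=M_0-U$ together with $M_0Q=0$. Second, the second-kind analogue of Lemma~\ref{1708201554}: since $\mathsf E=\mathcal E\subset\mathcal L^{\beta-2}$, every $\Psi\in\mathsf E$ has vanishing leading coefficients, so Lemma~\ref{13.1.18.6.0} and \eqref{18082023a} give $G_{0,0}V\Psi=-\Psi$, $G_{0,1}V\Psi=0$ and $G_{0,j}V\Psi=G_{0,j-2}\Psi$ for $j\geq2$; in particular $\mathsf PVG_{0,j}=\mathsf PG_{0,j-2}$. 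Third, the $G_{0,2}$- and $G_{0,3}$-sandwich terms coming from $m_1$ and $m_3$ are rewritten using Lemma~\ref{lemma24}, whose hypothesis $\langle\mathbf n^{(\alpha)},V\Psi\rangle=0$ expresses precisely $\Psi\in\mathsf E$. Collecting terms in the raw formula for $G_0$, the singular contributions cancel and the remainder regroups as $(I-\mathsf P)\bigl(G_{0,0}-G_{0,0}vM_0^\dagger v^*G_{0,0}\bigr)(I-\mathsf P)$, which is \eqref{ex2-G0}; this is the regular-case expression \eqref{160822427} conjugated by $I-\mathsf P$, as one expects since $\mathsf P=G_{-2}$ absorbs the whole singularity. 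A quick consistency check is that $HG_0=I-\mathsf P$ from Corollary~\ref{17081919} holds for this expression.

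For \eqref{ex2-G1} the computation is longer but has exactly the shape of the regular case, Theorem~\ref{170820205}. After the same substitutions and cancellations one arrives at $G_1=-\sum_{\alpha=1}^N|\Psi_\alpha\rangle\langle\Psi_\alpha|$ with $\Psi_\alpha=(I-\mathsf P)\bigl(I-G_{0,0}vM_0^\dagger v^*\bigr)\mathbf n^{(\alpha)}$, and it remains, as in the proof of Theorem~\ref{170820205}, to check that $\{\Psi_\alpha\}$ is an orthonormal non-resonance basis. One has $\Psi_\alpha\in\widetilde{\mathcal E}$: since $H\mathsf P=0$, $H_0\mathbf n^{(\alpha)}=0$, $Qv^*\mathbf n^{(\alpha)}=0$ (valid in the second kind) and $v^*G_{0,0}v=M_0-U$, a direct computation gives $H\Psi_\alpha=0$. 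The asymptotics $\Psi_\alpha-\mathbf n^{(\alpha)}\in\mathbb C\mathbf 1\oplus\mathcal L^{\beta-2}$ follow from Lemma~\ref{12.11.24.18.24} together with $\mathop{\mathrm{Ran}}\mathsf P\subset\mathcal L^{\beta-2}$; orthogonality to $\mathsf E$ is built into the factor $I-\mathsf P$; and the leading coefficients of the $\Psi_\alpha$ are the standard basis of $\mathbb C^N$, hence orthonormal. Therefore $\widetilde{\mathcal P}=\sum_\alpha|\Psi_\alpha\rangle\langle\Psi_\alpha|$ is an orthogonal non-resonance projection, and it is the only one because $\mathcal E=\mathsf E$ removes the freedom in the coefficients of $\mathbf 1^{(\alpha)}$: the leading-$\mathbf n$-coefficient map $\widetilde{\mathcal E}/\mathcal E\to\mathbb C^N$ is a bijection and orthogonality to $\mathsf E$ selects a unique representative in each class, so $\widetilde{\mathcal P}$ is independent of the chosen orthonormal basis of $\mathbb C^N$.

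The main obstacle is bookkeeping rather than analysis: as in the proof of Theorem~\ref{17082015}, the order-$0$ and especially the order-$1$ expansions of $M(\kappa)^{-1}$ involve a large number of terms assembled from $M_0^\dagger$, $Q$, $m_1^\dagger$ and the $m_j$, and the cancellations emerge only after applying $QM_1=0$, $m_0=m_2=0$, $M_0^\dagger Q=0$ and $m_1^\dagger=-Uv^*\mathsf Pv U$ in the right order; the $G_1$ computation runs several lines before collapsing to $-\widetilde{\mathcal P}$. The only genuinely delicate point, already present in \cite{IJ2}, is the justification of the summation-by-parts identities underlying the second-kind analogue of Lemma~\ref{1708201554} and of Lemma~\ref{lemma24}, which may be invoked only after subtracting the (here vanishing) leading asymptotics.
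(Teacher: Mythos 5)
Your proposal follows essentially the same route as the paper's proof: continuing from the proof of Theorem~\ref{thm-ex2}, extracting the low-order coefficients of $M(\kappa)^{-1}$ via the second-kind vanishing relations $m_0=m_2=0$, $QM_1=M_1Q=0$, $A_0=0$, substituting $m_1^{\dagger}=-Uv^*\mathsf PvU$, converting to $\mathsf P$-language through $V\mathsf P=-H_0\mathsf P$ and \eqref{18082023a}, and identifying $G_1=-\sum_\alpha|\Psi_\alpha\rangle\langle\Psi_\alpha|$ with exactly the basis $\Psi_\alpha=(I-\mathsf P)(I-G_{0,0}vM_0^{\dagger}v^*)\mathbf n^{(\alpha)}$ used in the paper. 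Your uniqueness argument for $\widetilde{\mathcal P}$ and the verification that $\{\Psi_\alpha\}$ is an orthonormal non-resonance basis also match the paper's treatment, so the proposal is correct.
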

\begin{remark*}
There is an error in \cite[Theorem~I\hspace{-.1em}I\hspace{-.1em}I.6]{IJ2},
which is corrected in the above statement.
\end{remark*}
\begin{proof}
The discussion here continues from the proof of Theorem~\ref{thm-ex2}.
By \eqref{170820157} and \eqref{160822553b} 
we can write, implementing $B_0A_*=A_*B_0=A_*$,
\begin{align*}
G_0
&=
G_{0,0}
-G_{0,0}vC_0v^*G_{0,0}
-G_{0,0}vC_{-1}v^*G_{0,1}
-G_{0,1}vC_{-1}v^*G_{0,0}
\\&\phantom{{}={}}{}
-G_{0,0}vC_{-2}v^*G_{0,2}
-G_{0,1}vC_{-2}v^*G_{0,1}
-G_{0,2}vC_{-2}v^*G_{0,0}
\\&
=
G_{0,0}
\\&\phantom{{}={}}{}
-G_{0,0}v\Bigl(B_0
+A_1
+A_0B_1
+B_1A_0
+B_1A_{-1}B_1
+A_{-1}B_2
+B_2A_{-1}
\Bigr)v^*G_{0,0}
\\&\phantom{{}={}}{}
-G_{0,0}v\bigl(
A_0
+A_{-1}B_1
+B_1A_{-1}
\bigr)v^*
G_1
-G_{0,1}v\bigl(
A_0
+A_{-1}B_1
+B_1A_{-1}
\bigr)v^*G_{0,0}
\\&\phantom{{}={}}{}
-G_{0,0}vA_{-1}v^*G_{0,2}
-G_{0,1}vA_{-1}v^*G_{0,1}
-G_{0,2}vA_{-1}v^*G_{0,0}.
\end{align*}
Let us now use some vanishing relations coming from  
\eqref{am0b}, \eqref{16082220},
and \eqref{1608252242}:
\begin{align*}
G_0
&=
G_{0,0}
-G_{0,0}v\Bigl(B_0
+A_1
+B_1A_{-1}B_1
+A_{-1}B_2
+B_2A_{-1}
\Bigr)v^*G_{0,0}
\\&\phantom{{}={}}{}
-G_{0,0}vA_{-1}B_1v^*G_{0,1}
-G_{0,1}vB_1A_{-1}v^*G_{0,0}
\\&\phantom{{}={}}{}
-G_{0,0}vA_{-1}v^*G_{0,2}
-G_{0,2}vA_{-1}v^*G_{0,0},
\end{align*}
and then 
insert expressions for $A_*$ and $B_*$,
using the explicit kernels of operators
and implementing \eqref{am0b} and \eqref{16082220}.
We omit some computations, to obtain
\begin{align*}
G_0
&=
G_{0,0}
-G_{0,0}v\Bigl(M_0^\dagger +Q
-m_1^\dagger m_3m_1^\dagger
\\&\phantom{{}=G_{0,0}-G_{0,0}v\Bigl({}}{}
-m_1^\dagger  M_2(M_0^\dagger +Q)
-(M_0^\dagger +Q) M_2 m_1^\dagger 
\Bigr)v^*G_{0,0}
\\&\phantom{{}={}}{}
-G_{0,0}vm_1^\dagger v^*G_{0,2}
-G_{0,2}vm_1^\dagger v^*G_{0,0}
.
\end{align*}
Next we unfold $m_3$.
We use the expressions 
$m_3=QM_4Q-QM_2M_0^\dagger M_2Q-m_1m_1$
and $QM_2Q=m_1$
which holds under \eqref{16082220}, and then
\begin{align*}
G_0
&
=
G_{0,0}
-G_{0,0}v(I-m_1^\dagger  M_2)
M_0^\dagger 
(I- M_2m_1^\dagger)
v^*G_{0,0}
\\&\phantom{{}={}}{}
-G_{0,0}v\bigl(
Q
+m_1^\dagger m_1m_1m_1^\dagger
-m_1^\dagger  m_1
-m_1 m_1^\dagger 
\bigr)v^*G_{0,0}
\\&\phantom{{}={}}{}
-G_{0,0}vm_1^\dagger v^*G_{0,2}
-G_{0,2}vm_1^\dagger v^*G_{0,0}
+G_{0,0}vm_1^\dagger M_4m_1^\dagger v^*G_{0,0}
.
\end{align*}
Now we note that by \eqref{160826} we have 
\begin{align}
m_1^{\dagger}
=-Uv^*\mathsf PvU\label{16082619}
\end{align}
and this operator is bijective as $Q\mathcal K\to Q\mathcal K$.
Hence we have 
\begin{align*}
G_0
&
=
G_{0,0}
-(G_{0,0}+\mathsf PVG_{0,2})
vM_0^\dagger v^*
(G_{0,0}+ G_{0,2}V\mathsf P)
\\&\phantom{{}={}}{}
+\mathsf PVG_{0,2}
+G_{0,2}V\mathsf P
+\mathsf PV G_{0,4}V\mathsf P
\end{align*}
Furthermore, we make use of the identities $V\mathsf P=-H_0\mathsf P$, 
$\mathsf PV=-\mathsf PH_0$ and $H_0G_{0,j}=G_{0,j}H_0=G_{0,j-2}$ for $j=2,4$.
We can use the last identities directly unlike in the proof of Lemma~\ref{1708201554},
since bound eigenfunctions decay sufficiently rapidly. Then it follows that 
\begin{align*}
G_0
&
=
G_{0,0}
-(G_{0,0}-\mathsf PG_{0,0})
vM_0^\dagger v^*
(G_{0,0}- G_{0,0}\mathsf P)
-\mathsf PG_{0,0}
-G_{0,0}\mathsf P
+\mathsf P G_{0,0}\mathsf P
\\&
=(I-\mathsf P)
\bigl[G_{0,0}
-G_{0,0}
vM_0^\dagger v^*G_{0,0}
\bigr](1-\mathsf P)
.
\end{align*}
This verifies \eqref{ex2-G0}. 

The computation of $G_1$ in this case is very long,
and we do not present all the details.
We only describe some of important steps. 
First by \eqref{170820157}
we can write $G_1$, using only $A_*$ and $B_*$, as
\begin{align*}
G_1
&
=
G_{0,1}
\\&\phantom{{}={}}{}
-G_{0,0}vA_{-1}v^*G_{0,3}
-G_{0,1}vA_{-1}v^*G_{0,2}
-G_{0,2}vA_{-1}v^*G_{0,1}
-G_{0,3}vA_{-1}v^*G_{0,0}
\\&\phantom{{}={}}{}
-G_{0,0}v\bigl(
A_{-1}B_1
+B_1A_{-1}
+A_0
\bigr)v^*G_{0,2}
\\&\phantom{{}={}}{}
-G_{0,1}v\bigl(
A_{-1}B_1
+B_1A_{-1}
+A_0
\bigr)v^*G_{0,1}
\\&\phantom{{}={}}{}
-G_{0,2}v\bigl(
A_{-1}B_1
+B_1A_{-1}
+A_0
\bigr)v^*G_{0,0}
\\&\phantom{{}={}}{}
-G_{0,0}v\bigl(
B_0
+A_{-1}B_2
+B_1A_{-1}B_1
+B_2A_{-1}
\bigr)v^*G_{0,1}
\\&\phantom{{}={}}{}
-G_{0,1}v\bigl(
B_0
+A_{-1}B_2
+B_1A_{-1}B_1
+B_2A_{-1}
\bigr)v^*G_{0,0}
\\&\phantom{{}={}}{}
-G_{0,0}v\bigl(
B_1
+A_{-1}B_3
+B_1A_{-1}B_2
+B_2A_{-1}B_1
+B_3A_{-1}
\\&\phantom{{}={}-G_{0,0}v\bigl({}}{}
+A_0B_2
+B_1A_0B_1
+B_2A_0
+A_1B_1
+B_1A_1
+A_2
\bigr)v^*G_{0,0}.
\end{align*}
Then we insert the expressions of $A_*$ and $B_*$.
If we implement some of vanishing relations coming from 
\eqref{am0b}, \eqref{16082220}, and \eqref{1608252242},
we arrive at
\begin{align*}
G_1&
=
G_{0,1}
-G_{0,0}vm_1^\dagger v^*G_{0,3}
-G_{0,3}vm_1^\dagger v^*G_{0,0}
\\&\phantom{{}={}}{}
-G_{0,0}v\bigl(
M_0^\dagger 
-m_1^\dagger M_2M_0^\dagger 
\bigr)v^*G_{0,1}
-G_{0,1}v\bigl(
M_0^\dagger 
-M_0^\dagger  M_2m_1^\dagger 
\bigr)v^*G_{0,0}
\\&\phantom{{}={}}{}
-G_{0,0}v\Bigl[
-M_0^\dagger  M_1M_0^\dagger 
+m_1^\dagger (- M_3M_0^\dagger
+M_2M_0^\dagger  M_1M_0^\dagger 
)
\\&\phantom{{}={}-G_{0,0}v\bigl(}{}
+(-M_0^\dagger  M_3
+M_0^\dagger  M_1M_0^\dagger  M_2
)m_1^\dagger 
-m_1^\dagger m_4m_1^\dagger\Bigr]v^*G_{0,0}
.
\end{align*}
If we insert \eqref{16082619} and 
$$m_4=Q\bigl[M_5-M_2(M_0^\dagger+Q)M_3-M_3(M_0^\dagger+Q)M_2+M_2M_0^\dagger M_1M_0^\dagger M_2Q\bigr],$$
which holds especially in this case due to the vanishing relations
implemented above, we come to 
\begin{align*}
G_1&
=
G_{0,1}
+G_{0,0}V\mathsf PVG_{0,3}
+G_{0,3}V\mathsf PVG_{0,0}
\\&\phantom{{}={}}{}
-G_{0,0}\bigl(
vM_0^\dagger v^*
+V\mathsf PVG_{0,2}v M_0^\dagger v^*
\bigr)G_{0,1}
\\&\phantom{{}={}}{}
-G_{0,1}\bigl(
vM_0^\dagger v^*
+vM_0^\dagger  v^*G_{0,2}V\mathsf PV
\bigr)G_{0,0}
\\&\phantom{{}={}}{}
-G_{0,0}\Bigl[
-vM_0^\dagger  v^*G_{0,1}vM_0^\dagger v^*
+ V\mathsf PVG_{0,3}vM_0^\dagger v^*
\\&\phantom{{}={}-G_{0,0}\Bigl[}{}
-V\mathsf PVG_{0,2}v M_0^\dagger  v^*G_{0,1}vM_0^\dagger v^*
+vM_0^\dagger  v^*G_{0,3}V\mathsf PV
\\&\phantom{{}={}-G_{0,0}\Bigl[}{}
-vM_0^\dagger  v^*G_{0,1}vM_0^\dagger  v^*G_{0,2}V\mathsf PV 
-V\mathsf PVG_{0,5}V\mathsf PV
\\&\phantom{{}={}-G_{0,0}\Bigl[}{}
+V\mathsf PVG_{0,2}v M_0^\dagger v^*G_{0,3}V\mathsf PV
+V\mathsf PVG_{0,3}vM_0^\dagger v^*G_{0,2}V\mathsf PV
\\&\phantom{{}={}-G_{0,0}\Bigl[}{}
-V\mathsf PVG_{0,2}vM_0^\dagger v^*G_{0,1}vM_0^\dagger v^*G_{0,2}V\mathsf PV 
\Bigr]G_{0,0}
.
\end{align*}
Use $V\mathsf P=-H_0\mathsf P$, $\mathsf PV=-\mathsf PH_0$
and \eqref{18082023a},
and then we obtain
\begin{align*}
G_1&=
(I-\mathsf P)
\bigl(I-G_{0,0}vM_0^\dagger v^*\bigr)G_{0,1}
\bigl(I-vM_0^\dagger v^*G_{0,0}\bigr)
(I-\mathsf P)
.
\end{align*}
Let us set 
\begin{align*}
\Psi_\alpha=(I-\mathsf P)\bigl(I-G_{0,0}vM_0^\dagger v^*\bigr)\mathbf n^{(\alpha)}.
\end{align*}
Then we can show, as in the proof of Theorem~\ref{170820205}, 
that $\Psi_\alpha-\mathbf n^{(\alpha)}\in\mathbf 1\oplus\mathcal L^{\beta-2}$,
and that $\Psi_\alpha\in\widetilde{\mathcal E}$.
Hence $\{\Psi_\alpha\}\subset \widetilde{\mathcal E}$ is an orthonormal non-resonance basis,
and the expression \eqref{ex2-G1} is obtained.
Hence we are done.
\end{proof}

\subsection{Exceptional threshold of the third kind}
\begin{theorem}\label{17082017}
Under the assumptions of Theorem~\ref{thm-ex3} one has 
\begin{align}
\begin{split}
G_0
&
=
(I-\mathsf P)
\bigl[
G_{0,0}
-G_{0,0}vm_0^\dagger v^*G_{0,1}
-G_{0,1}vm_0^\dagger v^*G_{0,0}
\\&\phantom{{}=(I-\mathsf P)\bigl(}{}
-G_{0,0}\bigl(I-vm_0^\dagger v^*G_{0,1}\bigr)
vM_0^\dagger v^*\bigl(I-G_{0,1}vm_0^\dagger v^*\bigr)G_{0,0}
\\&\phantom{{}=(I-\mathsf P)\bigl(}{}
+G_{0,0}vm_0^\dagger v^*G_{0,2}vm_0^\dagger v^*G_{0,0}
\\&\phantom{{}=(I-\mathsf P)\bigl(}{}
+G_{0,0}vm_0^\dagger v^*G_{0,0}\mathsf PG_{0,0}v m_0^\dagger v^*G_{0,0}
\bigr]\bigl(I-\mathsf P\bigr)
,
\end{split}
\label{170826232}\\
\begin{split}
G_1&=-\widetilde{\mathcal P}
+\mathcal PVG_{0,3}V\mathcal P
-\mathcal PVG_{0,2}vm_0^\dagger v^*G_{0,2}V\mathcal P
\\&\phantom{{}={}}{}
-(I-\mathsf P)\bigl[I-G_{0,0}\bigl(I-vm_0^\dagger v^*G_{0,1} \bigr)vM_0^\dagger v^*\bigr]
\\&\phantom{{}={}-{}}{}
\cdot
\bigl(I-G_{0,1}vm_0^\dagger v^* \bigr)G_{0,2}V\mathcal P
\\&\phantom{{}={}}{}
-\mathcal PVG_{0,2}\bigl(I-vm_0^\dagger  v^*G_{0,1}\bigr)
\\&\phantom{{}={}-{}}{}
\cdot
\bigl[I-vM_0^\dagger v^*\bigl(I-G_{0,1}vm_0^\dagger v^*\bigr)G_{0,0}\bigr](I-\mathsf P)
,
\end{split}
\label{170826233}
\end{align}
where $\widetilde{\mathcal P}$ is an orthogonal non-resonance projection associated with 
the orthonormal non-resonance basis $\{\Psi_{\widetilde\gamma}\}_{\widetilde\gamma}$ consisting of 
\begin{align*}
\Psi_{\widetilde\gamma}
=
(I-\mathsf P)\bigl[I-G_{0,0}\bigl(I-vm_0^\dagger  v^*G_{0,1}\bigr)vM_0^\dagger v^*\bigr]
\mathbf n_{\widetilde\gamma}
\end{align*}
with the notation from \eqref{1708202319}.
This in particular verifies the orthogonality \eqref{1708221320}.
\end{theorem}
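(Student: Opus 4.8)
The plan is to continue the computation exactly where the proof of Theorem~\ref{thm-ex3} left off. There the expansion \eqref{17082214} already writes each $G_j$ as a finite sum $G_{0,j}-\sum G_{0,j_1}vE_{j_2}v^*G_{0,j_3}$, and \eqref{160822553bbb}, \eqref{160822553bb} unfold the $E_*$ into the four families of coefficients: $B_*$ for $(M(\kappa)+Q)^{-1}$, then $C_*$ for $(m(\kappa)+S)^\dagger$, and $A_*$ for $q(\kappa)^\dagger$, with closed forms \eqref{1708212}, \eqref{1708231451}, \eqref{17082314}. First I would extract $G_0$ and $G_1$ as explicit sums in $A_*,B_*,C_*$, in the same bookkeeping style as the proofs of Theorems~\ref{17082015} and \ref{170820200}, and then collapse using the identities $B_0C_*=C_*B_0=C_*$ and $C_0A_*=A_*C_0=A_*$, together with the vanishing relations $Sv^*\mathbf n^{(\alpha)}=SQv^*\mathbf n^{(\alpha)}=0$ from \eqref{16082320} --- equivalently $SM_1=M_1S=0$ and $Sm_1S=SM_2S=q_0$. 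Unlike the second-kind case, here $QM_1$ and $m_0$ are nonzero, so more terms survive; this is the source of the extra length. The substitutions $M_j=v^*G_{0,j}v$, $q_0^\dagger=-Uv^*\mathsf PvU$ (proved already in the $G_{-2}$ step of Theorem~\ref{thm-ex3}), and $m_0^\dagger=-\sum_\gamma|\Phi_\gamma\rangle\langle\Phi_\gamma|$ from \eqref{17082422} translate the algebra back into operators on $\mathcal H$, and the $G_{0,2}$-terms are handled by Lemma~\ref{lemma24} in its relaxed form Lemma~\ref{12.12.27.15.3}, exactly as when \eqref{1708182228} was established.

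Next I would organize the result into the projected forms. For $G_0$ the relations $V\mathsf P=-H_0\mathsf P$, $\mathsf PV=-\mathsf PH_0$ and $H_0G_{0,j}=G_{0,j}H_0=-G_{0,j-2}$ --- applicable directly since bound eigenfunctions decay rapidly, as in the proof of Theorem~\ref{170820200} --- move the bound projection outside to give the sandwiched structure $(I-\mathsf P)[\cdots](I-\mathsf P)$ of \eqref{170826232}; the $\mathsf P$-sandwiched cross-term in \eqref{170826232} is exactly the correction arising because $G_{0,0}v\Phi_\gamma$ differs from $-\Psi_\gamma$ by an $\mathsf E$-component. For $G_1$ I would isolate the leading operator $T$ built from $G_{0,1}$ conjugated by $(I-\mathsf P)\bigl[I-G_{0,0}(I-vm_0^\dagger v^*G_{0,1})vM_0^\dagger v^*\bigr]$ and show it equals $-\sum_{\widetilde\gamma}|\Psi_{\widetilde\gamma}\rangle\langle\Psi_{\widetilde\gamma}|=-\widetilde{\mathcal P}$. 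This uses the analogue of Lemma~\ref{1708201554} --- $G_{0,0}V\mathcal P=-\mathcal P$, $G_{0,1}V\mathcal P=\sum_\gamma|\mathbf n_\gamma\rangle\langle\Psi_\gamma|$, and the higher formulas, whose proofs rest only on Proposition~\ref{13.1.16.2.51} and hence carry over verbatim to the third kind --- together with $G_{0,1}=-\sum_\alpha|\mathbf n^{(\alpha)}\rangle\langle\mathbf n^{(\alpha)}|$ and the splitting of the orthonormal basis $\{c^{(\gamma)}\}\cup\{c^{(\widetilde\gamma)}\}$ of $\mathbb C^N$, so that the $\mathcal P$-corrected $G_{0,1}$ becomes $-\sum_{\widetilde\gamma}|\mathbf n_{\widetilde\gamma}\rangle\langle\mathbf n_{\widetilde\gamma}|$. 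The residual terms $\mathcal PVG_{0,3}V\mathcal P$, $\mathcal PVG_{0,2}vm_0^\dagger v^*G_{0,2}V\mathcal P$ and the two cross-terms in \eqref{170826233} fall out directly from the bookkeeping.

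Finally I would verify that $\{\Psi_{\widetilde\gamma}\}$ is an orthonormal non-resonance basis: $\Psi_{\widetilde\gamma}-\mathbf n_{\widetilde\gamma}\in\mathbb C\mathbf 1\oplus\mathcal L^{\beta-2}$ follows from Lemma~\ref{12.11.24.18.24}; $H\Psi_{\widetilde\gamma}=0$ follows, as at the end of the proof of Theorem~\ref{17082015}, from the fact that $Qv^*$ (and $Sv^*$) applied to the bracketed operator annihilates $\mathbf n_{\widetilde\gamma}$; orthogonality to $\mathsf E$ is built in through the $(I-\mathsf P)$ factor; and the leading coefficients are $\{c^{(\widetilde\gamma)}\}$, orthonormal by construction, with the notation \eqref{1708202319}. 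The orthogonality \eqref{1708221320} between resonance and non-resonance eigenfunctions then follows because $\{c^{(\gamma)}\}\cup\{c^{(\widetilde\gamma)}\}$ is an orthonormal basis of $\mathbb C^N$. The real obstacle is not any single conceptual point but the sheer volume of bookkeeping: this is a two-level inversion (first $M_0$ singular, then $m_0$ singular but nonzero), so the number of surviving terms roughly doubles relative to Theorems~\ref{17082015} and \ref{170820200}, and one must carefully track which products vanish through $Sv^*\mathbf n^{(\alpha)}=0$, which collapse through the $B_0C_*=C_*$ and $C_0A_*=A_*$ identities, and how to interleave the bound projection $I-\mathsf P$ with the resonance projection $\mathcal P$ so that everything reassembles into the sandwiched forms \eqref{170826232} and \eqref{170826233}.
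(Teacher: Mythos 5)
Your proposal is correct and follows essentially the same route as the paper: continue the bookkeeping from the proof of Theorem~\ref{thm-ex3}, expand $G_0$ and $G_1$ in the coefficients $A_*,B_*,C_*$, collapse via $B_0C_*=C_*$, $C_0A_*=A_*$ and the vanishing relations stemming from $Sv^*\mathbf n^{(\alpha)}=0$ (the paper's \eqref{170824}), substitute $M_j=v^*G_{0,j}v$, $q_0^\dagger=-Uv^*\mathsf PvU$ and the resonance-projection identity for the $m_0^\dagger$-sandwich (the paper's \eqref{170828}), and finally identify the $G_{0,1}$-conjugated block with $-\widetilde{\mathcal P}$ by checking that the $\Psi_{\widetilde\gamma}$ form an orthonormal non-resonance basis exactly as at the end of the proof of Theorem~\ref{17082015}. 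The remaining work is, as you say, only the (very long) term-by-term verification.
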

\begin{remarks*}
\ 

1.
If we use \eqref{17082422} and \eqref{17082423}, we can further rewrite
the above formulas in terms of the vectors of the form \eqref{1708202319},
but here we do not elaborate further, cf.\ \eqref{170826231}.

2. 
The formulas in 
Theorems~\ref{170820205}, \ref{17082015} and \ref{170820200} 
may of course be seen as special cases of that in Theorem~\ref{17082017}.
\end{remarks*}
\begin{proof}
The discussion here is a continuation from the proof of Theorem~\ref{thm-ex3}. 
The computations are very long, and we present only important steps, skipping details. 
By \eqref{17082214},\eqref{160822553bbb} and \eqref{160822553bb}
we first write $G_0$ in terms only of $A_*,B_*,C_*$,
and then use the identities $B_0C_*=C_*B_0=C_*$ and $C_0A_*=A_*C_0=A_*$:
\begin{align*}
G_0&=G_{0,0}
-G_{0,1}vE_{-2}v^*G_{0,1}
-G_{0,0}vE_{-2}v^*G_{0,2}
-G_{0,2}vE_{-2}v^*G_{0,0}
\\&\phantom{{}={}}{}
-G_{0,0}vE_{-1}v^*G_{0,1}
-G_{0,1}vE_{-1}v^*G_{0,0}
-G_{0,0}vE_0v^*G_{0,0}
\\&
=
G_{0,0}
-G_{0,1}vA_0v^*G_{0,1}
-G_{0,0}vA_0v^*G_{0,2}
-G_{0,2}vA_0v^*G_{0,0}
\\&\phantom{{}={}}{}
-G_{0,0}v\Bigl(
C_0+A_1+A_0C_1+C_1A_0
+A_0B_1+B_1A_0\Bigr)v^*G_{0,1}
\\&\phantom{{}={}}{}
-G_{0,1}v\Bigl(
C_0+A_1+A_0C_1+C_1A_0
+A_0B_1+B_1A_0\Bigr)v^*G_{0,0}
\\&\phantom{{}={}}{}
-G_{0,0}v
\Bigl(
B_0
+B_1A_0B_1
+A_0B_2
+B_2A_0
\\&\phantom{{}=-G_{0,0}v\Bigl(}{}
+\bigl(C_0+A_1+A_0C_1+C_1A_0\bigr)B_1
+B_1\bigl(C_0+A_1+A_0C_1+C_1A_0\bigr)
\\&\phantom{{}=-G_{0,0}v\Bigl(}{}
+C_1+C_1A_0C_1+A_0C_2+C_2A_0
+A_1C_1+C_1A_1+A_2
\Bigr)
v^*G_{0,0}.
\end{align*}
Then we use \eqref{17082314}, \eqref{1708212} and \eqref{1708231451}. 
After some computations, particularly employing 
$m_0=QM_1Q$, $q_0=Sm_1S$ and vanishing relations
\begin{align}
Sv^*G_{0,1}=0,\quad 
SM_1=SB_1=Sm_0=Sm_2S=0
\label{170824}
\end{align}
essentially due to \eqref{16082320}, 
we have 
\begin{align*}
G_0&
=
G_{0,0}
-G_{0,0}vq_0^{\dagger}v^*G_{0,2}
-G_{0,2}vq_0^{\dagger}v^*G_{0,0}
\\&\phantom{{}={}}{}
-G_{0,0}v\bigl(I-q_0^{\dagger}m_1\bigr)m_0^\dagger v^*G_{0,1}
-G_{0,1}vm_0^\dagger \bigl(I-m_1q_0^{\dagger}\bigr)v^*G_{0,0}
\\&\phantom{{}={}}{}
-G_{0,0}v
\Bigl(
M_0^\dagger -Q
-q_0^{\dagger} M_2M_0^\dagger 
-q_0^{\dagger} M_2Q
-M_0^\dagger  M_2q_0^{\dagger}
-Q M_2q_0^{\dagger}
\\&\phantom{{}=-G_{0,0}v\Bigl(}{}
-m_0^\dagger M_1M_0^\dagger 
+q_0^{\dagger} m_1m_0^\dagger M_1M_0^\dagger 
+q_0^{\dagger} m_1
\\&\phantom{{}=-G_{0,0}v\Bigl(}{}
-M_0^\dagger  M_1m_0^\dagger 
+M_0^\dagger  M_1m_0^\dagger m_1q_0^{\dagger}
+m_1q_0^{\dagger}
\\&\phantom{{}=-G_{0,0}v\Bigl(}{}
-m_0^\dagger  m_1m_0^\dagger 
+2Sm_1S
+m_0^\dagger m_1q_0^{\dagger} m_1m_0^\dagger
+m_0^\dagger m_1 S
+S  m_1m_0^\dagger 
\\&\phantom{{}=-G_{0,0}v\Bigl(}{}
-q_0^{\dagger}m_2m_0^\dagger 
+q_0^{\dagger}m_1m_0^\dagger m_1m_0^\dagger 
+q_0^{\dagger}m_1m_0^\dagger m_1S
\\&\phantom{{}=-G_{0,0}v\Bigl(}{}
-m_0^\dagger m_2q_0^{\dagger}
+m_0^\dagger m_1m_0^\dagger m_1q_0^{\dagger}
+Sm_1m_0^\dagger m_1q_0^{\dagger}
\\&\phantom{{}=-G_{0,0}v\Bigl(}{}
+q_0^\dagger q_1q_0^\dagger m_1m_0^\dagger
+q_0^\dagger q_1
+m_0^\dagger  m_1q_0^\dagger q_1q_0^\dagger
+ q_1q_0^\dagger
\\&\phantom{{}=-G_{0,0}v\Bigl(}{}
-q_0^\dagger q_2q_0^\dagger+q_0^\dagger q_1q_0^\dagger q_1q_0^\dagger
\Bigr)
v^*G_{0,0}.
\end{align*}
We next use \eqref{am0bb} and \eqref{170824}: 
\begin{align*}
G_0&
=
G_{0,0}
-G_{0,0}vq_0^{\dagger}v^*G_{0,2}
-G_{0,2}vq_0^{\dagger}v^*G_{0,0}
\\&\phantom{{}={}}{}
-G_{0,0}v\bigl(I-q_0^{\dagger}m_1\bigr)m_0^\dagger v^*G_{0,1}
-G_{0,1}vm_0^\dagger \bigl(I-m_1q_0^{\dagger}\bigr)v^*G_{0,0}
\\&\phantom{{}={}}{}
-G_{0,0}v
\Bigl(
M_0^\dagger -Q
-q_0^{\dagger} M_2M_0^\dagger 
-q_0^{\dagger} M_2Q
-M_0^\dagger  M_2q_0^{\dagger}
-Q M_2q_0^{\dagger}
\\&\phantom{{}=-G_{0,0}v\Bigl(}{}
-m_0^\dagger M_1M_0^\dagger 
+q_0^{\dagger} m_1m_0^\dagger M_1M_0^\dagger 
+q_0^{\dagger} m_1
\\&\phantom{{}=-G_{0,0}v\Bigl(}{}
-M_0^\dagger  M_1m_0^\dagger 
+M_0^\dagger  M_1m_0^\dagger m_1q_0^{\dagger}
+m_1q_0^{\dagger}
\\&\phantom{{}=-G_{0,0}v\Bigl(}{}
-m_0^\dagger  m_1m_0^\dagger 
+m_0^\dagger m_1q_0^{\dagger} m_1m_0^\dagger
\\&\phantom{{}=-G_{0,0}v\Bigl(}{}
-q_0^{\dagger}m_2m_0^\dagger 
+q_0^{\dagger}m_1m_0^\dagger m_1m_0^\dagger 
-m_0^\dagger m_2q_0^{\dagger}
+m_0^\dagger m_1m_0^\dagger m_1q_0^{\dagger}
\\&\phantom{{}=-G_{0,0}v\Bigl(}{}
-q_0^\dagger m_1m_0^\dagger m_1q_0^\dagger m_1m_0^\dagger
-m_0^\dagger  m_1q_0^\dagger m_1m_0^\dagger m_1q_0^\dagger
\\&\phantom{{}=-G_{0,0}v\Bigl(}{}
-q_0^\dagger m_3q_0^\dagger
+q_0^\dagger m_1m_0^\dagger m_2q_0^\dagger
+q_0^\dagger m_2m_0^\dagger m_1q_0^\dagger
-q_0^\dagger m_1m_0^\dagger m_1m_0^\dagger m_1q_0^\dagger
\\&\phantom{{}=-G_{0,0}v\Bigl(}{}
+q_0^\dagger m_1m_0^\dagger m_1q_0^\dagger m_1m_0^\dagger m_1q_0^\dagger
\Bigr)
v^*G_{0,0}.
\end{align*}
Insert \eqref{1608231744}--\eqref{1608231746} and use  \eqref{170824}:
\begin{align*}
G_0&
=
G_{0,0}
-G_{0,0}vq_0^{\dagger}v^*G_{0,2}
-G_{0,2}vq_0^{\dagger}v^*G_{0,0}
\\&\phantom{{}={}}{}
-G_{0,0}v\bigl(I-q_0^{\dagger}M_2\bigr)m_0^\dagger v^*G_{0,1}
-G_{0,1}vm_0^\dagger\bigl(I-M_2q_0^{\dagger}\bigr) v^*G_{0,0}
\\&\phantom{{}={}}{}
-G_{0,0}v
\Bigl(
M_0^\dagger 
-q_0^{\dagger} M_2M_0^\dagger 
-M_0^\dagger  M_2q_0^{\dagger}
\\&\phantom{{}=-G_{0,0}v\Bigl(}{}
-m_0^\dagger M_1M_0^\dagger 
+q_0^{\dagger} M_2m_0^\dagger M_1M_0^\dagger 
-M_0^\dagger  M_1m_0^\dagger 
+M_0^\dagger  M_1m_0^\dagger M_2q_0^{\dagger}
\\&\phantom{{}=-G_{0,0}v\Bigl(}{}
-m_0^\dagger M_2m_0^\dagger 
+m_0^\dagger M_1M_0^\dagger M_1m_0^\dagger 
+m_0^\dagger M_2q_0^{\dagger} M_2m_0^\dagger
\\&\phantom{{}=-G_{0,0}v\Bigl(}{}
-q_0^{\dagger}M_3m_0^\dagger 
+q_0^{\dagger}M_2M_0^\dagger M_1m_0^\dagger 
+q_0^{\dagger}M_2m_0^\dagger M_2m_0^\dagger 
\\&\phantom{{}=-G_{0,0}v\Bigl(}{}
-q_0^{\dagger}M_2m_0^\dagger M_1M_0^\dagger M_1m_0^\dagger 
\\&\phantom{{}=-G_{0,0}v\Bigl(}{}
-m_0^\dagger M_3q_0^{\dagger}
+m_0^\dagger M_1M_0^\dagger M_2q_0^{\dagger}
+m_0^\dagger M_2m_0^\dagger M_2q_0^{\dagger}
\\&\phantom{{}=-G_{0,0}v\Bigl(}{}
-m_0^\dagger M_1M_0^\dagger M_1m_0^\dagger M_2q_0^{\dagger}
\\&\phantom{{}=-G_{0,0}v\Bigl(}{}
-q_0^\dagger M_2m_0^\dagger M_2q_0^\dagger M_2m_0^\dagger
-m_0^\dagger  M_2q_0^\dagger M_2m_0^\dagger M_2q_0^\dagger
\\&\phantom{{}=-G_{0,0}v\Bigl(}{}
-q_0^\dagger M_4q_0^\dagger
+q_0^\dagger M_2M_0^\dagger M_2q_0^\dagger
\\&\phantom{{}=-G_{0,0}v\Bigl(}{}
+q_0^\dagger M_2m_0^\dagger M_3q_0^\dagger
-q_0^\dagger M_2m_0^\dagger M_1M_0^\dagger M_2q_0^\dagger
+q_0^\dagger M_3m_0^\dagger M_2q_0^\dagger
\\&\phantom{{}=-G_{0,0}v\Bigl(}{}
-q_0^\dagger M_2M_0^\dagger M_1m_0^\dagger M_2q_0^\dagger
-q_0^\dagger M_2m_0^\dagger M_2m_0^\dagger M_2q_0^\dagger
\\&\phantom{{}=-G_{0,0}v\Bigl(}{}
+q_0^\dagger M_2m_0^\dagger M_1M_0^\dagger M_1m_0^\dagger M_2q_0^\dagger
+q_0^\dagger M_2m_0^\dagger M_2q_0^\dagger M_2m_0^\dagger M_2q_0^\dagger
\Bigr)
v^*G_{0,0}
\\&
=
G_{0,0}
+G_{0,0}vq_0^\dagger M_4q_0^\dagger v^*G_{0,0}
-G_{0,0}vq_0^{\dagger}v^*G_{0,2}
-G_{0,2}vq_0^{\dagger}v^*G_{0,0}
\\&\phantom{{}={}}{}
-G_{0,0}v\bigl(I-q_0^{\dagger}M_2\bigr)M_0^\dagger\bigl(I-M_2q_0^{\dagger}\bigr)v^*G_{0,0}
\\&\phantom{{}={}}{}
-G_{0,0}v\bigl(I-q_0^{\dagger}M_2\bigr)m_0^\dagger M_1M_0^\dagger M_1m_0^\dagger \bigl(I-M_2q_0^{\dagger}\bigr)v^*G_{0,0}
\\&\phantom{{}={}}{}
+G_{0,0}v
\bigl(I-q_0^\dagger M_2\bigr)m_0^\dagger M_2m_0^\dagger \bigl(I-M_2q_0^{\dagger}\bigr)v^*G_{0,0}
\\&\phantom{{}={}}{}
-G_{0,0}v
\bigl(I-q_0^\dagger M_2\bigr)m_0^\dagger M_2q_0^{\dagger} M_2m_0^\dagger\bigl(I-M_2q_0^{\dagger}\bigr)v^*G_{0,0}
\\&\phantom{{}={}}{}
-G_{0,0}v\bigl(I-q_0^{\dagger}M_2\bigr)m_0^\dagger 
\Bigl(v^*G_{0,1}-M_3q_0^{\dagger}v^*G_{0,0}-M_1M_0^\dagger\bigl(I-M_2q_0^{\dagger}\bigr)v^*G_{0,0}\Bigr)
\\&\phantom{{}={}}{}
-\Bigl(G_{0,1}v-G_{0,0}vq_0^{\dagger}M_3-G_{0,0}v\bigl(I-q_0^{\dagger}M_2\bigr)M_0^\dagger M_1\Bigr)
m_0^\dagger \bigl(I-M_2q_0^{\dagger}\bigr)v^*G_{0,0}
.
\end{align*}
Now we use $M_j=v^*G_{0,j}v$ for $j=1,2,3$, $q_0^\dagger=-Uv^*\mathsf PvU$, 
$V\mathsf P=-H_0\mathsf P$ and $\mathsf PV=-\mathsf PH_0$:
\begin{align*}
G_1
&
=
(I-\mathsf P)\Bigl(G_{0,0}-G_{0,0}vM_0^\dagger v^*G_{0,0}
+G_{0,0}vm_0^\dagger v^*G_{0,2}vm_0^\dagger v^*G_{0,0}
\\&\phantom{{}={}(I-\mathsf P)\Bigl({}}{}
-G_{0,0}vm_0^\dagger v^*G_{0,1}vM_0^\dagger v^*G_{0,1}vm_0^\dagger v^*G_{0,0}
\\&\phantom{{}={}(I-\mathsf P)\Bigl({}}{}
+G_{0,0}vm_0^\dagger v^*G_{0,0}\mathsf PG_{0,0}v m_0^\dagger v^*G_{0,0}
\\&\phantom{{}={}(I-\mathsf P)\Bigl({}}{}
-G_{0,0}vm_0^\dagger v^*G_{0,1}
-G_{0,1}vm_0^\dagger v^*G_{0,0}
\\&\phantom{{}={}(I-\mathsf P)\Bigl({}}{}
+G_{0,0}vm_0^\dagger v^*G_{0,1}vM_0^\dagger v^*G_{0,0}
+G_{0,0}vM_0^\dagger v^*G_{0,1}vm_0^\dagger v^*G_{0,0}\Bigr)\bigl(I-\mathsf P\bigr)
.
\end{align*}
Hence we obtain \eqref{170826232}.

Let us next compute $G_1$. 
We proceed similarly to $G_0$.
By \eqref{17082214}, \eqref{160822553bbb} and \eqref{160822553bb}
we write $G_0$ in terms only of $A_*,B_*,C_*$,
noting $B_0C_*=C_*B_0=C_*$ and $C_0A_*=A_*C_0=A_*$:
\begin{align*}
G_1
&
=G_{0,1}
\\&\phantom{{}={}}{}
-G_{0,0}vE_{-2}v^*G_{0,3}
-G_{0,3}vE_{-2}v^*G_{0,0}
-G_{0,1}vE_{-2}v^*G_{0,2}
-G_{0,2}vE_{-2}v^*G_{0,1}
\\&\phantom{{}={}}{}
-G_{0,1}vE_{-1}v^*G_{0,1}
-G_{0,0}vE_{-1}v^*G_{0,2}
-G_{0,2}vE_{-1}v^*G_{0,0}
\\&\phantom{{}={}}{}
-G_{0,0}vE_0v^*G_{0,1}
-G_{0,1}vE_0v^*G_{0,0}
-G_{0,0}vE_1v^*G_{0,0}
\\&
=
G_{0,1}
-G_{0,0}vA_0v^*G_{0,3}
-G_{0,3}vA_0v^*G_{0,0}
-G_{0,1}vA_0v^*G_{0,2}
-G_{0,2}vA_0v^*G_{0,1}
\\&\phantom{{}={}}{}
-G_{0,1}v\Bigl(C_0+A_1+A_0C_1+C_1A_0+A_0B_1+B_1A_0\Bigr)v^*G_{0,1}
\\&\phantom{{}={}}{}
-G_{0,0}v\Bigl(C_0+A_1+A_0C_1+C_1A_0+A_0B_1+B_1A_0\Bigr)v^*G_{0,2}
\\&\phantom{{}={}}{}
-G_{0,2}v\Bigl(C_0+A_1+A_0C_1+C_1A_0+A_0B_1+B_1A_0\Bigr)v^*G_{0,0}
\\&\phantom{{}={}}{}
-G_{0,0}v\Bigl(
B_0
+B_1A_0B_1
+A_0B_2
+B_2A_0
\\&\phantom{{}=-G_{0,0}v\Bigl(}{}
+\bigl(C_0+A_1+A_0C_1+C_1A_0\bigr)B_1
+B_1\bigl(C_0+A_1+A_0C_1+C_1A_0\bigr)
\\&\phantom{{}=-G_{0,0}v\Bigl(}{}
+C_1+C_1A_0C_1+A_0C_2+C_2A_0
+A_1C_1+C_1A_1+A_2
\Bigr)v^*G_{0,1}
\\&\phantom{{}={}}{}
-G_{0,1}v\Bigl(
B_0
+B_1A_0B_1
+A_0B_2
+B_2A_0
\\&\phantom{{}=-G_{0,0}v\Bigl(}{}
+\bigl(C_0+A_1+A_0C_1+C_1A_0\bigr)B_1
+B_1\bigl(C_0+A_1+A_0C_1+C_1A_0\bigr)
\\&\phantom{{}=-G_{0,0}v\Bigl(}{}
+C_1+C_1A_0C_1+A_0C_2+C_2A_0+A_1C_1+C_1A_1+A_2
\Bigr)v^*G_{0,0}
\\&\phantom{{}={}}{}
-G_{0,0}v\Bigl(
B_1
+A_0B_3
+B_3A_0
+B_1A_0B_2
+B_2A_0B_1
\\&\phantom{{}=-G_{0,0}v\Bigl(}{}
+B_1\bigl(C_0+A_1+A_0C_1+C_1A_0\bigr)B_1
\\&\phantom{{}=-G_{0,0}v\Bigl(}{}
+\bigl(C_0+A_1+A_0C_1+C_1A_0\bigr)B_2
+B_2\bigl(C_0+A_1+A_0C_1+C_1A_0\bigr)
\\&\phantom{{}=-G_{0,0}v\Bigl(}{}
+\bigl(C_1+C_1A_0C_1+A_0C_2+C_2A_0
+A_1C_1+C_1A_1+A_2\bigr)B_1
\\&\phantom{{}=-G_{0,0}v\Bigl(}{}
+B_1\bigl(C_1+C_1A_0C_1+A_0C_2+C_2A_0
+A_1C_1+C_1A_1+A_2\bigr)
\\&\phantom{{}=-G_{0,0}v\Bigl(}{}
+C_2+A_0C_3+C_3A_0+C_1A_0C_2+C_2A_0C_1+C_1A_1C_1
\\&\phantom{{}=-G_{0,0}v\Bigl(}{}
+A_1C_2+C_2A_1
+A_2C_1+C_1A_2+A_3
\Bigr)
v^*G_{0,0}
\end{align*}
We proceed similarly to $G_0$, 
but formulas for $G_1$ are extremely long,
and let omit most of the computation process.
We use \eqref{17082314}, \eqref{1708212}, \eqref{1708231451} 
for the expressions of 
$A_*$, $B_*$ and $C_*$, 
and then substitute 
\eqref{1608231744}--\eqref{1608231747} and \eqref{am0bb} for $m_*$ and $q_*$, respectively.
We also use \eqref{170824} many times. As a result, we obtain
\begin{align*}
G_1
&
=
G_{0,1}
-G_{0,0}vq_0^{\dagger}v^*G_{0,3}
-G_{0,3}vq_0^{\dagger}v^*G_{0,0}
-G_{0,1}vm_0^\dagger v^*G_{0,1}
\\&\phantom{{}={}}{}
-G_{0,0}v\bigl(I-q_0^{\dagger}M_2\bigr)m_0^\dagger \bigl(I- M_2q_0^{\dagger}\bigr)v^*G_{0,2}
\\&\phantom{{}={}}{}
-G_{0,2}v\bigl(I-q_0^{\dagger}M_2\bigr)m_0^\dagger \bigl(I- M_2q_0^{\dagger}\bigr)v^*G_{0,0}
\\&\phantom{{}={}}{}
-G_{0,0}v\Bigl(
-q_0^{\dagger}M_3m_0^\dagger
+\bigl(I-q_0^\dagger M_2\bigr)\bigl(I-m_0^\dagger  M_1\bigr)M_0^\dagger\bigl(I-M_1m_0^\dagger  \bigr)
\\&\phantom{{}=-G_{0,0}v\Bigl(}{}
-\bigl(I-q_0^\dagger M_2\bigr)m_0^\dagger\bigl(I-M_2q_0^{\dagger}\bigr)M_2m_0^\dagger  
\Bigr)v^*G_{0,1}
\\&\phantom{{}={}}{}
-G_{0,1}v\Bigl(
-m_0^\dagger  M_3 q_0^{\dagger}
+\bigl(I-m_0^\dagger  M_1\bigr)M_0^\dagger\bigl(I- M_1m_0^\dagger \bigr) \bigl(I-M_2q_0^{\dagger}\bigr)
\\&\phantom{{}=-G_{0,0}v\Bigl(}{}
-m_0^\dagger M_2\bigl(I- q_0^{\dagger}M_2\bigr)m_0^\dagger  \bigl(I-M_2q_0^{\dagger}\bigr)
\Bigr)v^*G_{0,0}
\\&\phantom{{}={}}{}
-G_{0,0}v\Bigl(
-(M_0^\dagger+Q) \bigl(M_1- M_1 m_0^\dagger M_1\bigr)(M_0^\dagger+Q) 
\\&\phantom{{}=-G_{0,0}v\Bigl(}{}
+(M_0^\dagger+Q)\bigl(M_1-M_1 m_0^\dagger M_1\bigr)(M_0^\dagger+Q)
\\&\phantom{{}=-G_{0,0}v\Bigl({}+{}}{}
\cdot 
\bigl(M_1m_0^\dagger +M_2q_0^{\dagger}-M_1m_0^\dagger M_2 q_0^{\dagger}\bigr)
\\&\phantom{{}=-G_{0,0}v\Bigl(}{}
+\bigl( m_0^\dagger M_1+q_0^{\dagger}M_2 -q_0^{\dagger}M_2m_0^\dagger M_1\bigr)
\\&\phantom{{}=-G_{0,0}v\Bigl({}+{}}{}
\cdot 
(M_0^\dagger+Q) \bigl(M_1-M_1m_0^\dagger M_1\bigr)(M_0^\dagger+Q)
\\&\phantom{{}=-G_{0,0}v\Bigl(}{}
-\bigl( I-q_0^{\dagger}M_2\bigr)m_0^\dagger\bigl(I-M_2 q_0^{\dagger}\bigr)M_2\bigl(I-m_0^\dagger M_1\bigr)(M_0^\dagger+Q)
\\&\phantom{{}=-G_{0,0}v\Bigl(}{}
-(M_0^\dagger+Q)\bigl(I-M_1 m_0^\dagger\bigr)M_2\bigl(I-q_0^{\dagger} M_2\bigr)m_0^\dagger \bigl(I-M_2 q_0^{\dagger}\bigr)
\\&\phantom{{}=-G_{0,0}v\Bigl(}{}
-q_0^{\dagger}M_3\bigl(I-m_0^\dagger M_1\bigr)(M_0^\dagger+Q)
-(M_0^\dagger+Q)\bigl(I-M_1m_0^\dagger\bigr)M_3q_0^{\dagger}
\\&\phantom{{}=-G_{0,0}v\Bigl(}{}
-q_0^\dagger \bigl(m_4-m_2m_0^\dagger m_2\bigr)q_0^\dagger
\\&\phantom{{}=-G_{0,0}v\Bigl(}{}
-\bigl(I-q_0^\dagger m_1\bigr)m_0^\dagger \bigl(I-m_1q_0^{\dagger}\bigr)
\bigl(m_2-m_1m_0^\dagger m_1\bigr)
\\&\phantom{{}=-G_{0,0}v\Bigl({}-{}}{}
\cdot\bigl(I-q_0^{\dagger}m_1\bigr)m_0^\dagger  \bigl(I-m_1q_0^\dagger\bigr)
\\&\phantom{{}=-G_{0,0}v\Bigl(}{}
-q_0^{\dagger}\bigl(m_3-m_2m_0^\dagger m_1\bigr) \bigl(I-q_0^\dagger m_1\bigr)m_0^\dagger \bigl(I-m_1q_0^\dagger\bigr)
\\&\phantom{{}=-G_{0,0}v\Bigl(}{}
-\bigl(I-q_0^\dagger m_1\bigr)m_0^\dagger  \bigl(I-m_1 q_0^\dagger\bigr)\bigl(m_3-m_1m_0^\dagger m_2\bigr)q_0^{\dagger}
\Bigr)
v^*G_{0,0}
\\&
=
G_{0,1}
-G_{0,0}vq_0^{\dagger}v^*G_{0,3}
-G_{0,3}vq_0^{\dagger}v^*G_{0,0}
-G_{0,1}vm_0^\dagger v^*G_{0,1}
\\&\phantom{{}={}}{}
-G_{0,0}v\bigl(I-q_0^{\dagger}M_2\bigr)m_0^\dagger \bigl(I- M_2q_0^{\dagger}\bigr)v^*G_{0,2}
\\&\phantom{{}={}}{}
-G_{0,2}v\bigl(I-q_0^{\dagger}M_2\bigr)m_0^\dagger \bigl(I- M_2q_0^{\dagger}\bigr)v^*G_{0,0}
\\&\phantom{{}={}}{}
-G_{0,0}v\Bigl(
-q_0^{\dagger}M_3m_0^\dagger
+\bigl(I-q_0^\dagger M_2\bigr)\bigl(I-m_0^\dagger  M_1\bigr)M_0^\dagger\bigl(I-M_1m_0^\dagger  \bigr)
\\&\phantom{{}=-G_{0,0}v\Bigl(}{}
-\bigl(I-q_0^\dagger M_2\bigr)m_0^\dagger\bigl(I-M_2q_0^{\dagger}\bigr)M_2m_0^\dagger  
\Bigr)v^*G_{0,1}
\\&\phantom{{}={}}{}
-G_{0,1}v\Bigl(
-m_0^\dagger  M_3 q_0^{\dagger}
+\bigl(I-m_0^\dagger  M_1\bigr)M_0^\dagger\bigl(I- M_1m_0^\dagger \bigr) \bigl(I-M_2q_0^{\dagger}\bigr)
\\&\phantom{{}=-G_{0,0}v\Bigl(}{}
-m_0^\dagger M_2\bigl(I- q_0^{\dagger}M_2\bigr)m_0^\dagger  \bigl(I-M_2q_0^{\dagger}\bigr)
\Bigr)v^*G_{0,0}
\\&\phantom{{}={}}{}
-G_{0,0}v\Bigl(
-M_0^\dagger \bigl(M_1- M_1 m_0^\dagger M_1\bigr)M_0^\dagger 
\\&\phantom{{}=-G_{0,0}v\Bigl(}{}
+M_0^\dagger\bigl(M_1-M_1 m_0^\dagger M_1\bigr)M_0^\dagger \bigl(M_1m_0^\dagger +M_2q_0^{\dagger}-M_1m_0^\dagger M_2 q_0^{\dagger}\bigr)
\\&\phantom{{}=-G_{0,0}v\Bigl(}{}
+\bigl( m_0^\dagger M_1+q_0^{\dagger}M_2 -q_0^{\dagger}M_2m_0^\dagger M_1\bigr)M_0^\dagger \bigl(M_1-M_1m_0^\dagger M_1\bigr)M_0^\dagger
\\&\phantom{{}=-G_{0,0}v\Bigl(}{}
-\bigl( I-q_0^{\dagger}M_2\bigr)m_0^\dagger\bigl(I-M_2 q_0^{\dagger}\bigr)M_2\bigl(I-m_0^\dagger M_1\bigr)M_0^\dagger
\\&\phantom{{}=-G_{0,0}v\Bigl(}{}
-M_0^\dagger\bigl(I-M_1 m_0^\dagger\bigr)M_2\bigl(I-q_0^{\dagger} M_2\bigr)m_0^\dagger \bigl(I-M_2 q_0^{\dagger}\bigr)
\\&\phantom{{}=-G_{0,0}v\Bigl(}{}
-q_0^{\dagger}M_3\bigl(I-m_0^\dagger M_1\bigr)M_0^\dagger
-M_0^\dagger\bigl(I-M_1m_0^\dagger\bigr)M_3q_0^{\dagger}
\\&\phantom{{}=-G_{0,0}v\Bigl(}{}
-q_0^\dagger \Bigl[
M_5
-M_2M_0^\dagger M_3
-M_3M_0^\dagger M_2
+M_2M_0^\dagger \bigl(M_1-M_1m_0^\dagger M_1\bigr)M_0^\dagger M_2
\\&\phantom{{}=-G_{0,0}v\Bigl({}-q_0^\dagger \Bigl[}{}
-M_3m_0^\dagger M_3
+M_2M_0^\dagger M_1m_0^\dagger M_3
+M_3m_0^\dagger M_1M_0^\dagger M_2
\Bigr]q_0^\dagger
\\&\phantom{{}=-G_{0,0}v\Bigl(}{}
-\bigl(I-q_0^\dagger m_1\bigr)m_0^\dagger \bigl(I-m_1q_0^{\dagger}\bigr)
\\&\phantom{{}=-G_{0,0}v\Bigl({}-{}}{}
\cdot\Bigl[
M_3
+M_1M_0^\dagger \bigl(M_1-M_1m_0^\dagger M_1\bigr)M_0^\dagger M_1
-M_2m_0^\dagger M_2
\\&\phantom{{}=-G_{0,0}v\Bigl({}-{}\cdot\Bigl[{}}{}
-M_1M_0^\dagger \bigl(I-M_1m_0^\dagger \bigr)M_2
-M_2\big(I-m_0^\dagger M_1\bigr)M_0^\dagger M_1
\Bigr]
\\&\phantom{{}=-G_{0,0}v\Bigl({}-{}}{}
\cdot\bigl(I-q_0^{\dagger}m_1\bigr)m_0^\dagger  \bigl(I-m_1q_0^\dagger\bigr)
\\&\phantom{{}=-G_{0,0}v\Bigl(}{}
-q_0^{\dagger}\Bigl[
M_4
-M_2M_0^\dagger \bigl(I-M_1m_0^\dagger \bigr)M_2
-M_3\bigl(I-m_0^\dagger M_1\bigr)M_0^\dagger M_1
\\&\phantom{{}=-G_{0,0}v\Bigl({}-q_0^\dagger \Bigl[}{}
+M_2M_0^\dagger \bigl(I-M_1m_0^\dagger \bigr)M_1M_0^\dagger M_1
-M_3m_0^\dagger M_2
\Bigr] 
\\&\phantom{{}=-G_{0,0}v\Bigl({}-{}}{}
\cdot\bigl(I-q_0^\dagger m_1\bigr)m_0^\dagger \bigl(I-m_1q_0^\dagger\bigr)
\\&\phantom{{}=-G_{0,0}v\Bigl(}{}
-\bigl(I-q_0^\dagger m_1\bigr)m_0^\dagger  \bigl(I-m_1 q_0^\dagger\bigr)
\\&\phantom{{}=-G_{0,0}v\Bigl({}-{}}{}
\cdot\Bigl[
M_4
-M_1M_0^\dagger \bigl(I-M_1m_0^\dagger \bigr)M_3
-M_2\bigl(I-m_0^\dagger M_1\bigr)M_0^\dagger M_2
\\&\phantom{{}=-G_{0,0}v\Bigl({}-{}\cdot\Bigl[{}}{}
+M_1M_0^\dagger M_1\big(I-m_0^\dagger M_1\bigr)M_0^\dagger M_2
-M_2m_0^\dagger M_3
\Bigr]q_0^{\dagger}
\Bigr)
v^*G_{0,0}
\end{align*}
We rewrite it in terms of $G_{0,*}$, using $M_j=v^*G_{0,j}v$ for $j=1,\ldots, 5$:
\begin{align*}
G_1&
=
\bigl[I-G_{0,0}\bigl(I-vq_0^\dagger v^*G_{0,2}\bigr)\bigl(I-vm_0^\dagger  v^*G_{0,1}\bigr)vM_0^\dagger v^*\bigr]
\\&\phantom{{}={}}{}
\cdot\bigl(G_{0,1}-G_{0,1}vm_0^\dagger  v^*G_{0,1}\bigr)
\bigl[I-vM_0^\dagger v^*\bigl(I- G_{0,1}vm_0^\dagger v^*\bigr) \bigl(I-G_{0,2}vq_0^{\dagger}v^*\bigr)G_{0,0}\bigr]
\\&\phantom{{}={}}{}
-G_{0,0}\bigl[\bigl(I-vq_0^\dagger G_{0,2}\bigr)vm_0^\dagger v^*\bigl(I-G_{0,2}vq_0^{\dagger}v^*\bigr)G_{0,2}+vq_0^\dagger v^*G_{0,3}\bigr]
\\&\phantom{{}={}+{}}{}
\cdot
vm_0^\dagger v^*
\bigl[G_{0,2}\bigl(I-vq_0^{\dagger}v^*G_{0,2}\bigr)vm_0^\dagger v^* \bigl(I-G_{0,2}vq_0^\dagger v^*\bigr)+G_{0,3}vq_0^\dagger v^*\bigr]G_{0,0}
\\&\phantom{{}={}}{}
+G_{0,0}\bigl(I-vq_0^\dagger G_{0,2}\bigr)vm_0^\dagger v^*\bigl(I-G_{0,2}vq_0^{\dagger}v^*\bigr)
\\&\phantom{{}={}+{}}{}
\cdot
G_{0,3}\bigl(I-vq_0^{\dagger}v^*G_{0,2}\bigr)vm_0^\dagger v^* \bigl(I-G_{0,2}vq_0^\dagger v^*\bigr)G_{0,0}
\\&\phantom{{}={}}{}
+G_{0,0}vq_0^\dagger v^*G_{0,5}vq_0^\dagger v^*G_{0,0}
\\&\phantom{{}={}}{}
+G_{0,0}vq_0^{\dagger}v^*G_{0,4} \bigl(I-vq_0^\dagger v^*G_{0,2}\bigr)vm_0^\dagger v^*\bigl(I-G_{0,2}vq_0^\dagger v^*\bigr)G_{0,0}
\\&\phantom{{}={}}{}
+G_{0,0}\bigl(I-vq_0^\dagger v^*G_{0,2}\bigr)vm_0^\dagger v^* \bigl(I-G_{0,2}vq_0^\dagger v^*\bigr)G_{0,4}vq_0^{\dagger}v^*G_{0,0}
\\&\phantom{{}={}}{}
-G_{0,0}\bigl(I-vq_0^{\dagger}v^*G_{0,2}\bigr)vm_0^\dagger v^*
\bigl(I- G_{0,2}vq_0^{\dagger}v^*\bigr)G_{0,2}\bigl(I-vm_0^\dagger  v^*G_{0,1}\bigr)
\\&\phantom{{}={}-{}}{}\cdot
\bigl[I-vM_0^\dagger v^*\bigl(I-G_{0,1}vm_0^\dagger v^*\bigr)\bigl(I-G_{0,2}vq_0^{\dagger}v^*\bigr)G_{0,0}\bigr]
\\&\phantom{{}={}}{}
-\bigl[I-G_{0,0}\bigl(I-vq_0^{\dagger}v^*G_{0,2}\bigr)\bigl(I-vm_0^\dagger v^*G_{0,1} \bigr)vM_0^\dagger v^*\bigr]
\\&\phantom{{}={}+{}}{}
\cdot
\bigl(I-G_{0,1}vm_0^\dagger v^* \bigr)G_{0,2}\bigl(I-vq_0^{\dagger}v^*G_{0,2}\bigr)vm_0^\dagger v^*\bigl(I- G_{0,2}vq_0^{\dagger}v^*\bigr)G_{0,0}
\\&\phantom{{}={}}{}
-G_{0,0}vq_0^{\dagger}v^*G_{0,3}\bigl(I-vm_0^\dagger v^*G_{0,1}\bigr)
\\&\phantom{{}={}+{}}{}
\cdot
\bigl[I-vM_0^\dagger v^*\bigl(I-G_{0,1}vm_0^\dagger v^*\bigr)\bigl(I-G_{0,2}vq_0^\dagger v^*\bigr) G_{0,0}\bigr]
\\&\phantom{{}={}}{}
-\bigl[I-G_{0,0}\bigl(I-G_{0,0}vq_0^\dagger v^*G_{0,2}\bigr)\bigl(I-vm_0^\dagger v^*G_{0,1}\bigr)vM_0^\dagger v^*\bigr]
\\&\phantom{{}={}+{}}{}
\cdot
\bigl(I-G_{0,1}vm_0^\dagger v^*\bigr)G_{0,3}vq_0^{\dagger}v^*G_{0,0}
.
\end{align*}
Use 
\begin{align}
\bigl(I-vq_0^\dagger v^*G_{0,2}\bigr)vm_0^\dagger v^* \bigl(I-G_{0,2}vq_0^\dagger v^*\bigr)
=-V\mathcal PV,\quad
vq_0^\dagger v^*
=-V\mathsf PV,
\label{170828}
\end{align}
and then it follows that 
\begin{align*}
G_1
&
=
\bigl[I-G_{0,0}\bigl(I+V\mathsf PVG_{0,2}\bigr)\bigl(I-vm_0^\dagger  v^*G_{0,1}\bigr)vM_0^\dagger v^*\bigr]
\\&\phantom{{}={}}{}
\cdot\bigl(G_{0,1}-G_{0,1}vm_0^\dagger  v^*G_{0,1}\bigr)
\bigl[I-vM_0^\dagger v^*\bigl(I- G_{0,1}vm_0^\dagger v^*\bigr) \bigl(I+G_{0,2}V\mathsf PV\bigr)G_{0,0}\bigr]
\\&\phantom{{}={}}{}
-\bigl(\mathcal PVG_{0,2}+\mathsf PVG_{0,3}\bigr)vm_0^\dagger v^*\bigl(G_{0,2}V\mathcal P+G_{0,3}V\mathsf P\bigr)
\\&\phantom{{}={}}{}
+\mathcal PVG_{0,3}V\mathcal P
+\mathsf PVG_{0,5}V\mathsf P
+\mathsf PVG_{0,4} V\mathcal P
+\mathcal PVG_{0,4}V\mathsf P
\\&\phantom{{}={}}{}
-\bigl(\mathcal PVG_{0,2}+\mathsf PVG_{0,3}\bigr)\bigl(I-vm_0^\dagger  v^*G_{0,1}\bigr)
\\&\phantom{{}={}-{}}{}
\cdot\bigl[I-vM_0^\dagger v^*\bigl(I-G_{0,1}vm_0^\dagger v^*\bigr)\bigl(I-G_{0,2}vq_0^{\dagger}v^*\bigr)G_{0,0}\bigr]
\\&\phantom{{}={}}{}
-\bigl[I-G_{0,0}\bigl(I-vq_0^{\dagger}v^*G_{0,2}\bigr)\bigl(I-vm_0^\dagger v^*G_{0,1} \bigr)vM_0^\dagger v^*\bigr]
\\&\phantom{{}={}-{}}{}
\cdot\bigl(I-G_{0,1}vm_0^\dagger v^* \bigr)\bigl(G_{0,2}V\mathcal P+G_{0,3}V\mathsf P\bigr)
\\&
=
(I-\mathsf P)\bigl[I-G_{0,0}\bigl(I-vm_0^\dagger  v^*G_{0,1}\bigr)vM_0^\dagger v^*\bigr]
\\&\phantom{{}={}}{}
\cdot\bigl(G_{0,1}-G_{0,1}vm_0^\dagger  v^*G_{0,1}\bigr)
\bigl[I-vM_0^\dagger v^*\bigl(I- G_{0,1}vm_0^\dagger v^*\bigr) G_{0,0}\bigr](I-\mathsf P)
\\&\phantom{{}={}}{}
+\mathcal PVG_{0,3}V\mathcal P
-\mathcal PVG_{0,2}vm_0^\dagger v^*G_{0,2}V\mathcal P
\\&\phantom{{}={}}{}
-(I-\mathsf P)\bigl[I-G_{0,0}\bigl(I-vm_0^\dagger v^*G_{0,1} \bigr)vM_0^\dagger v^*\bigr]\bigl(I-G_{0,1}vm_0^\dagger v^* \bigr)G_{0,2}V\mathcal P
\\&\phantom{{}={}}{}
-\mathcal PVG_{0,2}\bigl(I-vm_0^\dagger  v^*G_{0,1}\bigr)
\bigl[I-vM_0^\dagger v^*\bigl(I-G_{0,1}vm_0^\dagger v^*\bigr)G_{0,0}\bigr](I-\mathsf P)
.
\end{align*}

We finally set 
\begin{align*}
T
&=
(I-\mathsf P)\bigl[I-G_{0,0}\bigl(I-vm_0^\dagger  v^*G_{0,1}\bigr)vM_0^\dagger v^*\bigr]
\\&\phantom{{}={}}{}
\cdot\bigl(G_{0,1}-G_{0,1}vm_0^\dagger  v^*G_{0,1}\bigr)
\bigl[I-vM_0^\dagger v^*\bigl(I- G_{0,1}vm_0^\dagger v^*\bigr) G_{0,0}\bigr](I-\mathsf P).
\end{align*}
For that we basically follows the lines of 
the proof of Theorem~\ref{17082015}.
Take an orthonormal resonance basis $\{\Psi_\gamma\}\subset\mathcal E$, 
and let $\mathbf n_\gamma$ and $\mathbf n_{\widetilde\gamma}$ be defined 
as in \eqref{1708202319}.
Then, due to \eqref{170828}, we have
\begin{align*}
G_{0,1}-G_{0,1}vm_0^\dagger  v^*G_{0,1}
=-\sum_{\alpha=1}^n|\mathbf n^{(\alpha)}\rangle \langle\mathbf n^{(\alpha)}|
+\sum_{\gamma}|\mathbf n_\gamma\rangle \langle \mathbf n_\gamma|
=-\sum_{\widetilde\gamma}|\mathbf n_{\widetilde\gamma}\rangle \langle \mathbf n_{\widetilde\gamma}|
,
\end{align*}
so that we can write 
\begin{align*}
T
=-\sum_{\widetilde\gamma}|\Psi_{\widetilde\gamma}\rangle \langle \Psi_{\widetilde\gamma}|
\end{align*}
with
\begin{align*}
\Psi_{\widetilde\gamma}
=
(I-\mathsf P)\bigl[I-G_{0,0}\bigl(I-vm_0^\dagger  v^*G_{0,1}\bigr)vM_0^\dagger v^*\bigr]\mathbf n_{\widetilde\gamma}.
\end{align*}
Since 
\begin{align*}
Qv^*\bigl(G_{0,1}-G_{0,1}vm_0^\dagger  v^*G_{0,1}\bigr)
&=
Q\bigl(I-v^*G_{0,1}vm_0^\dagger \bigr)v^*G_{0,1}
\\&
=\bigl(Q-(Q-S)\bigr)v^*G_{0,1}
\\&
=0,
\end{align*}
we have $Qv^*\mathbf n_{\widetilde\gamma}=0$.
We also note that $HG_{0,0}vm_0^\dagger =0$.
Then it follows that 
\begin{align*}
H\Psi_{\widetilde\gamma}
=vUQv^*\mathbf n_{\widetilde\gamma}
=0,
\end{align*}
and this implies that $\Psi_{\widetilde\gamma}\in \widetilde{\mathcal E}$.
On the other hand, by Lemma~\ref{12.11.24.18.24} we have 
\begin{align*}
\Psi_{\widetilde\gamma}
-
\mathbf n_{\widetilde\gamma}
\in \mathbb C\mathbf 1\oplus \mathcal L^{\beta-2}.
\end{align*}
The basis $\{\Psi_{\widetilde\gamma}\}\subset \widetilde{\mathcal E}$ is obviously orthogonal to $\mathsf E$. 
Hence  $\{\Psi_{\widetilde\gamma}\}\subset \widetilde{\mathcal E}$ 
forms an orthonormal non-resonance basis. We are done.
\end{proof}

\section{Modification of \cite[Lemma~4.16]{IJ1}}\label{17081223}

In this appendix we present a modified version of \cite[Lemma~4.16]{IJ1},
which was needed in the proof of Lemma~\ref{lemma24}. 
Here we discuss only on $\mathbb Z$, not on the graph $G$,
and the notation is independent of the other parts of the paper.
We rather follow the convention of \cite{IJ1}.

We define $\mathbf 1\colon\mathbb Z\to\mathbb C$,
$\mathbf n\colon\mathbb Z\to\mathbb C$ 
and $\mathbf n^2\colon\mathbb Z\to\mathbb C$ by 
\begin{align*}
\mathbf 1[m]=1,\quad
\mathbf n[m]=m
\quad\text{and}\quad  
\mathbf n^2[m]=m^2
\end{align*}
for $m\in\mathbb Z$, respectively.
We also define the operators $G_0^0$ and $G_2^0$ by the operator kernels
\begin{align*}
G_0^0[n,m]=-\tfrac12|n-m|
\quad\text{and}\quad
G_2^0[n,m]=-\tfrac1{12}|n-m|^3+\tfrac1{12}|n-m|,
\end{align*}
respectively.

\begin{lemma}\label{12.12.27.15.3}
Let $u_1,u_2\in \ell^{1,4}(\mathbb Z)$ satisfy
\begin{align}
\langle \mathbf 1, u_1\rangle=\langle \mathbf n,u_1\rangle 
=\langle \mathbf n^2,u_1\rangle =\langle\mathbf 1,u_2\rangle=0.
\label{12.12.27.7.13}
\end{align}
Then $G_0^0u_1\in \ell^{1,2}(\mathbb Z)$, $G_0^0u_2\in\mathbb C\mathbf 1\oplus\ell^{1,2}(\mathbb Z)$, and
\begin{align}
\langle u_2,G_2^0u_1\rangle =-\langle G_0^0u_2,G^0_0u_1\rangle.
\label{13.3.8.14.53}
\end{align}
\end{lemma}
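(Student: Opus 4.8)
The plan is to reduce the identity \eqref{13.3.8.14.53} to a summation-by-parts computation on $\mathbb Z$, carefully keeping track of boundary contributions that arise because $G_0^0u_1$ and $G_0^0u_2$ need not decay. First I would record the elementary difference-operator identities satisfied by the kernels: writing $-\Delta$ for the free Laplacian on $\mathbb Z$, one has $(-\Delta)G_0^0=I$ and $(-\Delta)G_2^0=-G_0^0$ as operators on suitable spaces, exactly the analogue of \eqref{18082023}. Combined with $(-\Delta)\mathbf 1=0$, $(-\Delta)\mathbf n=0$, $(-\Delta)\mathbf n^2=-2\cdot\mathbf 1$ (up to the sign convention of the paper), these will be the only structural facts needed. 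The moment conditions \eqref{12.12.27.7.13} on $u_1$ — vanishing of the zeroth, first and second moments — are precisely what is required to guarantee that $G_0^0u_1$ decays like $\ell^{1,2}$ rather than merely being bounded; this is the content of the first assertion $G_0^0u_1\in\ell^{1,2}(\mathbb Z)$, and it follows from the explicit kernel $-\tfrac12|n-m|$ by splitting the sum at $n$ and Taylor-expanding $|n-m|$ in $m$, the three leading terms cancelling by \eqref{12.12.27.7.13}. The weaker condition $\langle\mathbf 1,u_2\rangle=0$ only forces $G_0^0u_2\in\mathbb C\mathbf 1\oplus\ell^{1,2}(\mathbb Z)$, i.e. $G_0^0u_2$ approaches (possibly different) constants at $\pm\infty$ but the non-constant part decays.

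Next I would establish \eqref{13.3.8.14.53} itself. Set $w_1=G_0^0u_1\in\ell^{1,2}$ and $w_2=G_0^0u_2=c+\widetilde w_2$ with $c\in\mathbb C\mathbf 1$ (allowing the two one-sided constants to differ is harmless once one works with the truncation argument below) and $\widetilde w_2\in\ell^{1,2}$. Then
\begin{align*}
\langle u_2,G_2^0u_1\rangle
=\langle (-\Delta)w_2,\,G_2^0u_1\rangle
=\langle w_2,\,(-\Delta)G_2^0u_1\rangle
=-\langle w_2,\,G_0^0u_1\rangle
=-\langle G_0^0u_2,\,G_0^0u_1\rangle,
\end{align*}
provided the middle summation by parts is justified. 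Because $w_1=G_0^0u_1$ decays fast enough that $\langle c,w_1\rangle=\langle c,G_0^0u_1\rangle=-\tfrac12 c\sum_{n,m}|n-m|u_1[m]$ is absolutely convergent and in fact vanishes (again by the moment conditions on $u_1$, since $\sum_m|n-m|u_1[m]=O(n^{-2})$ and summing over $n$ against a constant still converges to $0$ using $\langle\mathbf 1,u_1\rangle=\langle\mathbf n,u_1\rangle=\langle\mathbf n^2,u_1\rangle=0$), the constant part of $w_2$ contributes nothing, and the remaining pairing with $\widetilde w_2\in\ell^{1,2}$ is a genuine $\ell^1$–$\ell^\infty$ pairing to which Abel summation applies with no boundary term.

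The discrete summation by parts is where the care is needed: the naive identity $\langle(-\Delta)f,g\rangle=\langle f,(-\Delta)g\rangle$ picks up boundary terms of the form $\lim_{N\to\infty}\bigl(f[N](g[N]-g[N-1])-g[N](f[N]-f[N-1])\bigr)$ (and similarly at $-\infty$). Here $f=u_2$ is in $\ell^{1,4}$ so it decays; the issue is rather that $g=G_2^0u_1$ grows cubically, so one cannot simply move $-\Delta$ onto $u_2$ directly — instead one first writes $u_2=(-\Delta)w_2$, moves $-\Delta$ onto $G_2^0u_1$ where it produces $-G_0^0u_1$, and only then the product $w_2\cdot G_0^0u_1$ is summable with vanishing boundary terms because $G_0^0u_1$ decays like $n^{-2}$ while $w_2$ is at worst bounded. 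I would carry this out via a finite truncation $[-N,N]$, write out the two boundary terms explicitly using the piecewise-linear/cubic kernels, bound them by the tails of $u_1$ weighted by $n^4$ (the $\ell^{1,4}$ hypothesis on $u_1$ is exactly calibrated for the cubic kernel $G_2^0$), and let $N\to\infty$.

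The main obstacle I expect is precisely the bookkeeping of these boundary contributions — one must check that \emph{every} term generated by the cubic growth of $G_2^0$ and the linear growth of $G_0^0$ is killed by one of the four moment conditions in \eqref{12.12.27.7.13}, and that the split of $G_0^0u_2$ into its constant and decaying parts is compatible with the telescoping. This is the same delicacy that, as the remark after Lemma~\ref{lemma24} notes, was handled slightly incorrectly in \cite{IJ1} under a stronger hypothesis; the point of the present lemma is that three moments on $u_1$ plus one on $u_2$ suffice, and the proof must verify that no further moment condition is secretly needed. Once the summation by parts is justified, the identity \eqref{13.3.8.14.53} is immediate from the displayed chain above, and the two mapping statements $G_0^0u_1\in\ell^{1,2}(\mathbb Z)$, $G_0^0u_2\in\mathbb C\mathbf 1\oplus\ell^{1,2}(\mathbb Z)$ are exactly the one-dimensional versions of Lemma~\ref{12.11.24.18.24} applied on $\mathbb Z$ instead of $L_\alpha$.
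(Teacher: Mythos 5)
Your proposal is correct in substance, but it takes a genuinely different route from the paper. The paper proves this lemma on the Fourier side: it runs the proof of \cite[Lemma~4.16]{IJ1} on the torus, where the moment conditions \eqref{12.12.27.7.13} become the vanishing orders $\hat u_1(0)=\hat u_1'(0)=\hat u_1''(0)=\hat u_2(0)=0$, and these exactly tame the fourth-order singularity of the symbol $-(2-2\cos\theta)^{-2}$ of $G_2^0$ at $\theta=0$, so that \eqref{13.3.8.14.53} reduces to the algebraic identity between the symbols of $G_2^0$ and $(G_0^0)^2$ under an integrable integrand. You instead stay in position space: writing $u_2=(-\Delta)G_0^0u_2$ and summing by parts on $[-N,N]$, with the three moments of $u_1$ killing the cubic/quadratic/linear growth of $G_2^0u_1$ (so that both $G_2^0u_1$ and its discrete derivative have limits, the derivative tending to $0$) and the single moment of $u_2$ making $G_0^0u_2$ bounded, so that all boundary terms vanish as $N\to\infty$. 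I checked the key quantitative claims: $(G_0^0u_1)[n]=-\sum_{m>n}(m-n)u_1[m]$ is $o(n^{-3})$ under $u_1\in\ell^{1,4}$ with two vanishing moments, $\sum_n(G_0^0u_1)[n]=-\tfrac12\langle\mathbf n^2+\mathbf n,u_1\rangle=0$, and the boundary term $w_2[N]\bigl((G_2^0u_1)[N+1]-(G_2^0u_1)[N]\bigr)$ tends to $-\tfrac14 c_\pm\langle\mathbf n^2,u_1\rangle$ without the third moment condition, confirming your observation that this is precisely where $\langle\mathbf n^2,u_1\rangle=0$ is indispensable. Your approach is more elementary and self-contained (no Fourier transform, hence it transfers directly to the graph setting of the main text, where Lemma~\ref{lemma24} is anyway proved componentwise), at the cost of the boundary-term bookkeeping you describe; the paper's Fourier route makes the identity \eqref{13.3.8.14.53} essentially automatic once integrability at $\theta=0$ is secured, but is tied to the translation-invariant setting of $\mathbb Z$. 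One small point in your favour: you correctly noticed that under only $\langle\mathbf 1,u_2\rangle=0$ the function $G_0^0u_2$ tends to the two possibly different constants $\pm\tfrac12\langle\mathbf n,u_2\rangle$ at $\pm\infty$, and your truncation argument accommodates this; the pairing $\langle G_0^0u_2,G_0^0u_1\rangle$ is in any case an absolutely convergent $\ell^\infty$--$\ell^1$ pairing, so the identity is unaffected.
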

\begin{proof}
We can prove the assertion along the proof of \cite[Lemma~4.16]{IJ1}.
Instead of \cite[(4.37)]{IJ1}, we can make use of the values
\begin{align*}
\hat u_1(0)=\hat u_1'(0)
=\hat u_1''(0)=\hat u_2(0)=0.
\end{align*}
Then modifications of the proof are not difficult.
We omit the detail.
\end{proof}

\bigskip
\noindent
\subsubsection*{Acknowledgements} 
KI was supported by JSPS KAKENHI Grant Number 17K05325.
The authors were partially supported by the Danish Council for Independent Research $|$ Natural Sciences, Grant DFF--4181-00042. 
The authors would like to thank the referee for helpful remarks on the manuscript.


\begin{thebibliography}{CJN}

\bibitem[BK]{BK} G. Berkolaiko and P. Kuchment, Introduction to quantum graphs.
Mathematical Surveys and Monographs, 186. American Mathematical Society, Providence, RI, 2013. 

\bibitem[BH]{BH} A. E. Brouwer and W. H. Haemers, Spectra of graphs. Universitext. Springer, New York, 2012.

\bibitem[CdV]{CdV} Y. Colin de Verdi\`{e}re, Spectres de graphes. Cours Sp\'{e}cialis\'{e}s 4. Soci\'{e}t\'{e} Math\'{e}matique de France, Paris, 1998.

\bibitem[CJN]{CJN} 
H. Cornean, A. Jensen, and G. Nenciu, 
Memory effects in non-interacting mesoscopic transport.  
Ann.\ H. Poincar\'{e}, {\bf 15}, (2014), 1919--1943.


\bibitem[ES]{ES}
P.~Exner and P.~\v{S}eba, 
Free quantum motion on a branching graph.
Rep.\ Math.\ Phys.\ {\bf 28} (1989), no.\ 1, 7--26.


\bibitem[FI]{FI}
G.~Freiling and M.~Ignatyev,
Spectral analysis for the Sturm-Liouville operator on sun-type graphs.
Inverse Problems {\bf 27} (2011), no.\ 9, 095003, 17 pp. 


\bibitem[GP1]{Ge}
N.I.~Gerasimenko and B.S.~Pavlov,
The inverse scattering problem on a noncompact graph. 
Teoret.\ Mat.\ Fiz.\ {\bf 75} (1988), no.\ 2, 187--200; 
translation in Theoret.\ and Math.\ Phys.\ {\bf 75} (1988), no.\ 2, 460--470.

\bibitem[GP2]{GP}  
N.I.~Gerasimenko and B.S.~Pavlov,
A scattering problem on noncompact graphs. 
Teoret.\ Mat.\ Fiz.\ {\bf 74} (1988), no.\ 3, 345--359; 
translation in Theoret.\ and Math.\ Phys.\ {\bf 74} (1988), no.\ 3, 230--240.

\bibitem[Go]{Go}
L.~Golinskii, 
Spectra of infinite graphs with tails. 
Linear Multilinear Algebra {\bf 64} (2016), no.\ 11, 2270--2296. 

\bibitem[IJ1]{IJ1}
K.~Ito and A.~Jensen, 
A complete classification of threshold properties for one-dimensional discrete Schr\"{o}dinger operators. 
Rev.\ Math.\ Phys.\ {\bf 27} (2015), no.\ 1, 1550002, 45 pp. 


\bibitem[IJ2]{IJ2}
K. Ito and A. Jensen, 
Resolvent expansions for the Schr\"{o}dinger operator on the discrete half-line. 
J. Math.\ Phys.\ {\bf 58} (2017), no. 5, 052101, 24 pp.


\bibitem[JK]{JK}
A.~Jensen and T.~Kato,
Spectral properties of Schr\"{o}dinger operators and time-decay of the wave functions. 
Duke Math.\ J. \textbf{46} (1979), 583--611.

\bibitem[JN1]{JN} 
A. Jensen and G. Nenciu, 
A unified approach to resolvent expansions at thresholds. 
Rev.\ Math.\ Phys., {\bf 13} (2001), 717--754.

\bibitem[JN2]{JNerr} 
A. Jensen and G. Nenciu, 
Erratum: ``A unified approach to resolvent expansions at thresholds'' [Rev. Math. Phys. 13(6), 717--754 (2001)]. 
Rev. Math. Phys. 16(5), 675--677 (2004).


\bibitem[LN]{LN}
V.O.~Lebid' and L.O.~Nyzhnyk,
Spectral analysis of some graphs with infinite rays.
Translation of Ukrain.\ Mat.\ Zh.\ {\bf 66} (2014), no.\ 9, 1193--1204. 

\bibitem[MT]{MT}
K.~Mochizuki and I.~Trooshin,
Spectral problems and scattering on noncompact star-shaped graphs containing finite rays. 
J. Inverse Ill-Posed Probl.\ {\bf 23} (2015), no.\ 1, 23--40.


\end{thebibliography}
\end{document}